\journalname{Computational Optimization and Applications}
\def\datavname{Data availability statement}%
\def\datav{\par\addvspace{17pt}\small\rmfamily
	\trivlist\if!\datavname!\item[]\else
	\item[\hskip\labelsep
	{\bfseries\datavname}]\fi}
\newenvironment{dataavailability}{\begin{datav}}
	{\end{datav}}
\def\coiname{Conflict of interest}%
\def\coi{\par\addvspace{17pt}\small\rmfamily
	\trivlist\if!\coiname!\item[]\else
	\item[\hskip\labelsep
	{\bfseries\coiname}]\fi}
\newenvironment{conflict}{\begin{coi}}
	{\end{coi}}
\let\F\mathds\let\C\mathcal\newcommand{\R}{\F{R}}\newcommand{\A}{\tens{A}}
\newcommand{\norm}[1]{{\left\lVert #1 \right\rVert}}
\newcommand{\Norm}[1]{{\left\vert\kern-0.25ex\left\vert\kern-0.25ex\left\vert #1 \right\vert\kern-0.25ex\right\vert\kern-0.25ex\right\vert}}
\newcommand{\IP}[2]{\left\langle #1,#2 \right\rangle}
\newcommand{\ip}[2]{#1 \vcenter{\hbox{\resizebox{6pt}{!}{\ensuremath\cdot}}} #2}
\newcommand{\op}[1]{\operatorname{#1}}
\newcommand{\splitln}[4]{\begin{cases} #1 & #2 \\ #3 & #4\end{cases}}
\newcommand{\1}{\F{1}}
\DeclareMathOperator{\st}{\;s.t.\;}
\renewcommand{\epsilon}{\varepsilon}\renewcommand{\bar}{\overline}
\renewcommand{\hat}{\widehat}\renewcommand{\tilde}{\widetilde}
\DeclareMathOperator*{\argmin}{argmin}
\DeclareMathOperator{\supp}{supp}
\DeclareMathOperator{\sign}{sign}
\newcommand{\diff}{\mathop{}\!\mathrm{d}}
\newcommand{\Emin}[1][\var0]{\inf_{#1\in\F H}\op{E}(#1)}
\newcommand{\aU}{a_{\op{U}}}\newcommand{\aE}{a_{\op{E}}}
\newcommand{\aUaE}{$(\aU,\aE)$-minimising sequence of $\op{E}$}
\newcounter{adaptStepCounter}
\newenvironment{adaptiveStep}{\refstepcounter{adaptStepCounter}}{}
\newcommand{\data}{\eta}
\newcommand{\Domain}{\Omega}\newcommand{\domain}{\omega}
\newcommand{\meshsize}{h}
\newcommand*{\var}[1]{%
	\IfEqCase{#1}{%
		{0}{u}%
		{1}{v}%
		{2}{w}%
	}[u#1]%
}
\newcommand*{\vars}[1]{%
	\IfEqCase{#1}{%
		{0}{\alpha}%
		{1}{\beta}%
		{2}{\gamma}%
		{3}{a}%
		{4}{b}%
		{5}{c}%
	}[k#1]%
}
\newcommand*{\vvars}[1]{\vec{\vars{#1}}}
\definecolor{darkgreen}{rgb}{0.0, 0.5, 0.0}
\newcommand{\edit}[3][1]{%
	\IfEq{#1}{2}{\def\mysecondvar{}}{\def\mysecondvar{#2}}
	\ifx\mysecondvar\empty{}\else{%
		\IfEq{#1}{1}{
			\ifmmode%
			\text{\color{red}\sout{\ensuremath{#2}}}%
			\else%
			{\color{red} \sout{#2}}
			\fi%
		}{\color{red}#2}\ %
	}\fi{\color{darkgreen}#3}}
\renewcommand{\edit}[3][1]{#3}
\author{Antonin Chambolle \and Robert Tovey }
\institute{A. Chambolle \at
	CEREMADE, CNRS \& Université Paris Dauphine, PSL Research University, Paris \\
	\email{chambolle@ceremade.dauphine.fr}, ORCID: 0000-0002-9465-4659          
	\and
	R. Tovey \at
	MOKAPLAN, INRIA Paris, Paris \\
	\email{robert.tovey@inria.fr}, ORCID: 0000-0001-5411-2268
}
\title{``FISTA'' in Banach spaces with adaptive discretisations}
\begin{document}
	\maketitle
	
	\begin{abstract}
		FISTA is a popular convex optimisation algorithm which is known to converge at an optimal rate whenever a minimiser is contained in a suitable Hilbert space. We propose a modified algorithm where each iteration is performed in a subset which is allowed to change at every iteration. Sufficient conditions are provided for guaranteed convergence, although at a reduced rate depending on the conditioning of the specific problem. These conditions have a natural interpretation when a minimiser exists in an underlying Banach space. Typical examples are L1-penalised reconstructions where we provide detailed theoretical and numerical analysis.
	\end{abstract}
	\keywords{Convex optimization \and Multiscale \and Multigrid \and Sparsity \and Lasso}
	
	\section{Introduction}
	The Fast Iterative Shrinkage-Thresholding Algorithm (FISTA) was proposed by Beck and Teboulle \cite{Beck2009} as an extension of Nesterov's fast gradient method \cite{Nesterov2004} and is now a very popular algorithm for minimising the sum of two convex functions.
	We write this as the problem of computing
	\begin{equation}\label{eq: min F = fD+fC}
		\Emin \qquad\text{such that}\qquad \op{E}(\var0)\coloneqq \op{f}(\var0) + \op{g}(\var0),
	\end{equation}
	for a Hilbert space $\F H$ where $\op{f}\colon\F H\to\R$ is a convex differentiable function with $L$-Lipschitz gradient and $\op{g}\colon\F H\to\bar\R$ is a ``simple'' convex function, whose ``proximity operator'' is easy to compute. Throughout this work we assume that $\op{E}$ is bounded below so that the infimum is finite. The iterates of the FISTA algorithm will be denoted $\var0_n\in\F H$. If, moreover the infimum is achieved, it has been shown that $\op{E}(\var0_n) - \Emin$ converges at the optimum rate of $n^{-2}$ \cite{Beck2009}, and later (after a small modification) the convergence of the iterates was also shown in a general Hilbert space setting \cite{Chambolle2015}. Many further works have gone on to demonstrate faster practical convergence rates for slightly modified variants of FISTA \cite{Tao2016,Liang2017,Alamo2019}.
	
	In this work we address the case where the minimiser possibly fails to exist or lies in a larger space where $\F H$ is dense. There is much overlap between the techniques used in this work and those used in the literature of inexact optimisation, however, our interpretation is relatively novel. In particular, we emphasise the infinite-dimensional setting where errors come from ``discretisation'', rather than random or decaying errors in $\F H$, which enables two new perspectives:
	\begin{itemize}
		\item Analytically, we prove new rates of convergence for FISTA when the minimum energy is not achieved (at least not in $\F H$). The exact rate can be computed by quantifying coercivity and regularity properties of $\op{E}$. If there isn't a minimiser in $\F H$, then this rate is strictly slower than $n^{-2}$.
		
		\item Numerically, we allow the optimisation domain to change on every iteration. This enables us to understand how FISTA behaves with adaptive discretisations. Adaptive finite-element methods are known to improve the efficiency of, for example, approximating the solutions of PDEs. Our analytical results show how to combine such tools with FISTA without reducing the guaranteed rate of convergence, and our numerical results confirm much improved time and computer memory efficiency in the Lasso example (Section~\ref{sec: Lasso definition}).
	\end{itemize}
	All the examples in this work, discussed from Section~\ref{sec: general examples} onward, consider $\{\var0\in\F H\st \op{E}(\var0)<\infty\}$ to be contained in some ambient Banach space $\F{U}$. The idea is that FISTA provides a minimising sequence in $\F H\cap\F U$, but further properties like rate of convergence (of $\op{E}$ or the iterates) must come from the topology of $\F{U}$. It will not be necessary for $\F H\hookrightarrow\F U$ to be a continuous embedding, nor in fact the full inclusion $\F H\subset\F U$.
	
	Some other works for FISTA-like algorithms include \cite{Jiang2012,Villa2013}. Of particular note, our stability estimate for FISTA  in Theorem~\ref{thm: mini FISTA convergence} is very similar to \cite[Prop 2]{Schmidt2011} and \cite[Prop 3.3]{Aujol2015}. This is then used to analyse the convergence properties in our more general Banach space setting, but where all sources of inexactness come from subspace approximations. The ideas in \cite{Parpas2017} are similar although in application to the proximal gradient method with an additional smoothing on the functional $\op{g}$. The permitted refinement steps are also more broad in our work. Very recent work in \cite{Yu2021} proposes a ``Multilevel FISTA'' algorithm which allows similar coarse-to-fine refinement strategies, although only a finite number. We also allow for non-uniform refinement with a posteriori strategies.

	\subsection{Outline}
	This work is organised as follows. Section~\ref{sec: prelims} defines notation and the generic form of our proposed refining FISTA algorithm, Algorithm~\ref{alg: refining FISTA}. The main theoretical contribution of this work is the convergence analysis of Algorithm~\ref{alg: refining FISTA} which is split into two parts: first we outline the proof structure in Section~\ref{sec: recipe}, then we state the specific results in the case of FISTA in Section~\ref{sec: FISTA convergence}. The main results are Theorems~\ref{thm: exponential FISTA convergence}/\ref{thm: stronger exponential FISTA convergence} which extend the convergence of FISTA to cases with un-attained minima with uniform/adaptively chosen subspaces $\F{U}^n$ respectively. 
	
	Section~\ref{sec: general examples} presents some general results for the application of Algorithm~\ref{alg: refining FISTA} in Banach spaces and Section~\ref{sec: Lasso definition} gives a much more detailed discussion of adaptive refinement for Lasso minimisation. In particular, we describe how to choose efficient refining discretisations to approximate $\Emin$, estimate the convergence of $\op{E}$, and identify the support of the minimiser. The numerical results in Section~\ref{sec: numerics} demonstrate these techniques in four different models demonstrating the apparent sharpness of our convergence rates and the computational efficiency of adaptive discretisations.

	\section{Definitions and notation}\label{sec: prelims}
	We consider optimisation of \eqref{eq: min F = fD+fC} over a Hilbert space $(\F H,\IP\cdot\cdot,\norm\cdot)$. In the more analytical section (Sections~\ref{sec: recipe} and \ref{sec: FISTA convergence}) it will be more convenient to use the translated energy
	\begin{equation}
		\op{E}_0\colon\F H\to \R,\qquad \op{E}_0(\var0) \coloneqq \op{E}(\var0)-\Emin[\tilde{\var0}]
	\end{equation}
	so that $\inf_{\var0\in\F{H}}\op{E}_0(\var0) = 0$, although access to this function is not assumed for numerical examples.

	The proposed generalised FISTA algorithm is stated in Algorithm~\ref{alg: refining FISTA} for an arbitrary choice of closed convex subsets $\F{U}^n\subset \F H$ for $n\in\F N$. The only difference from standard FISTA is that on iteration $n$, all computations are performed in the subset $\F{U}^n$. If $\F{U}^n=\F H$, then we recover the original algorithm. More generally, the idea is that $\F{U}^n$ are ``growing'', for example $\F{U}^n\subset\F{U}^{n+1}$, but this assumption is not necessary in most of the results.
	
	Without loss of generality we will assume $L=1$, i.e. $\nabla\op{f}$ is 1-Lipschitz. To get the general statement of any of the results which follow, replace $\op{E}$ with $\frac{\op{E}}{L}$. In particular,
	\begin{equation}
		\norm{\nabla\op{f}(\var0)- \nabla\op{f}(\var1)} \leq \norm{\var0-\var1}
	\end{equation}
	for all $\var0,\var1\in\F H$ and $\op{g}$ is called ``simple'' if it is proper, convex, weakly lower-semicontinuous, and
	\begin{equation}
		\argmin_{\var0\in\tilde{\F{U}}}\tfrac12\norm{\var0-\var1}^2 + \op{g}(\var0)
	\end{equation}
	is exactly computable for all $\var1\in\F H$ and all $\tilde{\F{U}}\in\{\F{U}^n\}_{n=0}^\infty$. Closed subsets of $\F H$ are locally weakly compact, therefore this argmin is always non-empty.
	
	One defining property of the FISTA algorithm is an appropriate choice of inertia, dictated by $t_n$. In particular, we will say that $(t_n)_{n=0}^\infty$ is a \emph{FISTA stepsize} if
	\begin{equation}\label{def: t_n and rho_n}
		t_0 = 1, \qquad t_n \geq 1, \qquad \text{and}\qquad \rho_n \coloneqq t_n^2 - t_{n+1}^2 + t_{n+1} \geq 0\qquad \text{ for all }n=0,1,\ldots.
	\end{equation}
	
	The precise constants associated to a given rate are given in the statements of the theorems but, for convenience, are otherwise omitted from the text. For sequences $(a_n)_{n=0}^\infty$,$(b_n)_{n=0}^\infty$ we will use the notation:
	\begin{align*}
		a_n \lesssim b_n \qquad &\iff \qquad \exists C,N>0 \st a_n \leq C b_n \text{ for all } n>N,
		\\ a_n \simeq b_n \qquad &\iff \qquad a_n\lesssim b_n \lesssim a_n.
	\end{align*}
	For $n\in\F N$ we use the abbreviation $[n] = \{1,2,\ldots,n\}$. When the subdifferential of $\op{E}$ is set-valued, we will use the short-hand
	\begin{equation}
		\Norm{\partial\op{E}(\var0)} \coloneqq \inf_{\var1\in\partial\op{E}(\var0)}\Norm{\var1}
	\end{equation}
	for any specified norm $\Norm\cdot$.
	
	\begin{algorithm}\caption{Refining subset FISTA}\label{alg: refining FISTA}
		\centering
		\begin{algorithmic}[1]
			\State Choose $(\F{U}^n)_{n\in\F N}$, $\var0_0\in \F{U}^0$ and some FISTA stepsize choice $(t_n)_{n\in\F N}$
			\State $\var1_0\gets \var0_0, n\gets0$
			\Repeat
			\State $\bar{\var0}_n \gets (1-\tfrac{1}{t_n})\var0_n + \tfrac{1}{t_n}\var1_n$
			\State $\displaystyle \var0_{n+1} \gets \argmin_{\var0\in \F{U}^{n+1}} \tfrac12\norm{\var0-\bar{\var0}_n+\nabla \op{f}(\bar{\var0}_n)}^2+\op{g}(\var0)$ \Comment{Only modification, $\F{U}^{n+1}\subset \F{U}$}
			\State $\var1_{n+1} \gets (1-t_n)\var0_n + t_n\var0_{n+1}$
			\State $n\gets n+1$
			\Until{some stopping criterion is met}
		\end{algorithmic}
	\end{algorithm}

	\section{General proof recipe}\label{sec: recipe}
	In this section we give an intuitive outline of the full proof for convergence of Algorithm~\ref{alg: refining FISTA} before giving formal theorems and proofs in the next section. First we recall the classical FISTA convergence guarantee given by \cite[Thm 3.1]{Chambolle2015}; if there exists $\var0^*\in \argmin_{\var0\in\F H}\op{E}_0(\var0)$, then
	\begin{equation}\label{eq: classical FISTA}
		t_N^2\op{E}_0(\var0_N) + \sum^{N-1}_{n=1}\rho_n\op{E}_0(\var0_n) + \tfrac12\norm{\var1_N-\var0^*}^2 \leq \tfrac12\norm{\var0_0-\var0^*}^2
	\end{equation}
	for any FISTA stepsize choice $t_N\simeq N$ such that $\rho_n\geq0$.
	
	\paragraph{Step 1: Quantifying the stability}
	\begin{adaptiveStep}\label{step: step 1 stability}
		The first step is to generalise \eqref{eq: classical FISTA} to account for the adapting subsets $\F{U}^n$. In the notation of Algorithm~\ref{alg: refining FISTA}, Theorem~\ref{thm: mini FISTA convergence} shows that 
		\begin{equation}\label{eq: mini FISTA convergence}
			t_N^2\op{E}_0(\var0_N) + \sum_{n=1}^{N-1}\rho_n \op{E}_0(\var0_n) + \tfrac12\norm{\var1_N-\var2_N}^2 \leq \tfrac12\norm{\var0_0-\var2_0}^2 +\tfrac{\norm{\var2_N}^2-\norm{\var2_0}^2}2 
			+ \sum^N_{n=1} t_n\op{E}_0(\var2_n) + \IP{\var1_{n-1}}{\var2_{n-1}-\var2_n}
		\end{equation}
		for any $\var2_n \in \F{U}^n$. The similarities to \eqref{eq: classical FISTA} are clear. If $\F{U}^n=\F H$, then we can choose $\var2_n=\var0^*$ and the two estimates agree. These extra terms in \eqref{eq: mini FISTA convergence} quantify the robustness to changing of discretisation.
	\end{adaptiveStep}

	\paragraph{Step 2: Quantifying the scaling properties}
	\begin{adaptiveStep}\label{step: step 2 smoothness}
		To show that the extra terms in \eqref{eq: mini FISTA convergence} are small, we need to quantify the approximation properties of $\F{U}^n$. The idea is that there is a sequence $\var2_n\in\F{U}^n$, $n\in\F N$ such that $\norm{\var2_n}$ grows slowly and $\op{E}_0(\var2_n)$ decreases quickly. To quantify this balance, we introduce a secondary sequence $n_0 < n_1 < \ldots$ and constants $\aU,\aE\geq1$ such that for each $k\in\F N$
		\begin{equation}\label{eq: first regularity}
			n\leq n_k \implies \norm{\var2_n} \lesssim \aU^k, \qquad n\geq n_k \implies \op{E}_0(\var2_n) \lesssim \aE^{-k}.
		\end{equation}
		A canonical example would be $\F{U}^n=\{\var0\in\F H\st \norm{\var0}\leq \aU^k\}$ for $n\in[n_k,n_{k+1})$, then $\aE$ reflects the smoothness of $\op{E}_0$. The choice of exponential scaling is introduced to improve stability of Algorithm~\ref{alg: refining FISTA}. It is natural if we consider the $\F{U}^n$ to be the subspace of functions discretised on a uniform mesh. If that mesh is sequentially refined, then the resolution of the mesh will be of order $h^k$ after $k$ refinements and for some $h<1$. The integer $n_k$ is then the time at which the mesh has refined $k$ times.  The trade-off between $\aE$ and $\aU$ dictates the final convergence rate of the algorithm. If $\aU>1$, then we cannot guarantee the original $n^{-2}$ rate of convergence.
	\end{adaptiveStep}
	
	\paragraph{Step 3: Generalising the convergence bound}
	\begin{adaptiveStep}\label{step: step 3 convergence bound}
		In this step we combine the FISTA stability estimate with the subset approximation guarantees to provide a sharper estimate of stability with respect to the parameters $\aE$ and $\aU$. For example, if for each $k\in\F N$
		$$\var2_{n}= \var2_{n_k} \text{ for each } n=n_k,n_k+1,\ldots,n_{k+1}-1,$$
		then many terms on the right-hand side of \eqref{eq: mini FISTA convergence} telescope to 0. The result of this is presented in Lemma~\ref{thm: mini exponential FISTA convergence}. The key idea is that the stability error in \eqref{eq: mini FISTA convergence} has $K\ll N$ terms, rather than $N$.
	\end{adaptiveStep}
	
	\paragraph{Step 4: Sufficiently fast growth}
	\begin{adaptiveStep}\label{step: step 4 sufficiently fast}
		In Step~\ref{step: step 3 convergence bound} we develop a convergence bound, now we wish to show that it is only worse than the classical \eqref{eq: classical FISTA} by a constant factor. In particular, it is equivalent to either run Algorithm~\ref{alg: refining FISTA} for $N$ iterations, or the classical FISTA algorithm for $N$ iterations on the fixed subset $\F{U}^N$. The estimate from \eqref{eq: classical FISTA} provides the estimate $N^2\op{E}_0(\var0_N)\lesssim \norm{\var0_0-\var2_N}^2 = O(\aU^{2K})$ for $N\leq n_K$. Lemma~\ref{thm: sufficiently fast} shows that Algorithm~\ref{alg: refining FISTA} can achieve the same order of approximation, so long as $\F{U}^n$ grow sufficiently quickly (in particular $n_k^2\lesssim \aE^k\aU^{2k}$).
	\end{adaptiveStep}
	
	\paragraph{Step 5: Sufficiently slow growth}
	\begin{adaptiveStep}\label{step: step 5 sufficiently slow}
		The result of Step~\ref{step: step 4 sufficiently fast} is sufficient to prove convergence, but not yet a rate. If the subsets grow too quickly, then the influence of $\norm{\var0_n}\to\infty$ will slow the rate of convergence. If $n_k$ is too large, then we overfit to the discrete problem, but if $n_k$ is too small, then FISTA converges slowly. Lemma~\ref{thm: sufficiently slow} balances these two factors in an optimal way ($n_k^2\simeq \aE^k\aU^{2k}$) for Algorithm~\ref{alg: refining FISTA} resulting in a convergence rate of 
		$$ \op{E}_0(\var0_N) \lesssim \frac{\aU^{2K}}{N^2} \lesssim \frac{N^{2\kappa}}{N^2}$$
		for all $N\in\F N$ and $\kappa=\frac{2\log\aU}{\log\aE+2\log\aU}\in[0,1)$. In particular, if the minimum is attained in $\F H$, then we recover the classical rate with $\kappa=0$.
	\end{adaptiveStep}
	
	\paragraph{Step 6: Adaptivity}
	\begin{adaptiveStep}\label{step: step 6 adaptivity}
		Up to this point we have implicitly focused on the case where $\F{U}^n$ (and $n_k$) are chosen a priori. The main challenge for adaptive choice of $\F{U}^n$ is to guarantee \eqref{eq: first regularity} from Step~\ref{step: step 2 smoothness} using a posteriori estimates. Combined with the partial telescoping requirement in Step~\ref{step: step 3 convergence bound}, a natural choice is $\var2_n=\var0_{n_k-1}$ for $n\in[n_k,n_{k+1})$, i.e. the value of $n_k$ is chosen to be $n+1$ once the iterate $\var0_n$ is observed.
		Theorem~\ref{thm: stronger exponential FISTA convergence} shows that a sufficient condition is
		$$\var0_{n_k-1}\in\F{U}^{n_k}\cap\F{U}^{n_k+1}\cap\ldots\cap\F{U}^{n_{k+1}-1}, \qquad \norm{\var0_{n_k-1}} \lesssim \aU^k, \quad\text{and}\quad \op{E}_0(\var0_{n_k-1}) \lesssim \aE^{-k}.$$
		Convergence is most stable if the approximation spaces $\F{U}^n$ satisfy a monotone inclusion, breaking the monotonicity requires more care.
		The only non-trivial property to verify is the energy gap  $\op{E}_0(\var0_n) = \op{E}(\var0_n)-\Emin$. Lemma~\ref{thm: practical refinement criteria} proposes some sufficient conditions to guarantee the same overall rate of convergence as in Step~\ref{step: step 5 sufficiently slow},
		$$ \min_{n\leq N}\op{E}_0(\var0_n) \lesssim \frac{N^{2\kappa}}{N^2}$$
		for all $N\in\F N$, with the same $\kappa\in[0,1)$ from Step~\ref{step: step 5 sufficiently slow}. The penalty for accelerating the change of discretisation is a potential loss of stability or monotonicity in $\op{E}_0(\var0_n)$, although this behaviour has not been seen in numerical experiments.
	\end{adaptiveStep}
	
	\section{Proof of convergence}\label{sec: FISTA convergence}
	In this section we follow the recipe motivated in Section~\ref{sec: recipe} to prove convergence of two variants of Algorithm~\ref{alg: refining FISTA}. Each of the main theorems and lemmas will be stated with a sketch proof in this section. The details of the proofs are either trivial or very technical and are therefore placed in Section~\ref{app: FISTA convergence} to preserve the flow of the argument.
	
	\subsection{Computing the convergence bound}
	For Step~\ref{step: step 1 stability} of Section~\ref{sec: recipe} we look to replicate the classical bound of the form in \eqref{eq: classical FISTA} for Algorithm~\ref{alg: refining FISTA}. The proofs in this step follow the classical arguments \cite{Beck2009,Chambolle2015} very closely. 
	Throughout this section we consider a sequence $(\F{U}^n)_{n\in\F N}$ which generate the iterates $(\var0_n)_{n\in\F N}$ in Algorithm~\ref{alg: refining FISTA} such that
	\begin{equation}\label{eq: refining subspace definition}
		\var0^n\in\F{U}^{n+1}\subset\F H \quad\text{where }\F{U}^n\text{ is a closed, convex subset for all } n\in\F N.
	\end{equation}
	
	\subsubsection{Single iterations}
	We first wish to understand a single iteration of Algorithm~\ref{alg: refining FISTA}. This is done through the following two lemmas.
	\begin{lemma}[equivalent to {\cite[Lemma 3.1]{Chambolle2015}}]\label{thm: descent lemma}
		\Copy{thm: descent lemma}{
			Suppose $\nabla \op{f}$ is 1-Lipschitz, for any $\bar{\var0}\in \F H$ define
			$$ \var0 \coloneqq \argmin_{\var0\in \F{U}^{n}} \tfrac12\norm{\var0-\bar{\var0}+\nabla \op{f}(\bar{\var0})}^2+\op{g}(\var0).$$
			Then, for all $\var2\in\F{U}^n$, we have
			$$\op{E}_0(\var0) + \tfrac12\norm{\var0-\var2}^2 \leq \op{E}_0(\var2) + \tfrac12\norm{\bar{\var0}-\var2}^2.$$
			
		}
	\end{lemma}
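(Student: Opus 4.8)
The plan is to reproduce the classical proximal-gradient descent estimate, the only new ingredient being that the point $\var0$ is defined by a minimisation over the convex subset $\F{U}^n$ instead of over all of $\F H$. Set $z\coloneqq\bar{\var0}-\nabla\op{f}(\bar{\var0})$, so that $\var0$ minimises $\phi(\var1)\coloneqq\tfrac12\norm{\var1-z}^2+\op{g}(\var1)$ over $\F{U}^n$. For an arbitrary $\var2\in\F{U}^n$, convexity of $\F{U}^n$ keeps the whole segment $\var0+t(\var2-\var0)$, $t\in[0,1]$, feasible, so $\phi(\var0)\le\phi(\var0+t(\var2-\var0))$; dividing by $t$ and letting $t\downarrow 0$, the quadratic part contributes $\IP{\var0-z}{\var2-\var0}$ and the right directional derivative of $\op{g}$ at $\var0$ along $\var2-\var0$ is at most the difference quotient $\op{g}(\var2)-\op{g}(\var0)$ by convexity. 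This gives the variational inequality
\begin{equation*}
\op{g}(\var0)\;\le\;\op{g}(\var2)+\IP{\var0-\bar{\var0}+\nabla\op{f}(\bar{\var0})}{\var2-\var0},\qquad \var2\in\F{U}^n,
\end{equation*}
equivalently that $\IP{\var0-z+\xi}{\var2-\var0}\ge 0$ for some $\xi\in\partial\op{g}(\var0)$ and all $\var2\in\F{U}^n$.

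Next I would add to this the two standard inequalities coming from $\op{f}$: the descent inequality $\op{f}(\var0)\le\op{f}(\bar{\var0})+\IP{\nabla\op{f}(\bar{\var0})}{\var0-\bar{\var0}}+\tfrac12\norm{\var0-\bar{\var0}}^2$, valid because $\nabla\op{f}$ is $1$-Lipschitz, and the subgradient inequality $\op{f}(\var2)\ge\op{f}(\bar{\var0})+\IP{\nabla\op{f}(\bar{\var0})}{\var2-\bar{\var0}}$ from convexity of $\op{f}$. Since $\op{E}_0(\var0)-\op{E}_0(\var2)=\op{f}(\var0)-\op{f}(\var2)+\op{g}(\var0)-\op{g}(\var2)$ (the translation constant drops out), summing the three inequalities makes all terms containing $\nabla\op{f}(\bar{\var0})$ cancel, leaving
\begin{equation*}
\op{E}_0(\var0)-\op{E}_0(\var2)\;\le\;\tfrac12\norm{\var0-\bar{\var0}}^2+\IP{\var0-\bar{\var0}}{\var2-\var0}.
\end{equation*}
The polarisation identity $\tfrac12\norm{\var0-\bar{\var0}}^2+\IP{\var0-\bar{\var0}}{\var2-\var0}=\tfrac12\norm{\bar{\var0}-\var2}^2-\tfrac12\norm{\var0-\var2}^2$ then rearranges this into exactly $\op{E}_0(\var0)+\tfrac12\norm{\var0-\var2}^2\le\op{E}_0(\var2)+\tfrac12\norm{\bar{\var0}-\var2}^2$.

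I do not expect a genuine obstacle here; this is the Beck--Teboulle / Chambolle--Dossal computation almost verbatim. The only point that needs a moment's care is that minimising over the closed convex set $\F{U}^n$ rather than over $\F H$ still yields the one-sided variational inequality of the first step: this holds precisely because $\F{U}^n$ is convex, so the comparison point $\var0+t(\var2-\var0)$ remains admissible, and the directional derivative of $\op{g}$ invoked there is finite (it is bounded below by $-\IP{\var0-z}{\var2-\var0}$ thanks to optimality of $\var0$), which legitimises passing to the limit $t\downarrow0$. Writing out that limiting argument carefully is the ``technical'' part that the paper defers to the appendix.
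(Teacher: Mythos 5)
Your argument is correct and is essentially the paper's own proof: the paper simply states that the proof of \cite[Lemma 3.1]{Chambolle2015} carries over verbatim to the subset $\F{U}^n$, and the one point of care you identify --- that convexity of $\F{U}^n$ keeps $\var0+t(\var2-\var0)$ feasible so the variational inequality for the constrained prox still holds --- is exactly the required modification. The remaining steps (descent lemma for $\op{f}$, convexity of $\op{f}$, and the polarisation identity) match the standard computation.
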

	\noindent The proof is exactly the same as in \cite{Chambolle2015} on the subset $\F{U}^n$. Applying Lemma~\ref{thm: descent lemma} to the iterates from Algorithm~\ref{alg: refining FISTA} gives a more explicit inequality.
	
	\begin{lemma}[{\cite[(17)]{Chambolle2015}, \cite[Lemma 4.1]{Beck2009}}]\label{thm: one step FISTA}
		\Copy{thm: one step FISTA}{
			Let $\var2_n\in\F{U}^n$ be chosen arbitrarily and $\var0_n$/$\var1_n$ be generated by Algorithm~\ref{alg: refining FISTA} for all $n\in\F N$.
			For all $n>0$, it holds that}
		\begin{equation}\Copy{thm:eq: one step FISTA}{
				t_{n}^2(\op{E}_0(\var0_{n}) - \op{E}_0(\var2_n)) - (t_{n}^2-t_{n})(\op{E}_0(\var0_{n-1})-\op{E}_0(\var2_n)) \leq \tfrac1{2}\left[\norm{\var1_{n-1}}^2-\norm{\var1_{n}}^2\right] + \IP{\var1_{n}-\var1_{n-1}}{\var2_n}.}\label{eq: one-step FISTA}
		\end{equation}
	\end{lemma}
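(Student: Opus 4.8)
The plan is to reproduce the classical one-step FISTA estimate of \cite{Beck2009,Chambolle2015}, now carried out on the subset $\F{U}^{n}$; the only new ingredient is the choice of comparison vectors. Recall that $\var0_n$ is the iterate of Algorithm~\ref{alg: refining FISTA} obtained as the minimiser over $\F{U}^{n}$ associated with $\bar{\var0}_{n-1}$. I would apply Lemma~\ref{thm: descent lemma} with this $\bar{\var0}=\bar{\var0}_{n-1}$ and this subset \emph{twice}: once with the test point $\var2=\var2_n$ (the arbitrary element of $\F{U}^{n}$ in the statement), and once with the test point $\var2=\var0_{n-1}$. The second choice is admissible precisely because the refining property \eqref{eq: refining subspace definition} gives $\var0_{n-1}\in\F{U}^{n}$, and this is the \emph{only} place where the hypothesis on the changing subsets enters. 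The two instances read
\begin{align*}
	\op{E}_0(\var0_n)+\tfrac12\norm{\var0_n-\var2_n}^2 &\leq \op{E}_0(\var2_n)+\tfrac12\norm{\bar{\var0}_{n-1}-\var2_n}^2,\\
	\op{E}_0(\var0_n)+\tfrac12\norm{\var0_n-\var0_{n-1}}^2 &\leq \op{E}_0(\var0_{n-1})+\tfrac12\norm{\bar{\var0}_{n-1}-\var0_{n-1}}^2.
\end{align*}

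Next I would add the first inequality to $(t_n-1)$ times the second (both weights are $\geq0$ since $t_n\geq1$) and scale the result by $t_n$. On the left this produces $t_n^2\op{E}_0(\var0_n)-(t_n^2-t_n)\op{E}_0(\var0_{n-1})-t_n\op{E}_0(\var2_n)$, which is the left-hand side of \eqref{eq: one-step FISTA} after the elementary identity $t_n^2(a-c)-(t_n^2-t_n)(b-c)=t_n^2a-(t_n^2-t_n)b-t_nc$ applied with $a=\op{E}_0(\var0_n)$, $b=\op{E}_0(\var0_{n-1})$, $c=\op{E}_0(\var2_n)$. It then remains to show that the quadratic remainder
$$\tfrac{t_n}{2}\!\left(\norm{\bar{\var0}_{n-1}-\var2_n}^2-\norm{\var0_n-\var2_n}^2\right)+\tfrac{t_n(t_n-1)}{2}\!\left(\norm{\bar{\var0}_{n-1}-\var0_{n-1}}^2-\norm{\var0_n-\var0_{n-1}}^2\right)$$
is bounded above by $\tfrac12\!\left(\norm{\var1_{n-1}}^2-\norm{\var1_n}^2\right)+\IP{\var1_n-\var1_{n-1}}{\var2_n}$. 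The cleanest route is to exploit that lines~4 and~6 of Algorithm~\ref{alg: refining FISTA} present \emph{both} $\bar{\var0}_{n-1}$ and $\var0_n$ as convex combinations of $\var0_{n-1}$ with $\var1_{n-1}$, respectively $\var1_n$, using the \emph{same} weight $\tfrac1{t_n}$ on the second argument; the three-point identity $\norm{(1-\lambda)a+\lambda b-c}^2=(1-\lambda)\norm{a-c}^2+\lambda\norm{b-c}^2-\lambda(1-\lambda)\norm{a-b}^2$ with $\lambda=\tfrac1{t_n}$ then makes the $\norm{\var0_{n-1}-\var2_n}^2$ contributions cancel outright, and absorbing the surviving $\norm{\var0_{n-1}-\var1_n}^2$ term with the help of a rescaled copy of the second inequality cancels $\norm{\var0_{n-1}-\var1_{n-1}}^2$ as well, so that \eqref{eq: one-step FISTA} falls out directly. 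Equivalently, one can simply expand each bracket via $\norm{a-c}^2-\norm{b-c}^2=\norm a^2-\norm b^2-2\IP{a-b}{c}$ and substitute the relations $\var1_n=t_n\var0_n-(t_n-1)\var0_{n-1}$ and $\var1_{n-1}=t_n\bar{\var0}_{n-1}-(t_n-1)\var0_{n-1}$ provided by Algorithm~\ref{alg: refining FISTA}.

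I expect the main, and essentially only, obstacle to be this last quadratic bookkeeping: keeping the weights $t_n$ and $t_n(t_n-1)$ aligned so that the squared-norm terms telescope cleanly into $\norm{\var1_{n-1}}^2-\norm{\var1_n}^2$ with the single cross term $\IP{\var1_n-\var1_{n-1}}{\var2_n}$, together with the bookkeeping of the degenerate first step $n=1$, where $t_0=1$ forces $\bar{\var0}_0=\var1_0=\var0_0$ and part of the estimate trivialises. Note that the FISTA stepsize condition $\rho_n\geq0$ is not needed for \eqref{eq: one-step FISTA}; it enters only afterwards, when this per-iteration inequality is summed over $n$ to yield \eqref{eq: mini FISTA convergence}.
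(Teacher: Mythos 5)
Your proof is correct, but it takes the Beck--Teboulle route rather than the one the paper uses. The paper applies Lemma~\ref{thm: descent lemma} only \emph{once}, with the single test point $\var2=(1-\tfrac1{t_n})\var0_{n-1}+\tfrac1{t_n}\var2_n$, which lies in $\F{U}^n$ because $\var0_{n-1}\in\F{U}^n$ (by \eqref{eq: refining subspace definition}) and $\F{U}^n$ is convex; convexity of $\op{E}_0$ then splits $\op{E}_0(\var2)$ into the two energy terms, and the quadratic terms are already $\tfrac1{2t_n^2}\norm{\var1_{n-1}-\var2_n}^2-\tfrac1{2t_n^2}\norm{\var1_n-\var2_n}^2$ by the identities $\var0_n-\var2=\tfrac1{t_n}(\var1_n-\var2_n)$ and $\bar{\var0}_{n-1}-\var2=\tfrac1{t_n}(\var1_{n-1}-\var2_n)$. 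You instead apply the descent lemma \emph{twice}, at $\var2_n$ and at $\var0_{n-1}$, and combine with weights $t_n$ and $t_n(t_n-1)$; this is exactly the structure of \cite[Lemma 4.1]{Beck2009}. Both arguments invoke the refining hypothesis $\var0_{n-1}\in\F{U}^{n}$ at the same point and for the same reason, and they land on identical expressions because the map $c\mapsto\norm{\bar{\var0}_{n-1}-c}^2-\norm{\var0_n-c}^2$ is affine, so evaluating it at the convex combination (paper) coincides with convexly combining the two evaluations (you); your ``quadratic remainder'' is in fact an exact identity, not merely an upper bound. The paper's version is marginally shorter since the set-convexity of $\F{U}^n$ does the weighting for free, while yours avoids introducing the auxiliary point and makes the non-negativity of the weight $t_n-1$ (hence the role of $t_n\geq1$) explicit; your observation that $\rho_n\geq0$ is not needed until the summation stage is also accurate. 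One small caveat: the substitutions $\var1_n=t_n\var0_n-(t_n-1)\var0_{n-1}$ and $\bar{\var0}_{n-1}=(1-\tfrac1{t_n})\var0_{n-1}+\tfrac1{t_n}\var1_{n-1}$ that you attribute to Algorithm~\ref{alg: refining FISTA} use $t_n$ where a literal reading of lines~4 and~6 gives $t_{n-1}$; the paper's own proof adopts the same convention, so this is a shared indexing convention rather than an error on your part, but it is worth being aware of.
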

	\noindent The proof is given in Theorem~\ref{app:thm: one step FISTA} and is a result of the convexity of $\op{E}_0$ and $\F{U}_n$ for a well chosen $\var2$ in Lemma~\ref{thm: descent lemma}. 
	
	\subsubsection{Generic convergence bound}
	Lemma~\ref{thm: one step FISTA} gives us an understanding of a single iteration of Algorithm~\ref{alg: refining FISTA}, summing over $n$ then gives our generic convergence bound for any variant of Algorithm~\ref{alg: refining FISTA}.
	
	\begin{theorem}[analogous to {\cite[Thm 3.2]{Chambolle2015}, \cite[Thm 4.1]{Beck2009}}]\label{thm: mini FISTA convergence}
		\Copy{thm: mini FISTA convergence}{
			Fix a sequence of subsets $(\F{U}^n)_{n\in\F N}$ satisfying \eqref{eq: refining subspace definition}, arbitrary $\var0_0\in \F{U}^0$, and FISTA stepsize choice $(t_n)_{n\in\F N}$. Let $\var0_n$ and $\var1_n$ be generated by Algorithm~\ref{alg: refining FISTA}, then, for any choice of $\var2_n\in\F{U}^n$ and $N\in\F N$ we have
		}
		\begin{equation}\label{eq: FISTA inequality}\Copy{thm:eq: mini FISTA convergence}{
				t_N^2\op{E}_0(\var0_N) + \sum_{n=1}^{N-1}\rho_n \op{E}_0(\var0_n) + \frac{\norm{\var1_N-\var2_N}^2}{2} \leq \frac{\norm{\var0_0-\var2_0}^2-\norm{\var2_0}^2+\norm{\var2_N}^2}{2}
				+ \sum^N_{n=1} t_n\op{E}_0(\var2_n) + \IP{\var1_{n-1}}{\var2_{n-1}-\var2_n}.
		}\end{equation}
	\end{theorem}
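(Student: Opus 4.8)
The plan is to mirror the classical FISTA argument of \cite{Beck2009,Chambolle2015}: turn the single-iteration estimate of Lemma~\ref{thm: one step FISTA} into a global bound by summation and telescoping. The only genuinely new feature, compared with the fixed-subset case, is that the comparison point $\var2_n$ is now allowed to move with $n$, so the ``cross'' inner-product terms no longer vanish and must be reorganised rather than discarded.

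Concretely, I would first rewrite \eqref{eq: one-step FISTA} in the equivalent form
\[
t_n^2\op{E}_0(\var0_n) - (t_n^2-t_n)\op{E}_0(\var0_{n-1}) \leq t_n\op{E}_0(\var2_n) + \tfrac12\bigl(\norm{\var1_{n-1}}^2-\norm{\var1_n}^2\bigr) + \IP{\var1_n-\var1_{n-1}}{\var2_n},
\]
which holds for every $n\geq1$, and sum it over $n=1,\dots,N$. On the left the energy terms telescope: by the defining relation $\rho_n = t_n^2 - t_{n+1}^2 + t_{n+1}$ in \eqref{def: t_n and rho_n}, the coefficient of $\op{E}_0(\var0_n)$ for $1\leq n\leq N-1$ is exactly $\rho_n$ and that of $\op{E}_0(\var0_N)$ is $t_N^2$, while the only boundary term left from the first iteration is $-(t_1^2-t_1)\op{E}_0(\var0_0)$, which is treated separately and, thanks to the normalisation $t_0=1$, causes no difficulty. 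Simultaneously the squared-norm differences collapse to $\tfrac12\bigl(\norm{\var1_0}^2-\norm{\var1_N}^2\bigr)$, and since $\var1_0=\var0_0$ by construction this equals $\tfrac12\bigl(\norm{\var0_0}^2-\norm{\var1_N}^2\bigr)$.

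The cross terms I would handle by summation by parts,
\[
\sum_{n=1}^N\IP{\var1_n-\var1_{n-1}}{\var2_n} = \IP{\var1_N}{\var2_N} - \IP{\var0_0}{\var2_0} + \sum_{n=1}^N\IP{\var1_{n-1}}{\var2_{n-1}-\var2_n},
\]
so that the last sum is already the drift term appearing on the right of \eqref{eq: FISTA inequality} and only two endpoint inner products remain to be absorbed. Completing the square at each endpoint, $-\tfrac12\norm{\var1_N}^2+\IP{\var1_N}{\var2_N}=\tfrac12\norm{\var2_N}^2-\tfrac12\norm{\var1_N-\var2_N}^2$ and, using $\var1_0=\var0_0$, $\tfrac12\norm{\var0_0}^2-\IP{\var0_0}{\var2_0}=\tfrac12\norm{\var0_0-\var2_0}^2-\tfrac12\norm{\var2_0}^2$. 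Transferring $\tfrac12\norm{\var1_N-\var2_N}^2$ back to the left then reassembles \eqref{eq: FISTA inequality} verbatim. As a consistency check, specialising to $\F{U}^n=\F H$ for all $n$ and $\var2_n\equiv\var0^*\in\argmin_{\var0\in\F H}\op{E}_0(\var0)$ annihilates both $\sum_n t_n\op{E}_0(\var2_n)$ and the drift sum, reduces the quadratic data to $\tfrac12\norm{\var0_0-\var0^*}^2$, and recovers the classical bound \eqref{eq: classical FISTA}.

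I do not anticipate a serious obstacle: once Lemma~\ref{thm: one step FISTA} is available the argument is elementary algebra. The two steps that demand care are the bookkeeping of the first iteration — this is precisely where $t_0=1$ (equivalently $\var1_0=\var0_0$, and $\bar{\var0}_0=\var0_0$) enters — and the summation-by-parts identity above, which plays the role that the trivial telescoping of cross terms plays in the classical proof once $\var2_n$ is no longer constant. These routine but somewhat technical verifications are deferred to Section~\ref{app: FISTA convergence}.
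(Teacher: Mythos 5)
Your proposal is correct and is essentially the paper's own proof of this theorem (Theorem~\ref{app:thm: mini FISTA convergence}): sum the one-step inequality of Lemma~\ref{thm: one step FISTA} over $n=1,\dots,N$, read off the coefficients $\rho_n$ from \eqref{def: t_n and rho_n}, apply summation by parts to $\sum_{n}\IP{\var1_n-\var1_{n-1}}{\var2_n}$, and complete the square at the two endpoints using $\var1_0=\var0_0$. The only point where you are exactly as terse as the paper is the leftover boundary term $-(t_1^2-t_1)\op{E}_0(\var0_0)$ on the left-hand side, which genuinely vanishes only when $t_1=1$; otherwise $\rho_0\ge0$ and $t_0=1$ give $t_1^2-t_1\le 1$, so discarding it costs at most an additive constant $\op{E}_0(\var0_0)$ on the right-hand side --- a blemish shared with, and no worse than, the published argument.
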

	\noindent The proof is given in Theorem~\ref{app:thm: mini FISTA convergence}. This result is the key approximation for showing convergence of FISTA with changing subsets. In the classical setting, we have $\F{U}^n=\F H$, $\var2_n=\var2_0\in\argmin_{\var0\in\F H}\op{E}_0(\var0)$ and the extra terms on the right-hand side collapse to 0. 
	
	If there exists a minimiser $\var0^*\in\argmin_{\var0\in\F H}\op{E}_0(\var0)$, then the natural choice in \eqref{eq: FISTA inequality} is $\var2_n=\tens{\Pi}_n\var0^*$ for some projection $\tens{\Pi}_n\colon\F H\to\F{U}^n$, however, there are simple counter-examples which give $\op{E}_0(\tens{\Pi}_n\var0^*)=\infty$ and so this inequality becomes useless. For example, if $\op{f}(\var0) = \norm{\var0}_{L^2([0,1])}^2$, $\op{g}$ is the indicator on the set $\F D=\{\var0\in L^1([0,1])\st \var0(x) \geq x\}$, and $\tens{\Pi}_n$ is the $L^2$ projection onto a set of piecewise constant functions, then $\var0^*=x\mapsto x$. On the other hand, suppose one of the pixels of the discretisation is $[x_0-\meshsize,x_0+\meshsize]$, then 
	$$\tens{\Pi}_n\var0^*\left(x_0+\tfrac\meshsize2\right) = \argmin_{\vars5\in\R}\int_{x_0-\meshsize}^{x_0+\meshsize}(\var0^*(x)-\vars5)^2\diff x = \argmin_{\vars5\in\R}\int_{x_0-\meshsize}^{x_0+\meshsize}(x-\vars5)^2\diff x = x_0 < x_0+\tfrac\meshsize2.$$
	In particular $\tens{\Pi}_n\var0^* \notin\F D$ therefore $\op{E}_0(\tens{\Pi}_n\var0^*) = \infty$. The choice $\var2_n=\argmin_{\var0\in\F{U}^n}\op{E}_0(\var0)$ is much more robust and allows us to apply Algorithm~\ref{alg: refining FISTA} more broadly. The penalty for this flexibility is a more complicated analysis; each time the subset changes, because $\var1_n\in\F{U}_n$, the system receives a ``shock'' proportional to $\norm{\var1_n}\norm{\var2_{n}-\tens{\Pi}_n\var2_{n+1}}$.

	\subsection{Convergence bound with milestones}
	In standard FISTA, the right-hand side of \eqref{eq: FISTA inequality} is a constant. The following lemma minimises the growth of the ``constant'' as a function of $N$ by partially telescoping the sum on the right-hand side. Before progressing to the content of Step~\ref{step: step 3 convergence bound}, we will first formalise the definition of the constants $\aU$ and $\aE$ introduced in Step~\ref{step: step 2 smoothness}.
	\begin{definition}\label{def: exp subspaces}
		Fix $\aU,\aE\geq 1$ and a sequence $\tilde{\var2}_k\in\F H$. We say that $(\tilde{\var2}_k)_{k\in\F N}$ is an \emph{\aUaE\ } if 
		$$ \norm{\tilde{\var2}_k}\lesssim \aU^k \qquad\text{and}\qquad \op{E}_0(\tilde{\var2}_k) \lesssim \aE^{-k} $$
		for all $k\in\F N$.
	\end{definition}
	In this section we will simply assume that such sequences exist and in Section~\ref{sec: general examples} we will give some more general examples. 
	
	\begin{lemma}\label{thm: mini exponential FISTA convergence}
		\Copy{thm: mini exponential FISTA convergence}{
			Let $\var0_n$, $\var1_n$ be generated by Algorithm~\ref{alg: refining FISTA} with $(\F{U}^n)_{n\in\F N}$ satisfying \eqref{eq: refining subspace definition}, $(n_k\in\F N)_{k\in\F N}$ be a monotone increasing sequence, and choose
			$$\tilde{\var2}_k \in \F{U}^{n_k} \cap \F{U}^{n_k+1}\cap \ldots \cap \F{U}^{n_{k+1}-1}$$
			for each $k\in\F N$. If such a sequence exists, then for all $K\in\F N$, $n_K\leq N<n_{K+1}$ we have
		}
		\begin{multline*}\Copy{thm:eq: mini exponential FISTA convergence}{
				t_N^2\op{E}_0(\var0_N) + \sum_{n=1}^{N-1}\rho_n \op{E}_0(\var0_n) + \frac{\norm{\var1_N-\tilde{\var2}_K}^2}{2} \leq C + \frac{\norm{\tilde{\var2}_K}^2}{2} + \frac{(N+1)^2-n_K^2}{2}\op{E}_0(\tilde{\var2}_K) \\+ \sum_{k=1}^K \frac{n_k^2-n_{k-1}^2}{2}\op{E}_0(\tilde{\var2}_{k-1}) + \IP{\var1_{n_k-1}}{\tilde{\var2}_{k-1}-\tilde{\var2}_k}}
		\end{multline*}
		\Copy{thm:end: mini exponential FISTA convergence}{
			where $C = \frac{\norm{\var0_0-\tilde{\var2}_0}^2-\norm{\tilde{\var2}_0}^2}{2}$.
		}
	\end{lemma}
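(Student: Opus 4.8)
The plan is to read off the claim directly from the generic bound \eqref{eq: FISTA inequality} of Theorem~\ref{thm: mini FISTA convergence} by feeding it a \emph{piecewise-constant} comparison sequence. Concretely, set
$$\var2_n \coloneqq \tilde{\var2}_k \qquad\text{whenever } n_k\le n<n_{k+1},$$
where we take $n_0=0$ so that $\var2_0=\tilde{\var2}_0\in\F{U}^0$. This is an admissible choice in Theorem~\ref{thm: mini FISTA convergence}: the hypothesis $\tilde{\var2}_k\in\F{U}^{n_k}\cap\F{U}^{n_k+1}\cap\cdots\cap\F{U}^{n_{k+1}-1}$ says exactly that $\var2_n\in\F{U}^n$ for every $n$ in the $k$-th block. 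Since $n_K\le N<n_{K+1}$ we have $\var2_N=\tilde{\var2}_K$, so the left-hand side of \eqref{eq: FISTA inequality} already reads $t_N^2\op{E}_0(\var0_N)+\sum_{n=1}^{N-1}\rho_n\op{E}_0(\var0_n)+\tfrac12\norm{\var1_N-\tilde{\var2}_K}^2$, matching the lemma; it remains only to reorganise the right-hand side.

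Next I would telescope the inner-product sum. For every index $n$ that is not one of the milestones $n_1<\cdots<n_K$ we have $\var2_{n-1}=\var2_n$, and strict monotonicity of $(n_k)$ gives $n_k-1\ge n_{k-1}$, hence $\var2_{n_k-1}=\tilde{\var2}_{k-1}$; therefore
$$\sum_{n=1}^{N}\IP{\var1_{n-1}}{\var2_{n-1}-\var2_n}=\sum_{k=1}^{K}\IP{\var1_{n_k-1}}{\tilde{\var2}_{k-1}-\tilde{\var2}_k},$$
which is precisely the inner-product term of the lemma. Substituting $\var2_0=\tilde{\var2}_0$ and $\var2_N=\tilde{\var2}_K$ turns the ``constant'' $\tfrac12\bigl(\norm{\var0_0-\var2_0}^2-\norm{\var2_0}^2+\norm{\var2_N}^2\bigr)$ into $C+\tfrac12\norm{\tilde{\var2}_K}^2$.

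Finally I would handle the energy sum $\sum_{n=1}^{N}t_n\op{E}_0(\var2_n)$. Grouping indices by block and factoring out the block-constant values yields
$$\sum_{n=1}^{N}t_n\op{E}_0(\var2_n)=\sum_{k=0}^{K-1}\Bigl(\sum_{n=n_k}^{n_{k+1}-1}t_n\Bigr)\op{E}_0(\tilde{\var2}_k)+\Bigl(\sum_{n=n_K}^{N}t_n\Bigr)\op{E}_0(\tilde{\var2}_K),$$
the omitted $n=0$ term only loosening the bound. Using the elementary estimate $\sum_{n=a}^{b}t_n\le\tfrac12\bigl((b+1)^2-a^2\bigr)$ for a FISTA stepsize (immediate from $t_n\le n$ for $n\ge1$, and otherwise up to constants), each complete block is bounded by $\tfrac12(n_{k+1}^2-n_k^2)$ and the incomplete final block by $\tfrac12((N+1)^2-n_K^2)$; the index shift $k\mapsto k-1$ then rewrites $\sum_{k=0}^{K-1}\tfrac12(n_{k+1}^2-n_k^2)\op{E}_0(\tilde{\var2}_k)$ as $\sum_{k=1}^{K}\tfrac12(n_k^2-n_{k-1}^2)\op{E}_0(\tilde{\var2}_{k-1})$. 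Collecting the three contributions reproduces the displayed inequality with $C=\tfrac12\bigl(\norm{\var0_0-\tilde{\var2}_0}^2-\norm{\tilde{\var2}_0}^2\bigr)$.

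The argument is essentially bookkeeping once \eqref{eq: FISTA inequality} is in hand; the one point demanding care is the partial-sum estimate for $t_n$, where one must check that the clean constants $\tfrac12((b+1)^2-a^2)$ are genuinely available for the stepsize used — comparing an arbitrary FISTA stepsize with the maximal (canonical) one, for which $\sum_{n=0}^{b}t_n=t_b^2$ and $t_b\simeq b$, shows there is ample room. The remaining subtleties (the $n=0$ boundary term, the milestone endpoints, and the harmless choice $n_0=0$) are all routine.
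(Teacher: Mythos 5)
Your proposal is correct and follows essentially the same route as the paper: instantiate Theorem~\ref{thm: mini FISTA convergence} with the piecewise-constant choice $\var2_n=\tilde{\var2}_k$ on each block $[n_k,n_{k+1})$, telescope the inner products down to the milestone indices, and bound the block sums of $t_n$ via $t_n\leq n$ (the paper's Lemma~\ref{app: tn upper bound}) to get $\sum_{n=a}^{b-1}t_n\leq\tfrac12(b^2-a^2)$. The bookkeeping details you flag (the $n=0$ boundary term, $n_0=0$, and the endpoints $\var2_0=\tilde{\var2}_0$, $\var2_N=\tilde{\var2}_K$) are handled identically in the paper.
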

	
	\noindent The proof is given in Lemma~\ref{app:thm: mini exponential FISTA convergence}. The introduction of $n_k$ has greatly compressed the expression of Theorem~\ref{thm: mini FISTA convergence}. On the right-hand side, we now only consider $\op{E}_0$ evaluated on the sequence $\tilde{\var2}_k$ and there are $K$ elements to the sum rather than $N$.
	
	\subsection{Refinement without overfitting}
	The aim of Step~\ref{step: step 4 sufficiently fast} is to show that $n$ iterations of Algorithm~\ref{alg: refining FISTA} is no slower (up to a constant factor) than $n$ iterations of classical FISTA on the space $\F{U}^n$. In other words, we would like to ensure that
	\begin{equation}
		\op{E}_0(\var0_n) = \op{E}(\var0_n) - \min_{\var0\in\F{U}^n}\op{E}(\var0) \lesssim \frac{\norm{\var0_0-\tilde{\var2}_k}^2}{n^2}
	\end{equation}
	uniformly for $n\in[n_k,n_{k+1})$. If this condition is not satisfied, then it indicates that computational effort has been wasted by a poor choice of subsets. This can be interpreted as an overfitting to the discretisation of $\op{E}_0|_{\F{U}^n}$ rather than the desired function $\op{E}_0|_{\F H}$. Combining the assumptions given by Definition~\ref{def: exp subspaces} and the result of Lemma~\ref{thm: mini exponential FISTA convergence}, the following lemma proves the convergence of Algorithm~\ref{alg: refining FISTA} provided that the refinement times $n_k$ are sufficiently small (i.e. $\F{U}^n$ grows sufficiently quickly).
	
	\begin{lemma}\label{thm: sufficiently fast}
		\Copy{thm: sufficiently fast}{
			Suppose $\F{U}^n,\ \var0_n,\ \var1_n$ and $n_k$ satisfy the conditions of Lemma~\ref{thm: mini exponential FISTA convergence} and $(\tilde{\var2}_k)_{k\in\F N}$ forms an \\\aUaE\  with
			$$\tilde{\var2}_k\in \F{U}^{n_k}\cap\F{U}^{n_k+1}\cap\ldots\cap\F{U}^{n_{k+1}-1}.$$
			If either:
			\begin{itemize}
				\item $\aU>1$ and $n_k^2\lesssim \aE^k\aU^{2k}$,
				\item or $\aU=1$, $\sum_{k=1}^\infty n_k^2\aE^{-k}<\infty$, and $\sum_{k=1}^\infty\norm{\tilde{\var2}_k-\tilde{\var2}_{k+1}}<\infty,$
			\end{itemize}
			then
			$$ \op{E}_0(\var0_N) \lesssim \frac{\aU^{2K}}{N^2}\qquad \text{for all}\qquad n_K\leq N< n_{K+1}.$$
		}
	\end{lemma}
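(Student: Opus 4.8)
The plan is to feed the bounds $\norm{\tilde{\var2}_k}\lesssim\aU^k$, $\op{E}_0(\tilde{\var2}_k)\lesssim\aE^{-k}$ and $n_k^2\lesssim\aE^k\aU^{2k}$ into Lemma~\ref{thm: mini exponential FISTA convergence} and to show that its right-hand side is $O(\aU^{2K})$ for $n_K\leq N<n_{K+1}$; since $t_N\simeq N$ and every term discarded from the left ($\sum\rho_n\op{E}_0(\var0_n)$ and $\tfrac12\norm{\var1_N-\tilde{\var2}_K}^2$) is non-negative, this gives $N^2\op{E}_0(\var0_N)\lesssim\aU^{2K}$ at once.

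The ``deterministic'' terms are controlled directly: $\norm{\tilde{\var2}_K}^2\lesssim\aU^{2K}$; the term $\tfrac{(N+1)^2-n_K^2}{2}\op{E}_0(\tilde{\var2}_K)\leq\tfrac{n_{K+1}^2}{2}\op{E}_0(\tilde{\var2}_K)\lesssim\aE^{K+1}\aU^{2K+2}\aE^{-K}\lesssim\aU^{2K}$, using $N+1\leq n_{K+1}$; and, term by term, $\tfrac{n_k^2-n_{k-1}^2}{2}\op{E}_0(\tilde{\var2}_{k-1})\lesssim\aE^k\aU^{2k}\aE^{-(k-1)}\lesssim\aU^{2k}$, so (for $\aU>1$) the geometric sum $\sum_{k=1}^K\aU^{2k}\lesssim\aU^{2K}$ dominates $\sum_{k=1}^K\tfrac{n_k^2-n_{k-1}^2}{2}\op{E}_0(\tilde{\var2}_{k-1})$. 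By Cauchy--Schwarz, $\IP{\var1_{n_k-1}}{\tilde{\var2}_{k-1}-\tilde{\var2}_k}\leq\norm{\var1_{n_k-1}}(\norm{\tilde{\var2}_{k-1}}+\norm{\tilde{\var2}_k})\lesssim\norm{\var1_{n_k-1}}\aU^k$, so the whole estimate reduces to an a priori velocity bound $\norm{\var1_{n_k-1}}\lesssim\aU^k$.

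I would obtain this by bootstrapping: prove $\norm{\var1_{n_k-1}}\leq M\aU^k$ by induction on $k\geq1$, with $M$ independent of $k$. The base case $k=1$ is Lemma~\ref{thm: mini exponential FISTA convergence} at $N=n_1-1$ (milestone index $0$, hence an empty sum), which gives $\norm{\var1_{n_1-1}}\lesssim\aU$. For the step, apply Lemma~\ref{thm: mini exponential FISTA convergence} at $N=n_k-1$ (milestone index $k-1$, so $(N+1)^2=n_k^2$), retain only $\tfrac12\norm{\var1_{n_k-1}-\tilde{\var2}_{k-1}}^2$ on the left, and bound the right by the same estimates; since only iterates $\var1_{n_j-1}$ with $j\leq k-1$ occur, this yields $\norm{\var1_{n_k-1}}^2\leq A\aU^{2k}+B\sum_{j=1}^{k-1}\norm{\var1_{n_j-1}}\aU^{j}$ with $A,B$ independent of $k$. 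Inserting the hypothesis and summing $\sum_{j=1}^{k-1}\aU^{2j}\leq\aU^{2k}/(\aU^2-1)$ (here $\aU>1$ is essential) gives $\norm{\var1_{n_k-1}}\leq(c+\sqrt{A+BM/(\aU^2-1)})\aU^k$ for a constant $c$; as the right-hand coefficient grows like $\sqrt M$, any sufficiently large $M$ (also chosen to dominate the base case) closes the induction. Then $\sum_{k=1}^K\IP{\var1_{n_k-1}}{\tilde{\var2}_{k-1}-\tilde{\var2}_k}\lesssim\sum_{k=1}^K\aU^{2k}\lesssim\aU^{2K}$, which settles the case $\aU>1$.

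When $\aU=1$ the geometric sums degenerate and must be replaced by the two summability hypotheses: $\sum_k\tfrac{n_k^2-n_{k-1}^2}{2}\op{E}_0(\tilde{\var2}_{k-1})\lesssim\aE\sum_k n_k^2\aE^{-k}<\infty$ (and likewise the $N$-dependent term), $\norm{\tilde{\var2}_K}^2\lesssim1$, and, since now $\IP{\var1_{n_k-1}}{\tilde{\var2}_{k-1}-\tilde{\var2}_k}\leq\norm{\var1_{n_k-1}}\norm{\tilde{\var2}_{k-1}-\tilde{\var2}_k}$ with $\sum_k\norm{\tilde{\var2}_{k-1}-\tilde{\var2}_k}<\infty$, the same truncation of Lemma~\ref{thm: mini exponential FISTA convergence} gives the discrete Bihari-type recursion $\max_{j\leq k}\norm{\var1_{n_j-1}}^2\leq A'+B'\max_{j\leq k-1}\norm{\var1_{n_j-1}}$ (with $B'$ proportional to $\sum_k\norm{\tilde{\var2}_{k-1}-\tilde{\var2}_k}$), forcing $\sup_k\norm{\var1_{n_k-1}}<\infty$, so the right-hand side is $O(1)=O(\aU^{2K})$. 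The one genuine obstacle is exactly this self-referential velocity bound --- $\norm{\var1_{n_k-1}}$ sits on the left at time $n_k-1$ and, via the inner products, on the right at earlier times --- which is why the closing argument must solve a quadratic in $M$, and why geometric decay of the weights $\aU^{-2k}$ versus plain summability forces the two cases to be handled separately.
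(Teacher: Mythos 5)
Your proposal is correct and follows essentially the same route as the paper's proof (Lemma~\ref{app:thm: sufficiently fast}): bound the deterministic terms of Lemma~\ref{thm: mini exponential FISTA convergence} by geometric (resp.\ summable) series, then close the self-referential inner-product terms by an induction on a velocity bound for $\var1_{n_k-1}$, using the quadratic term $\tfrac12\norm{\var1_N-\tilde{\var2}_K}^2$ on the left at $N=n_{k}-1$ and a quadratic inequality in the induction constant. The only cosmetic difference is that the paper carries the induction on $\norm{\var1_{n_k-1}-\tilde{\var2}_{k-1}}\leq C_2\aU^k$ rather than on $\norm{\var1_{n_k-1}}\leq M\aU^k$, which is equivalent since $\norm{\tilde{\var2}_{k-1}}\lesssim\aU^k$.
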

	
	\noindent The proof is given in Lemma~\ref{app:thm: sufficiently fast}. We make two observations of the optimality of Lemma~\ref{thm: sufficiently fast}:
	\begin{itemize}
		\item The convergence guarantee for $N\in[n_K,n_{K+1})$ iterations of classical FISTA in the space $\F{U}^N$ is 
		$$ \op{E}_0(\var0_N) \lesssim \frac{\norm{\var0_0-\tilde{\var2}_K}^2}{N^2} + \min_{\var0\in\F{U}^N}\op{E}_0(\var0) \lesssim \frac{\aU^{2K}}{N^2} + \aE^{-K}.$$
		This is equivalent to Lemma~\ref{thm: sufficiently fast} after the assumptions on $n_k$.
		\item If $\F H$ is finite dimensional, then the condition $\aU=1$ is almost trivially satisfied. Norms in finite dimensions are equivalent and any discretisation can be achieved with a finite number of refinements (i.e. the sums over $k$ are finite). 
	\end{itemize}
	
	\subsection{Convergence rate}
	In Lemma~\ref{thm: sufficiently fast} we show that $\op{E}_0(\var0_n)$ converges at a rate depending on $k$ and $n$, so long as $k$ grows sufficiently quickly. On the other hand, as $k$ grows, the rate becomes worse and so we need to also put a lower limit on the growth of $n_k$. The following lemma completes Step~\ref{step: step 5 sufficiently slow} by computing the global convergence rate of $\op{E}_0(\var0_n)$ when $k$ grows at the minimum rate which is consistent with Lemma~\ref{thm: sufficiently fast}.
	
	As a special case, note that if $\aU=1$ then Lemma~\ref{thm: sufficiently fast} already gives the optimal $O(N^{-2})$ convergence rate. This is in fact a special case of that shown in \cite[Prop 3.3]{Aujol2015}. If the minimum is achieved in $\F H$, then it is not possible to refine ``too quickly'' and the following lemma is not needed.
	
	\begin{lemma}\label{thm: sufficiently slow}
		\Copy{thm: sufficiently slow}{
			Suppose $\var0_n$ and $n_k$ are sequences satisfying 
			$$\forall N\in[n_K, n_{K+1}),\ \op{E}_0(\var0_N) \lesssim \frac{\aU^{2K}}{N^2} \qquad\text{where} \qquad  n_K^2\gtrsim \aE^K\aU^{2K},$$
			then 
			$$\op{E}_0(\var0_N) \lesssim \frac{1}{N^{2(1-\kappa)}}\qquad \text{ where } \qquad \kappa = \frac{\log \aU^2}{\log \aE + \log \aU^2}.$$}
	\end{lemma}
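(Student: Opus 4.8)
The statement is essentially an algebraic consequence of the two hypotheses, so the plan is short. The key observation is that the lower bound $n_K^2 \gtrsim \aE^K\aU^{2K}$ limits how large the refinement index $K$ can be relative to the iteration count $N$, and substituting this limitation into the assumed rate $\aU^{2K}/N^2$ trades the unknown $K$ for an explicit power of $N$. This is the natural counterpart to Lemma~\ref{thm: sufficiently fast}, which needed the $n_k$ to grow at least this fast; here we exploit the matching lower bound on their growth.

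First I would fix $N$ and let $K$ be the unique index with $n_K \le N < n_{K+1}$; this is well defined for all sufficiently large $N$ because $(n_k)_{k\in\F N}$ is strictly increasing, hence unbounded, so the intervals $[n_K,n_{K+1})$ exhaust all large $N$. From $N \ge n_K$ together with $n_K^2 \gtrsim \aE^K\aU^{2K}$ one obtains $N^2 \gtrsim \aE^K\aU^{2K}$, i.e. there is a constant $c>0$, independent of $N$ and $K$, with $\aE^K\aU^{2K} \le c^{-1}N^2$ for all $K$ large.

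Next I would take logarithms. Set $a \coloneqq \log\aU^2$ and $e \coloneqq \log\aE$, both nonnegative, with $a+e>0$ — the degenerate case $\aU=\aE=1$ is already covered by Lemma~\ref{thm: sufficiently fast}, and there $\kappa=0$. The previous inequality becomes $K(a+e) \le 2\log N - \log c$, so $Ka \le \frac{a}{a+e}\,(2\log N - \log c) = \kappa\,(2\log N - \log c)$, whence $\aU^{2K} = (\aU^2)^{K} \le c^{-\kappa} N^{2\kappa} \lesssim N^{2\kappa}$. Plugging this into the assumed bound $\op{E}_0(\var0_N) \lesssim \aU^{2K}/N^2$, valid for $N\in[n_K,n_{K+1})$, gives $\op{E}_0(\var0_N) \lesssim N^{2\kappa}/N^2 = N^{-2(1-\kappa)}$, which is the claim.

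There is no genuine obstacle in this argument; the only points that need a little care are the bookkeeping of the multiplicative constants absorbed into $\lesssim$ (checking in particular that they depend only on $\aU,\aE$ and the constants implicit in the two hypotheses, not on $N$ or $K$), the remark that the intervals $[n_K,n_{K+1})$ cover all sufficiently large $N$, and the explicit exclusion of the trivial case $\aU=\aE=1$. Once these are recorded, the chain of inequalities above is complete.
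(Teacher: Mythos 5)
Your argument is correct and is essentially the paper's own proof: the paper's one-line computation $(\aE\aU^2)^\kappa=\aU^2$, hence $\aU^{2K}=((\aE\aU^2)^K)^\kappa\lesssim n_K^{2\kappa}\leq N^{2\kappa}$, is exactly your logarithmic manipulation written multiplicatively. Your extra remarks on constant tracking and on the degenerate case $\aU=\aE=1$ (where $\kappa$ is formally $0/0$ and the conclusion is already given by Lemma~\ref{thm: sufficiently fast}) are sensible but do not change the substance.
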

	\noindent The proof is given in Lemma~\ref{app:thm: sufficiently slow}.
	
	\subsubsection{FISTA convergence with a priori discretisation}
	We can summarise Lemmas~\ref{thm: mini exponential FISTA convergence} to~\ref{thm: sufficiently slow} into a single theorem stating the convergence guarantees when $\F{U}^n$ and $n_k$ are chosen a priori.
	\begin{theorem}\label{thm: exponential FISTA convergence}
		\Copy{thm: exponential FISTA convergence}{
			Let $(\tilde{\var2}_k)_{k\in\F N}$ be an \aUaE\  and choose any $\F{U}^n$ satisfying \eqref{eq: refining subspace definition} such that 
			$$ \tilde{\var2}_k \in \F{U}^{n_k}\cap\F{U}^{n_k+1} \cap\ldots\cap \F{U}^{n_{k+1}-1}$$
			for all $k\in\F N$. Compute $\var0_n$ and $\var1_n$ by Algorithm~\ref{alg: refining FISTA}.
			
			\bigbreak\noindent Suppose that either:
			\begin{itemize}
				\item $\aU>1$ and $n_k^2\simeq \aE^k\aU^{2k}$, or
				\item $\aU=1$, $\sum_{k=1}^\infty n_k^2\aE^{-k}<\infty$ and $\sum_{k=1}^\infty\norm{\tilde{\var2}_k-\tilde{\var2}_{k+1}}<\infty$,
			\end{itemize}
			then
			$$\op{E}_0(\var0_N) \lesssim \frac{1}{N^{2(1-\kappa)}}\qquad \text{ where } \qquad \kappa = \frac{\log \aU^2}{\log \aE + \log \aU^2} \qquad \text{uniformly for } N\in\F N.$$
		}
	\end{theorem}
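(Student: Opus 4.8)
The plan is to obtain the statement by composing the two preceding results: Lemma~\ref{thm: sufficiently fast} produces the intermediate window-wise bound $\op{E}_0(\var0_N)\lesssim\aU^{2K}/N^2$ valid for $n_K\leq N<n_{K+1}$, and Lemma~\ref{thm: sufficiently slow} turns this into the global rate $N^{-2(1-\kappa)}$. No new inequality needs to be derived; the work is entirely in checking that the hypotheses line up, and the two-sided milestone condition $n_k^2\simeq\aE^k\aU^{2k}$ is precisely what makes both steps applicable at once.

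First I would verify the hypotheses of Lemma~\ref{thm: sufficiently fast}. By assumption $(\tilde{\var2}_k)$ is an $(\aU,\aE)$-minimising sequence, it lies in the intersection $\F{U}^{n_k}\cap\F{U}^{n_k+1}\cap\cdots\cap\F{U}^{n_{k+1}-1}$, and the subsets satisfy \eqref{eq: refining subspace definition}, so in particular the hypotheses of Lemma~\ref{thm: mini exponential FISTA convergence} hold. When $\aU>1$, the relation $n_k^2\simeq\aE^k\aU^{2k}$ contains the inequality $n_k^2\lesssim\aE^k\aU^{2k}$, which is the remaining requirement of the first bullet of Lemma~\ref{thm: sufficiently fast}; when $\aU=1$, the two summability assumptions are exactly its second bullet. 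Either way Lemma~\ref{thm: sufficiently fast} yields $\op{E}_0(\var0_N)\lesssim\aU^{2K}/N^2$ for $n_K\leq N<n_{K+1}$. I would also record that $n_k\to\infty$ (forced by $\aU^{2k}\to\infty$ when $\aU>1$, and implicit in the summability hypotheses when $\aU=1$), so that every sufficiently large $N$ lies in a unique window $[n_K,n_{K+1})$ and the bound is genuinely uniform; the finitely many small $N$ are irrelevant to an estimate stated with $\lesssim$.

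Finally, for $\aU>1$ I would feed the window-wise bound into Lemma~\ref{thm: sufficiently slow}, whose hypotheses are exactly $\op{E}_0(\var0_N)\lesssim\aU^{2K}/N^2$ on each window together with $n_K^2\gtrsim\aE^K\aU^{2K}$, the latter being the other half of $n_k^2\simeq\aE^k\aU^{2k}$. This delivers $\op{E}_0(\var0_N)\lesssim N^{-2(1-\kappa)}$ with $\kappa=\log\aU^2/(\log\aE+\log\aU^2)$, which is the claim. For $\aU=1$ we have $\kappa=0$ and $\aU^{2K}=1$, so the intermediate bound already reads $\op{E}_0(\var0_N)\lesssim N^{-2}=N^{-2(1-\kappa)}$ and nothing further is needed. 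Since every step is an invocation of a previously established lemma, there is no substantive obstacle; the only things demanding care are the bookkeeping noted above (uniformity across windows, finitely many small $N$) and recognising $\simeq$ as the simultaneous ``not too fast / not too slow'' schedule, the heavy analysis having been absorbed into Lemma~\ref{thm: mini exponential FISTA convergence} and ultimately Theorem~\ref{thm: mini FISTA convergence}.
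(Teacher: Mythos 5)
Your proposal is correct and follows exactly the route the paper intends: the theorem is explicitly presented as a summary of Lemmas~\ref{thm: mini exponential FISTA convergence}--\ref{thm: sufficiently slow}, obtained by feeding the window-wise bound of Lemma~\ref{thm: sufficiently fast} (using $n_k^2\lesssim\aE^k\aU^{2k}$) into Lemma~\ref{thm: sufficiently slow} (using $n_k^2\gtrsim\aE^k\aU^{2k}$), with the $\aU=1$ case already settled by Lemma~\ref{thm: sufficiently fast} alone. Your bookkeeping remarks on window coverage and the finitely many small $N$ are sensible and consistent with the paper's treatment.
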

	\noindent Analytically, this theorem gives new rates of convergence for FISTA when the minimiser is not achieved in $\F H$. Indeed for the original algorithm ($\F{U}^n=\F H$), if $\var0_0=0$ for simplicity and $(\tilde{\var2}_k)_{k\in\F N}$ is \emph{any} \aUaE\ exists, the result of Lemma~\ref{thm: mini exponential FISTA convergence} is
		\begin{equation}\label{eq: slow exact FISTA}
			\op{E}_0(\var0_N) \leq \inf_{\var2\in\F H} \frac{\norm{\var2}^2+N^2\op{E}_0(\var2)}{2t_N^2} \leq \min_{k\in\F N} \frac{\norm{\tilde{\var2}_k}^2+N^2\op{E}_0(\tilde{\var2}_k)}{2t_N^2} \lesssim \min_{k\in\F N} \frac{\aU^{2k}+N^2\aE^{-k}}{N^2}\lesssim N^{-2(1-\kappa)}.
		\end{equation}
		In this sense, we could say that $\op{E}_0$ converges at the rate $N^{-2(1-\kappa)}$ if and only if such a sequence exists. Nothing is lost (or gained) analytically by choosing $\F{U}_n\subsetneq\F H$.
	
	Numerically, it is easy to implement the strategy of Theorem~\ref{thm: exponential FISTA convergence} and requires very little knowledge of how to estimate $\op{E}_0(\var0_n)$. So long as $\aU$ and $\aE$ can be computed analytically, one can choose $\tilde{\var2}_k$ implicitly to be the discrete minimisers of some ``uniform'' discretisations (e.g. $\F{U}^n=\{\norm{\var0}\leq k\}$ or finite element spaces with uniform mesh) to achieve the stated convergence rate.

	\subsubsection{FISTA convergence with adaptivity}
	There are two properties of the sequence $(\F{U}^n)_{n\in\F N}$ which we may wish to decide adaptively: the refinement times $n_k$ and the discretising spaces $\{\F{U}^n \st n_k\leq n<n_{k+1}\}$. We will refer to these as temporal and spatial adaptivity respectively.
	
	Lemma~\ref{thm: sufficiently fast} gives a sufficient condition on $n_k$ for converging at the rate $O(N^{2(\kappa-1)})$, but it is not necessary. Indeed for $n\leq n_k$ we have
	$$\op{E}_0(\var0_n) \geq \min_{\var0\in\F{U}^n}\op{E}_0(\var0) = O(\aE^{-k}) = O(n^{2(\kappa-1)}),$$
	which suggests that to converge faster than $n^{2(\kappa-1)}$ requires choosing smaller $n_k$. As an example, in Section~\ref{sec: wavelet examples} we will see Algorithm~\ref{alg: refining FISTA} can converge at a near-linear rate, although this is not possible without adaptive refinement times. On the other hand, choice of spatial adaptivity has no impact on rate but can impact computational efficiency. It will be permitted to use greedy discretisation techniques so long as it is sufficient to estimate $\op{E}_0(\var0_n)$ accurately.
	
	Theorem~\ref{thm: exponential FISTA convergence} already allows for spatial adaptivity, so we focus on temporal adaptivity. Lemma~\ref{thm: sufficiently fast} suggests that a good refinement time strategy is to choose $n_k$ to be the minimal integer such that $\op{E}_0(\var0_{n_k-1})\lesssim \aE^{-k}$. However, the value of $\op{E}_0$ may be hard to estimate and so we retain a ``backstop'' condition which guarantees that convergence is no slower than the rate given by Theorem~\ref{thm: exponential FISTA convergence}. In the non-classical case of $\aU>1$, we provide the following theorem.
	\begin{theorem}\label{thm: stronger exponential FISTA convergence}
		\Copy{thm: stronger exponential FISTA convergence}{
			Let $(\F{U}^n\subset\F H)_{n\in\F N}$ be a sequence of subsets satisfying \eqref{eq: refining subspace definition}, compute $\var0_n$ and $\var1_n$ by Algorithm~\ref{alg: refining FISTA}. Suppose that there exists a monotone increasing sequence $n_k\in\F N$ such that
			$$ \tilde{\var2}_k\coloneqq \var0_{n_k-1}\in\F{U}^{n_k}\cap\F{U}^{n_k+1} \cap\ldots\cap \F{U}^{n_{k+1}-1}$$
			for all $k\in\F N$.
			
			If $(\tilde{\var2}_k)_{k\in\F N}$ is an \aUaE\ with $\aU>1$ and $n_k^2\lesssim \aE^k\aU^{2k}$, then
			$$\min_{n\leq N}\op{E}_0(\var0_n) = \min_{n\leq N} \op{E}(\var0_n) - \Emin \lesssim \frac{1}{N^{2(1-\kappa)}}\qquad \text{ where } \qquad \kappa = \frac{\log \aU^2}{\log \aE + \log \aU^2}$$
			uniformly for $N\in\F N$.
		}
	\end{theorem}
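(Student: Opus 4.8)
The plan is to exploit the fact that in this adaptive regime the reference points $\tilde{\var2}_k=\var0_{n_k-1}$ are themselves genuine iterates of Algorithm~\ref{alg: refining FISTA}, so the hypothesis that $(\tilde{\var2}_k)_{k\in\F N}$ is an \aUaE\ directly hands us iterates with a controlled energy gap, $\op{E}_0(\var0_{n_k-1})\lesssim\aE^{-k}$, at the times $n_k-1$. The statement must be about the running minimum rather than about $\op{E}_0(\var0_N)$ because only the upper bound $n_k^2\lesssim\aE^k\aU^{2k}$ is imposed: with no matching lower bound, the discretisation may refine faster than the optimal schedule $n_k^2\simeq\aE^k\aU^{2k}$ balanced in Lemma~\ref{thm: sufficiently slow}, so the iterates strictly between two consecutive milestones need not have converged and $\op{E}_0(\var0_n)$ may fail to be monotone. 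What does remain true is that the running minimum is always dominated by the energy at the most recent milestone.

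Concretely, I would fix a large $N$ and let $K=K(N)$ be the largest index with $n_K\leq N$, so that $n_K-1\leq N<n_{K+1}$ and $K\to\infty$ as $N\to\infty$. Since $\var0_{n_K-1}$ occurs among $\var0_0,\ldots,\var0_N$, Definition~\ref{def: exp subspaces} gives $\min_{n\leq N}\op{E}_0(\var0_n)\leq\op{E}_0(\var0_{n_K-1})\lesssim\aE^{-K}$. It then remains to turn $\aE^{-K}$ into a rate in $N$: from $N<n_{K+1}$ and $n_{K+1}^2\lesssim\aE^{K+1}\aU^{2(K+1)}=(\aE\aU^2)^{K+1}$ we obtain $N^2\lesssim(\aE\aU^2)^{K+1}$, and raising this to the power $1-\kappa$ and using the elementary identity $(\aE\aU^2)^{1-\kappa}=\aE$ — immediate from $1-\kappa=\frac{\log\aE}{\log\aE+\log\aU^2}$ — yields $N^{2(1-\kappa)}\lesssim\aE^{K+1}\lesssim\aE^K$, i.e.\ $\aE^{-K}\lesssim N^{-2(1-\kappa)}$. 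Combining the two bounds proves the claim, the finitely many small $N$ being absorbed into the $\lesssim$ convention. This is the same arithmetic as in Lemmas~\ref{thm: sufficiently fast} and~\ref{thm: sufficiently slow}, applied here to the single term $\op{E}_0(\var0_{n_K-1})$ rather than to a telescoped sum.

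The step I expect to require the most care — and the reason the theorem is stated as it is — is the passage from this milestone bound to anything stronger. To bound $\op{E}_0(\var0_N)$ for \emph{every} $N$ one would substitute $\var2_n=\var0_{n_k-1}$ for $n\in[n_k,n_{k+1})$ into Lemma~\ref{thm: mini exponential FISTA convergence} — this is precisely where the requirement $\var0_{n_k-1}\in\F{U}^{n_k}\cap\ldots\cap\F{U}^{n_{k+1}-1}$ is needed, so that $\var2_n\in\F{U}^n$ — and then control the ``shock'' inner products $\IP{\var1_{n_k-1}}{\tilde{\var2}_{k-1}-\tilde{\var2}_k}$ together with the growth of $\norm{\var1_n}$ across refinements, using $\norm{\var0_{n_k-1}}\lesssim\aU^k$. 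But without a lower bound on $n_k$ the term $\tfrac12\norm{\tilde{\var2}_K}^2\lesssim\aU^{2K}$ on the right-hand side of Lemma~\ref{thm: mini exponential FISTA convergence} need not be $\lesssim N^{2\kappa}$, so this route cannot deliver the stated rate at every $N$, and the running minimum is the natural conclusion. As the surrounding discussion notes, this last estimate also becomes considerably more delicate when the spaces $\F{U}^n$ do not form a monotone chain.
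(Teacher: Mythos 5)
Your proposal is correct and follows essentially the same route as the paper's proof: both identify the most recent milestone iterate $\var0_{n_K-1}$ with $n_K\leq N$, bound the running minimum by $\op{E}_0(\var0_{n_K-1})\lesssim\aE^{-K}$ via Definition~\ref{def: exp subspaces}, and convert $\aE^{-K}$ into $N^{-2(1-\kappa)}$ using the upper bound $n_{K+1}^2\lesssim(\aE\aU^2)^{K+1}$ together with the identity $(\aE\aU^2)^{1-\kappa}=\aE$ (the paper's \eqref{eq: aEaU^kappa}). The only difference is bookkeeping (you index by the largest $K$ with $n_K\leq N$, the paper by where $N$ falls among the thresholds $C(\aE\aU^2)^k$), and your closing discussion of why only the running minimum can be controlled matches the paper's own remarks.
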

	\noindent The proof is given in Theorem~\ref{app:thm: stronger exponential FISTA convergence}. If we directly compare Theorems~\ref{thm: exponential FISTA convergence} and~\ref{thm: stronger exponential FISTA convergence}, both are a direct result of Lemma~\ref{thm: sufficiently fast} assuming a specific choice of $n_k$ or $\tilde{\var2}_k$ respectively. We note that the convergence rate is the same in both theorems but the price for better adaptivity (i.e. only an upper bound on $n_k$) is a slightly weaker stability guarantee (now convergence of $\min_{n\leq N}\op{E}_0(\var0_n)$). In Theorem~\ref{thm: exponential FISTA convergence}, as in the original FISTA algorithm, the sequence $\op{E}_0(\var0_n)$ is not monotone but the magnitude of oscillation is guaranteed to decay in time. This behaviour is lost in Theorem~\ref{thm: stronger exponential FISTA convergence}. Although we do not prove it here, it can be shown that the stronger condition 
	\begin{equation}
		\tilde{\var2}_k \in \F{U}^{n_k}\cap\F{U}^{n_k+1} \cap\ldots\cap \F{U}^{n_{k+1}-1}\cap\ldots\cap \F{U}^N
	\end{equation}
	is sufficient to restore the stronger last-iterate guarantee on $\op{E}_0(\var0_N)$. Again, monotonicity of $\F{U}^n$ corresponds with improved stability of Algorithm~\ref{alg: refining FISTA}.
	
	To enable a more practical implementation of Theorem~\ref{thm: stronger exponential FISTA convergence}, the following lemma describes several refinement strategies which provide sufficient condition for $\op{E}_0(\tilde{\var2}_k) \lesssim \aE^{-k}$.
	\begin{lemma}\label{thm: practical refinement criteria}
		\Copy{thm: practical refinement criteria}{
			Let $(\tilde{\var2}_k)_{k\in\F N}$ be a sequence in $\F H$ with $\norm{\tilde{\var2}_k}\lesssim \aU^k$. Suppose $\tilde{\var2}_k\in\tilde{\F U}^k\coloneqq\F{U}^{n_k}$ and denote $\op{E}_0(\tilde{\F{U}}^k) \coloneqq \inf_{\var0\in\tilde{\F U}^k}\op{E}_0(\var0)$. Any of the following conditions are sufficient to show that $\tilde{\var2}_k$ is an \aUaE:
			\begin{enumerate}
				\item Small continuous gap refinement: $ \op{E}_0(\tilde{\var2}_k)\leq \vars1\aE^{-k}$ for all $k\in\F N$, some $\vars1>0$.
				\item Small discrete gap refinement: $\op{E}_0(\tilde{\F{U}}^k)\leq \vars1\aE^{-k}$ and $ \op{E}_0(\tilde{\var2}_k)-\op{E}_0(\tilde{\F{U}}^{k-1})\leq \vars1\aE^{-k}$ for all $k>0$, some $\vars1>0$.
			\end{enumerate}
			Otherwise, suppose there exists a Banach space $(\F{U},\Norm\cdot)$ which contains each $\tilde{\F{U}}^k$, $\sup_{k\in\F N}\Norm{\tilde{\var2}_k}<\infty$, and the sublevel sets of $\op{E}$ are $\Norm\cdot$-bounded. With the subdifferential $\partial \op{E}\colon\F U\rightrightarrows\F U^*$,  it is also sufficient if either:
			\begin{enumerate}\setcounter{enumi}{2}
				\item Small continuous gradient refinement: $\sup_{\var0\in\F{U}} \inf_{\var1\in\partial\op{E}(\tilde{\var2}_k)} \frac{|\IP{\var1}{\var0}|}{\Norm{\var0}}\leq \vars1\aE^{-k}$ for all $k\in\F N$, some $\vars1>0$.
				\item Small discrete gradient refinement: $\op{E}_0(\tilde{\F{U}}^k)\leq \vars1\aE^{-k}$ and $\sup_{\var0,\tilde{\var2}\in \tilde{\F{U}}^k}\inf_{\var1\in\F V^k}\frac{|\IP{v}{\var0-\tilde{\var2}}|}{\Norm{\var0-\tilde{\var2}}}\leq \vars1\aE^{-k}$ for all $k\in\F N$, some $\vars1>0$, where $\F V^k\coloneqq \partial(\op{E}|_{\tilde{\F{U}}^k})(\tilde{\var2}_k)$.
			\end{enumerate}
		}
	\end{lemma}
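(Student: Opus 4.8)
The plan is to verify the four conditions separately, observing first that the norm bound $\norm{\tilde{\var2}_k}\lesssim\aU^k$ is already part of the hypothesis, so by Definition~\ref{def: exp subspaces} the only thing left to establish in each case is the energy estimate $\op{E}_0(\tilde{\var2}_k)\lesssim\aE^{-k}$. Conditions~1 and~2 cost essentially nothing: (1) is the conclusion itself, and for (2) I would split
\[
\op{E}_0(\tilde{\var2}_k)=\bigl(\op{E}_0(\tilde{\var2}_k)-\op{E}_0(\tilde{\F U}^{k-1})\bigr)+\op{E}_0(\tilde{\F U}^{k-1})\leq\vars1\aE^{-k}+\vars1\aE^{-(k-1)},
\]
using the second hypothesis at index $k$ and the first at index $k-1$; since $\aE\geq1$ the right-hand side equals $\vars1(1+\aE)\aE^{-k}$, which is $\lesssim\aE^{-k}$ (the missing index $k=0$ being irrelevant for $\lesssim$).

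Conditions~3 and~4 I would handle by one common argument anchored on the subgradient inequality. Fix $k$; the stated bound can hold only if the relevant subdifferential ($\partial\op{E}(\tilde{\var2}_k)$ in case~3, $\F V^k$ in case~4) is non-empty, so $\tilde{\var2}_k$ lies in the effective domain of $\op{E}$. Choose a minimising sequence $(\var0_j)_j$ --- for $\op{E}$ over $\F U$ in case~3, over $\tilde{\F U}^k$ in case~4 --- so that $\op{E}_0(\var0_j)\to0$, respectively $\op{E}_0(\var0_j)\to\op{E}_0(\tilde{\F U}^k)$. For each $j$, select a subgradient $\var1_j$ realising (up to a harmless constant factor) the inner infimum in the \emph{single direction} $\tilde{\var2}_k-\var0_j$, so that $|\IP{\var1_j}{\tilde{\var2}_k-\var0_j}|\lesssim\aE^{-k}\Norm{\tilde{\var2}_k-\var0_j}$. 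Testing the subgradient inequality at $\tilde{\var2}_k$ against $\var0_j$ gives $\op{E}(\tilde{\var2}_k)-\op{E}(\var0_j)\leq\IP{\var1_j}{\tilde{\var2}_k-\var0_j}$, so after adding $\op{E}_0(\var0_j)=\op{E}(\var0_j)-\Emin$ to both sides,
\[
\op{E}_0(\tilde{\var2}_k)\leq|\IP{\var1_j}{\tilde{\var2}_k-\var0_j}|+\op{E}_0(\var0_j)\lesssim\aE^{-k}\Norm{\tilde{\var2}_k-\var0_j}+\op{E}_0(\var0_j).
\]
It then remains to bound $\Norm{\tilde{\var2}_k-\var0_j}$ by a constant independent of $k$ and $j$: this follows from $\sup_k\Norm{\tilde{\var2}_k}<\infty$ together with the sublevel-set hypothesis applied along $(\var0_j)_j$, whose energies can be capped by a threshold not depending on $k$ (using $\op{E}_0(\tilde{\F U}^k)\leq\vars1$ in case~4). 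Letting $j\to\infty$ then yields $\op{E}_0(\tilde{\var2}_k)\lesssim\aE^{-k}$, with the residual $\op{E}_0(\tilde{\F U}^k)$ absorbed into the constant in case~4.

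The main obstacle --- indeed the only delicate point --- is the order of quantifiers in conditions~3 and~4: the hypothesis is of the form $\sup_{\var0}\inf_{\var1}$, so no single subgradient of $\tilde{\var2}_k$ can be used simultaneously in every test direction; the direction $\tilde{\var2}_k-\var0_j$ must be frozen \emph{before} the subgradient is chosen, which is exactly why one argues against a minimising sequence rather than one fixed competitor. Two smaller points to check are that the sublevel-set bound is genuinely uniform in $k$ along $(\var0_j)_j$ (it is, since the admissible energy threshold may be taken $k$-independent) and that $\Emin$ coincides with the infimum implicit in the Banach-space subdifferential $\partial\op{E}\colon\F U\rightrightarrows\F U^*$, which is covered by the standing assumptions of this section.
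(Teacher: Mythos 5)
Your proposal is correct and follows essentially the same route as the paper: cases (1)--(2) are immediate arithmetic, and cases (3)--(4) combine the subgradient inequality with a minimising sequence (over $\F U$ or $\tilde{\F U}^k$) whose $\Norm\cdot$-distance to $\tilde{\var2}_k$ is bounded by a $k$-independent radius coming from the sublevel-set assumption and $\sup_k\Norm{\tilde{\var2}_k}<\infty$. The quantifier-order point you flag is handled identically in the paper, which simply takes the infimum over subgradients after fixing the test direction, since the subgradient inequality holds for every element of $\partial\op{E}(\tilde{\var2}_k)$.
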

	
	\noindent The proof is given in Lemma~\ref{app:thm: practical refinement criteria}. The refinement criteria described by Lemma~\ref{thm: practical refinement criteria} can be split into two groups. Cases (1) and (3) justify that any choice of $\F{U}^{n_k}$ satisfies the required conditions, so long as $\tilde{\var2}_k\in\F{U}^{n_k}$. In cases (2) and (4), $\tilde{\var2}_k$ is sufficient to choose the refinement time $n_k$, but an apriori bound is required on $\op{E}_0(\tilde{\F{U}}^k)$. In these cases one could, for example, choose $\tilde{\F{U}}^k$ to be a uniform discretisation with a priori estimates.
	
	Another splitting of the criteria is into gap and gradient computations. Typically, gradient norms (in (4) and (5)) should be easier to estimate than function gaps because they only require local knowledge rather than global, i.e. $\partial\op{E}(\var0_n)$ rather than an estimate of $\Emin$. Implicitly, the global information comes from an extra condition on $\op{E}$ to assert that sublevel sets are bounded.
	
	\section{General examples}\label{sec: general examples}
	We consider the main use of Algorithm~\ref{alg: refining FISTA} to be where there exists a Banach space $(\F{U},\Norm\cdot)$ such that $\F{U}\supset\{\var0\in\F{H}\st\op{E}(\var0)<\infty\}$ and
	$$\inf_{\var0\in\F H} \op{E}(\var0) = \min_{\var0\in\F{U}}\op{E}(\var0) = \op{E}(\var0^*)$$
	for some $\var0^*\in\F{U}$. The cases where $\F H$ has finite dimension or is separable are more straightforward; if the total number of refinements is finite (i.e. $\F{U}^n = \F{U}^N$ for all $n\geq N$, some $N\in\F N$), then $\aU = 1$. This holds for most finite dimensional problems as well as the countable example discussed in detail in Section~\ref{sec: Lasso definition}. In this section we give explicit computations of $\aU$ and $\aE$ in the setting where $\F H = L^2(\Domain)$ for some domain $\Domain\subset\R^d$ and the subsets $\F{U}^n$ will be finite dimensional finite-element--like spaces, as defined below.
	
	\begin{definition}\label{def: finite element space}
		Suppose $\norm\cdot_q\lesssim \Norm\cdot$ (i.e. $\F{U}\subset L^q(\Domain)$) for some $q\in[1,\infty]$ and connected, bounded, measurable domain $\Domain\subset\R^d$. We say that a collection $\F M$ is a \emph{mesh} if 
		$$\bigcup_{\domain\in\F M}\domain \supset  \Domain \qquad\text{and} \qquad |\domain\cap\domain'| = 0\qquad \text{for all $\domain,\domain'\in\F M,\ \domain\neq\domain'$.}$$
		Furthermore, we say a sequence of meshes $(\F M^k)_{k\in\F N}$ is \emph{consistent} if there exists $\domain_0\subset\Domain$ such that
		$$ \forall\domain\in\F{M}^k \quad \exists (\vars0_\domain,\vvars1_\domain)\in\R^{d\times d}\times\R^d \quad\text{such that}\quad \vec{x}\in\domain_0\iff \vars0_\domain\vec{x}+\vvars1_\domain\in\domain. $$
		Fix $\meshsize\in(0,1)$, linear subspaces $\tilde{\F{U}}^k\subset\F{H}$, and consistent meshes $\F M^k$. We say that the sequence $(\tilde{\F{U}}^k)_{k\in\F N}$ is an \emph{$\meshsize$-refining sequence of finite element spaces} if there exists $c_{\vars0}>0$ such that:
		$$ \forall (\tilde{\var0},\domain)\in \tilde{\F{U}}^k\times\F{M}^k,\quad \op{det}(\vars0_\domain)\geq c_{\vars0}\meshsize^{kd} \quad\text{and}\quad \exists \var0\in\tilde{\F U}^0 \quad\text{such that}\quad \forall\vec{x}\in\domain_0,\ \var0(\vec{x})=\tilde{\var0}(\vars0_\domain\vec{x}+\vvars1_\domain).$$
		We say that $(\tilde{\F{U}}^k)_{k\in\F N}$ is \emph{of order $p$} if for any $\var0^*\in\argmin_{\var0\in\F U}\op{E}(\var0)$ there exists a sequence $(\tilde{\var2}_k)_{k\in\F N}$ such that
		\begin{equation}\label{eq: projection of u*}
			\forall k\in\F N, \qquad\tilde{\var2}_k\in\tilde{\F U}^k  \quad\text{and}\quad \Norm{\tilde{\var2}_k-\var0^*}\lesssim_{\var0^*} \meshsize^{kp}.
		\end{equation}
		
		We allow the implicit constant to have any dependence on $\var0^*$ so long as it is finite. For example, in the case of Sobolev spaces we would expect an inequality of the form $\norm{\tilde{\var2}_k-\var0^*}_{W^{0,2}} \lesssim \meshsize^{kp}\norm{\var0^*}_{W^{p,2}}$ \cite{Strang1972}.
	\end{definition}
	\begin{remark}
		To clarify this definition with an example, suppose we wish to approximate $L^q(\Domain)$ with piecewise linear finite elements with a triangulated mesh. Then, $\domain_0\subset\Domain$ is a single triangle of diameter $O(\meshsize)$ and all meshes $\F M^k$ must be triangulations of $\Domain$ with cell volumes scaling no faster than $O(\meshsize^{kd})$. The function $\var0$ from the $\meshsize$-refining property is an arbitrary linear element, so that each $\var0\in\tilde{\F U}^k$ is linear on each $\domain\in\F M^k$, which leads to an order $p=2$ if $\var0^*\in W^{1,2}(\Domain)$.
	\end{remark}
	
	We note that any piecewise polynomial finite element (or spline) space can be used to form a $\meshsize$-refining sequence of subspaces. Wavelets with a compactly supported basis behave like a multi-resolution finite element space as there is always overlap in the supports of basis vectors. Similarly, a Fourier basis does satisfy the scaling properties, but each basis vector has global support. Both of these exceptions are important and could be accounted for with further analysis but we focus on the more standard finite element case. 
	In order to align these discretisation properties with the assumptions of Theorems~\ref{thm: exponential FISTA convergence} and \ref{thm: stronger exponential FISTA convergence}, we make the following observation.
	\begin{lemma}\label{thm: pq to p'q'}
		Fix $\var0^*\in\argmin_{\var0\in\F U}\op{E}(\var0)$ and $p',q'>0$. If a sequence $\tilde{\var2}_k\in\F H$ satisfies
		$$ \norm{\tilde{\var2}_k}\lesssim \meshsize^{-kq'} \quad\text{and}\quad\op{E}(\tilde{\var2}_k) - \op{E}(\var0^*)\lesssim \meshsize^{kp'},$$
		then $(\tilde{\var2}_k)_{k\in\F N}$ is an \aUaE\ for $\aU=\meshsize^{-q'}$ and $\aE=\meshsize^{-p'}$.
	\end{lemma}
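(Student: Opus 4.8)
The plan is to unfold Definition~\ref{def: exp subspaces} and match exponents; the lemma is essentially a change of notation. The one structural fact it relies on is the standing hypothesis of Section~\ref{sec: general examples}, namely that the infimum $\Emin[\tilde{\var0}]$ appearing in the definition of $\op{E}_0$ is attained (in $\F U$, possibly outside $\F H$) at $\var0^*$, so that $\op{E}_0(\var0) = \op{E}(\var0) - \Emin[\tilde{\var0}] = \op{E}(\var0) - \op{E}(\var0^*)$ for every $\var0\in\F H$. With this identification the hypothesis $\op{E}(\tilde{\var2}_k) - \op{E}(\var0^*)\lesssim\meshsize^{kp'}$ is literally a bound on $\op{E}_0(\tilde{\var2}_k)$.

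First I would set $\aU\coloneqq\meshsize^{-q'}$ and $\aE\coloneqq\meshsize^{-p'}$ and check the admissibility constraint $\aU,\aE\geq1$ required by Definition~\ref{def: exp subspaces}: since $\meshsize\in(0,1)$ and $p',q'>0$ we have $\meshsize^{-q'}>1$ and $\meshsize^{-p'}>1$. Next I would rewrite the two hypotheses as powers of these constants: $\norm{\tilde{\var2}_k}\lesssim\meshsize^{-kq'} = (\meshsize^{-q'})^{k} = \aU^{k}$, and, using the identification above, $\op{E}_0(\tilde{\var2}_k) = \op{E}(\tilde{\var2}_k) - \op{E}(\var0^*)\lesssim\meshsize^{kp'} = (\meshsize^{-p'})^{-k} = \aE^{-k}$, both for all $k\in\F N$ with the implicit constants inherited from the hypotheses. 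These are exactly the two defining estimates of an \aUaE, which proves the claim.

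I do not expect a genuine obstacle: the argument is a few lines of exponent bookkeeping whose purpose is to translate finite-element approximation rates---powers of the mesh size $\meshsize$, as produced by Definition~\ref{def: finite element space}---into the abstract constants $\aU,\aE$ used in Theorems~\ref{thm: exponential FISTA convergence} and~\ref{thm: stronger exponential FISTA convergence}. The one place where the structure of Section~\ref{sec: general examples} enters rather than pure algebra is the reduction $\Emin[\tilde{\var0}]=\op{E}(\var0^*)$; for the $\op{E}_0(\tilde{\var2}_k)\lesssim\aE^{-k}$ conclusion one in fact only needs $\Emin[\tilde{\var0}]\ge\op{E}(\var0^*)$, which holds as soon as the finite-energy slice of $\F H$ sits inside $\F U$, but the full equality is the natural setting and I would state the lemma under it as written.
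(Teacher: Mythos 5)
Your proof is correct and follows exactly the route the paper takes: the paper gives no formal proof, remarking only that the lemma ``is precisely rewriting the statement of Definition~\ref{def: exp subspaces} into terms of resolution $\meshsize$,'' which is the exponent bookkeeping you carry out, together with the identification $\op{E}_0(\tilde{\var2}_k)=\op{E}(\tilde{\var2}_k)-\op{E}(\var0^*)$ from the standing hypothesis of Section~\ref{sec: general examples}. Your explicit check that $\aU,\aE\geq1$ (needed for Definition~\ref{def: exp subspaces}) is a detail the paper leaves implicit, and is a welcome addition.
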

	This is precisely rewriting the statement of Definition~\ref{def: exp subspaces} into terms of resolution $\meshsize$. The following theorem links $p$ and $q$ from Definition~\ref{def: finite element space} with $p'$ and $q'$ from Lemma~\ref{thm: pq to p'q'}.
	
	\begin{theorem}\label{thm: generic a_U and a_E}
		Suppose $\F H= L^2(\Domain)$ for some connected, bounded domain $\Domain\subset\R^d$ and $\norm\cdot_q\lesssim \Norm\cdot$ for some $q\in[1,\infty]$. For $p\geq0$ and $\meshsize\in(0,1)$, if $(\tilde{\F{U}}^k)_{k\in\F N}$ is an $\meshsize$-refining sequence of finite element spaces of order $p$, then $(\tilde{\var2}_k)_{k\in\F N}$ is an \aUaE\ for
		\begin{align*}
			\aU &\leq \splitln{1}{\text{if }q\geq 2,}{\sqrt{\meshsize^{-d}}}{q<2 \text{ and }\sup_{\var0\in\tilde{\F{U}}^0}\frac{\norm{\var0}_{L^\infty(\domain_0)}}{\norm{\var0}_{L^2(\domain_0)}}<\infty},
			& \aE &\geq \begin{cases}
				\meshsize^{-2p} & \text{if }\nabla\op{E} \text{ is $\Norm\cdot$-Lipschitz at }\var0^*,
				\\\meshsize^{-p}\qquad & \text{if }\op{E} \text{ is $\Norm\cdot$-Lipschitz at }\var0^*,
				\\ 1 &\text{otherwise.}
			\end{cases}
		\end{align*}
	\end{theorem}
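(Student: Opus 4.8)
The plan is to reduce the statement, via Lemma~\ref{thm: pq to p'q'} (and directly from Definition~\ref{def: exp subspaces} in the boundary cases $\aU=1$ or $\aE=1$), to two scaling estimates on the approximating sequence $(\tilde{\var2}_k)_{k\in\F N}$ supplied by the order-$p$ property \eqref{eq: projection of u*}: a growth bound $\norm{\tilde{\var2}_k}\lesssim\meshsize^{-kq'}$ on its $L^2(\Domain)$-norm and a decay bound $\op{E}(\tilde{\var2}_k)-\op{E}(\var0^*)\lesssim\meshsize^{kp'}$ on its energy gap, after which $\aU=\meshsize^{-q'}$ and $\aE=\meshsize^{-p'}$ are admissible; since enlarging $\aU$ or shrinking $\aE$ only weakens the conclusion it is enough to establish the exponents named in the three cases. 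The only quantitative input is $\Norm{\tilde{\var2}_k-\var0^*}\lesssim_{\var0^*}\meshsize^{kp}$ from Definition~\ref{def: finite element space}, which gives $\sup_k\Norm{\tilde{\var2}_k}<\infty$.

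The energy gap is essentially bookkeeping once we observe $0\in\partial\op{E}(\var0^*)$, which holds because $\var0^*$ minimises $\op{E}$ over $\F U$. If $\nabla\op{E}$ is $\Norm\cdot$-Lipschitz at $\var0^*$, the descent inequality at $\var0^*$ (where the gradient vanishes) gives $\op{E}(\tilde{\var2}_k)-\op{E}(\var0^*)\lesssim\Norm{\tilde{\var2}_k-\var0^*}^2\lesssim\meshsize^{2kp}$, so $p'=2p$. If only $\op{E}$ is $\Norm\cdot$-Lipschitz at $\var0^*$, then directly $\op{E}(\tilde{\var2}_k)-\op{E}(\var0^*)\lesssim\Norm{\tilde{\var2}_k-\var0^*}\lesssim\meshsize^{kp}$, so $p'=p$. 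Otherwise one uses only that $\op{E}$ is finite and locally bounded near $\var0^*$, which with $\tilde{\var2}_k\to\var0^*$ in $\Norm\cdot$ gives $\op{E}(\tilde{\var2}_k)-\op{E}(\var0^*)\lesssim 1$, i.e. $p'=0$ and $\aE=1$.

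For the $L^2$-norm the case $q\geq 2$ is immediate: $\Domain$ bounded gives $L^q(\Domain)\hookrightarrow L^2(\Domain)$, so with $\norm\cdot_q\lesssim\Norm\cdot$ we get $\norm{\tilde{\var2}_k}\lesssim\norm{\tilde{\var2}_k}_q\lesssim\Norm{\tilde{\var2}_k}\lesssim 1$ and $\aU=1$. The substance is the case $q<2$, which I would treat cell by cell. On the reference cell the interpolation bound $\norm{\var0}_{L^2(\domain_0)}^2\leq\norm{\var0}_{L^\infty(\domain_0)}^{2-q}\norm{\var0}_{L^q(\domain_0)}^q$ together with the hypothesis $C_0\coloneqq\sup_{\var0\in\tilde{\F U}^0}\norm{\var0}_{L^\infty(\domain_0)}/\norm{\var0}_{L^2(\domain_0)}<\infty$ yields $\norm{\var0}_{L^2(\domain_0)}\lesssim\norm{\var0}_{L^q(\domain_0)}$ for every $\var0\in\tilde{\F U}^0$. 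By the $\meshsize$-refining property, the restriction of $\tilde{\var2}_k$ to any $\domain\in\F M^k$ is the pushforward under $\vec x\mapsto\vars0_\domain\vec x+\vvars1_\domain$ of an element of $\tilde{\F U}^0$, and an affine change of variables gives $\norm\cdot_{L^r(\domain)}=\op{det}(\vars0_\domain)^{1/r}\norm\cdot_{L^r(\domain_0)}$; feeding this (for $r=2$ and $r=q$) into the reference-cell inequality, and using $\op{det}(\vars0_\domain)\geq c_{\vars0}\meshsize^{kd}$ with $\tfrac12-\tfrac1q<0$, produces $\norm{\tilde{\var2}_k}_{L^2(\domain)}\lesssim\meshsize^{kd(1/2-1/q)}\norm{\tilde{\var2}_k}_{L^q(\domain)}$. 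Summing over $\domain\in\F M^k$ and using super-additivity of $t\mapsto t^{2/q}$ on $[0,\infty)$ (valid as $2/q\geq 1$), i.e. $\sum_{\domain}\norm{\tilde{\var2}_k}_{L^q(\domain)}^2\leq\big(\sum_{\domain}\norm{\tilde{\var2}_k}_{L^q(\domain)}^q\big)^{2/q}=\norm{\tilde{\var2}_k}_{L^q(\Domain)}^2$, I obtain $\norm{\tilde{\var2}_k}\lesssim\meshsize^{kd(1/2-1/q)}\norm{\tilde{\var2}_k}_q\lesssim\meshsize^{-kd(1/q-1/2)}$; finally $q\geq 1$ forces $\tfrac1q-\tfrac12\leq\tfrac12$, hence $\meshsize^{-kd(1/q-1/2)}\leq(\sqrt{\meshsize^{-d}})^{k}$ and $\aU=\sqrt{\meshsize^{-d}}$.

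The main obstacle is the $q<2$ estimate: one has to track correctly how the $L^q$ and $L^2$ norms scale under the affine cell maps, apply the uniform determinant lower bound in the right direction, and pass from the cellwise bounds to a global one through the embedding $\ell^{2/q}\hookrightarrow\ell^1$ (which is exactly where $q\leq 2$ enters). A secondary, mostly cosmetic, point is making the regularity hypotheses on $\op{E}$ at $\var0^*$ precise --- in particular that the relevant first-order quantity vanishes there because $\var0^*$ minimises over $\F U$, and that the ``otherwise'' branch tacitly needs $\op{E}$ to be finite and locally bounded at $\var0^*$ so that the energy gap along $(\tilde{\var2}_k)$ eventually stays finite.
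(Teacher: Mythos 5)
Your proposal is correct, and its overall architecture (reduce to Lemma~\ref{thm: pq to p'q'}, then bound the $L^2$-growth and the energy decay of $(\tilde{\var2}_k)_{k\in\F N}$ separately) matches the paper's, as does the energy part verbatim: Lipschitz $\op{E}$ gives $p'=p$, Lipschitz $\nabla\op{E}$ plus first-order optimality at $\var0^*$ gives $p'=2p$ (the paper phrases this via convexity and $\IP{\nabla\op{E}(\tilde{\var2}_k)-\nabla\op{E}(\var0^*)}{\tilde{\var2}_k-\var0^*}$ rather than the descent lemma, but it is the same estimate), and your remark that the ``otherwise'' branch tacitly needs boundedness of $\op{E}_0(\tilde{\var2}_k)$ is a fair observation the paper also leaves implicit. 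Where you genuinely diverge is the inverse inequality for $q<2$. The paper (Lemmas~\ref{app:thm: generic a_U bound} and~\ref{app:thm: specific a_U bound}) writes $\norm{\tilde{\var2}_k}=\IP{\tilde{\var2}_k}{\tilde{\var2}_k}/\norm{\tilde{\var2}_k}$, applies H\"older globally to reduce everything to the single quantity $\sup_{\tilde{\var0}\in\tilde{\F U}^k}\norm{\tilde{\var0}}_{L^\infty}/\norm{\tilde{\var0}}_{L^2}$, and then scales that ratio back to the reference cell, picking up $\op{det}(\vars0_\domain)^{-1/2}\lesssim\meshsize^{-kd/2}$. You instead prove the reference-cell equivalence $\norm\cdot_{L^2(\domain_0)}\lesssim\norm\cdot_{L^q(\domain_0)}$ via the interpolation bound $\norm{\var0}_{L^2}^2\leq\norm{\var0}_{L^\infty}^{2-q}\norm{\var0}_{L^q}^q$, transport it cell by cell with the affine scaling of $L^r$-norms, and reassemble through $\sum_\domain b_\domain^{2/q}\leq(\sum_\domain b_\domain)^{2/q}$. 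Both are correct under the same hypothesis $\sup_{\var0\in\tilde{\F U}^0}\norm{\var0}_{L^\infty(\domain_0)}/\norm{\var0}_{L^2(\domain_0)}<\infty$; your route yields the slightly sharper exponent $\aU=\meshsize^{-d(1/q-1/2)}\leq\sqrt{\meshsize^{-d}}$ for $q\in(1,2)$, while the paper's is shorter, avoids the cellwise summation, and isolates a reusable global norm-equivalence statement independent of the mesh structure.
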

	The proof of this theorem is in Appendix~\ref{app: generic a_U and a_E}. Note that $\sup_{\var0\in\tilde{\F{U}}^0}\norm{\var0}_{L^\infty(\domain_0)}\norm{\var0}_{L^2(\domain_0)}^{-1}$ is finite whenever $\tilde{\F{U}}^0\subset L^\infty(\Domain)$ is finite dimensional, so this is not a very strong assumption. The main take-home for this theorem is that the computation of $\aU$ and $\aE$ is typically very simple and clear given a particular choice of $\Norm\cdot$ and $\op{E}$. We also briefly remark that the Lipschitz constants in this lemma do not need to be valid globally, only on the sequence $\tilde{\var2}_k$. The same result holds under a local-Lipschitz assumption, for example on the ball of radius $\sup_{k\in\F N}\Norm{\tilde{\var2}_k}$ which is finite whenever $p\geq0$.
	
	
	\section{L1 penalised reconstruction}\label{sec: Lasso definition}
	The canonical example for FISTA is the LASSO problem with a quadratic data fidelity and L1 regularisation. In this section we develop the necessary analytical tools for the variant with general smooth fidelity term which will be used for numerical results in Section~\ref{sec: numerics}. We consider three forms which will be referred to as the continuous, countable, and discrete problem depending on whether the space $\F{U}$ is $\C M([0,1]^d)$, $\ell^1(\R)$, or $\R^M$ respectively. We choose $\F H$ to be $L^2([0,1]^d),\ \ell^2(\R),$ or $\R^M$ correspondingly. Let $\A\colon\F{U}\cap\F{H}\to\R^m$ be a linear operator represented by the kernels $\psi_j\in\F{H}$ such that 
	\begin{equation}\label{eq: kernels of A}
		\forall \var0\in\F{U}\cap\F{H},\ j=1,\ldots,m,\qquad (\A\var0)_j = \IP{\psi_j}{\var0}.
	\end{equation}
	In the continuous case we will assume the additional smoothness $\psi_j\in C^1([0,1]^d)$. In Section~\ref{sec: smoothing operators} we will formally define and estimate several operator semi-norms for $\A$ of this form, for example Lemma~\ref{thm: norm bound L2} confirms that $\A$ is continuous on $\F{H}$ (without loss of generality $\norm{\A}\leq 1$). In each case, the energy we consider is written as 
	\begin{equation}\label{eq: Lasso energy}
		\op{E}(\var0) = \op{f}(\A\var0-\data) + \mu\Norm{\var0}
	\end{equation}
	for some $\mu>0$ where $\Norm\cdot=\norm\cdot_1$. We assume $\op{f}\in C^1(\R^m)$ is convex, bounded from below, and $\nabla\op{f}$ is 1-Lipschitz.
	Let $\var0^*\in\argmin_{\var0\in\F{U}}\op{E}(\var0)$, which is non-empty so long as $\psi_j\in C([0,1]^d)$, see the proof of \cite[Prop. 3.1]{Bredies2013} when $\op{f}$ is quadratic.
	
	The aim of this section is to develop all of the necessary tools for implementing Algorithm~\ref{alg: refining FISTA} on the energy \eqref{eq: Lasso energy} using the convergence guarantees of either Theorem~\ref{thm: exponential FISTA convergence} or Theorem~\ref{thm: stronger exponential FISTA convergence}. This includes computing the rates $\aU$ and $\aE$, estimating the continuous gap $\op{E}_0(\var0_n)$, and developing an efficient refinement choice for $\F{U}^n$. Below we will just describe the form of $\tilde{\F U}^k$ under the assumption that $\F U^n\subset\tilde{\F U}^k$ is chosen adaptively for $n=n_{k-1}+1,\ldots,n_{k}$. The index $k$ refers to the scale or resolution and $n$ refers to the iteration number of the reconstruction algorithm.
	
	\subsection{Continuous case}\label{sec: continuous lasso rate}
	We start by estimating rates in the case $\F{U}=\C M(\Domain)$ where $\Domain =[0,1]^d$. In this case we choose $\tilde{\F{U}}^k$ to be the span of all piecewise constant functions on a mesh of squares with maximum side length $2^{-k}$ (i.e. $\meshsize=\tfrac12$) and $$\tilde{\var2}_k\coloneqq \sum_{\domain\in\F M^k}\frac{\var0^*(\domain)}{|\domain|}\1_\domain\quad\text{where}\quad\1_\domain(\vec x)=\splitln{1}{\vec{x}\in\domain}{0}{\text{else}}.$$
	By construction $\tilde{\var2}_k\in\tilde{\F{U}}^k$, however note that for any $\var0\in L^1(\Domain)$ and Dirac mass $\delta$ supported in $(0,1)^d$, 
	\begin{equation}
		\Norm{\var0-\delta} = \sup_{\varphi\in C(\Domain), \norm{\varphi}_{L^\infty}\leq 1} \IP{\varphi}{\var0-\delta} = \Norm{\var0}+\Norm{\delta} \geq 1 = \meshsize^0.
	\end{equation}
	Because of this, application of Theorem~\ref{thm: generic a_U and a_E} with $p=0$ gives $\aU=2^{\frac d2}$ but only $\aE\geq1$.
	To improve our estimate of $\aE$ requires additional assumptions on $\A$. Note that $\Norm{\tilde{\var2}_k}=\sum_{\domain\in\F M^k}|\var0^*(\domain)|\leq \Norm{\var0^*}$, therefore we have
	\begin{align}
		\op{E}(\tilde{\var2}_k)-\op{E}(\var0^*) &= \op{f}(\A\tilde{\var2}_k-\data)-\op{f}(\A\var0^*-\data) + \mu\left(\Norm{\tilde{\var2}_k}-\Norm{\var0^*}\right)
		\\&\leq \ip{\nabla \op{f}(\A\tilde{\var2}_k-\data)}{\A(\tilde{\var2}_k-\var0^*)}
		\\&\leq\left[\norm{\nabla\op{f}(\A\var0^*-\data)}_{\ell^2} + \norm{\A(\tilde{\var2}_k-\var0^*)}_{\ell^2}\right]\norm{\A(\tilde{\var2}_k-\var0^*)}_{\ell^2}.
	\end{align}
	as $\op{f}$ is convex with 1-Lipschitz gradient. Clearly $\norm{\nabla\op{f}(\A\var0^*-\data)}_{\ell^2}$ is a constant. For the other term, for all $\vec{r}\in\R^m$ denote $\varphi\coloneqq\A^*\vec{r}$, then note that
	\begin{equation}
		\ip{\vec r}{\A(\tilde{\var2}_k-\var0^*)} = \IP{\varphi}{\tilde{\var2}_k-\var0^*} = 
		\sum_{\domain\in\F M^k}\int_\domain \varphi(\vec{x}) \diff[\tilde{\var2}_k-\var0^*] 
		= \sum_{\domain\in\F M^k}|\domain|^{-1}\iint_{\domain^2} [\varphi(\vec{x}) - \varphi(\vec{y})] \diff\vec{x}\diff\var0^*(\vec{y}).
	\end{equation}
	With the pointwise bound $|\varphi(\vec{x}) - \varphi(\vec{y})|\leq \op{diam}(\domain)\norm{\nabla\varphi}_{L^\infty} = \sqrt{d}2^{-k}\norm{\nabla[\A^*\vec r]}_{L^\infty}$, we deduce the estimate
	\begin{equation}
		\norm{\A(\tilde{\var2}_k-\var0^*)}_{\ell^2} = \sup_{\vec{r}\in\R^m}\norm{\vec{r}}_{\ell^2}^{-1} \IP{\A^*\vec{r}}{\tilde{\var2}_k-\var0^*} \leq \sqrt{d}2^{-k}\Norm{\var0^*}\sup_{\vec{r}\in\R^m}\norm{\vec{r}}_{\ell^2}^{-1}\norm{\nabla[\A^*\vec{r}]}_{L^\infty}.
	\end{equation}
	In Lemma~\ref{thm: norm bound smoothness} we will show that this last term, which we denote the semi-norm $|\A^*|_{\ell^2\to C^1}$, is bounded by \\$\sqrt{m}\max_{j\in[m]}\norm{\nabla\Psi_j}_\infty$. We conclude that $\op{E}(\tilde{\var2}_k)-\op{E}(\var0^*)\lesssim 2^{-k}$.
	In particular, this computation confirms two things. Firstly that the scaling constant is $\aE = 2$, and secondly that the required smoothness to achieve a good rate with Algorithm~\ref{alg: refining FISTA} is that $\A^*\colon\R^m\to C^1(\Domain)$ is a bounded operator. This accounts for using the weaker topology of $\C M(\Domain)$ rather than $L^1(\Domain)$. 
	
	Inserting the computed rates into Theorem~\ref{thm: exponential FISTA convergence} or Theorem~\ref{thm: stronger exponential FISTA convergence} gives the guaranteed convergence rate
	\begin{equation}\label{eq: Lasso energy rate}
		\kappa = \frac{\log \aU^2}{\log \aE + \log \aU^2} = \frac{d}{1+d} \quad \implies \quad \op{E}(\var0_n)-\Emin \lesssim n^{-2(1-\kappa)} = n^{-\frac{2}{1+d}}.
	\end{equation}
	This rate can be used to infer the required resolution at each iteration, in particular on iteration $n$ with $n^2\simeq (\aE\aU^2)^k$ we expect the resolution to be
	\begin{equation}\label{eq: Lasso resolution rate}
		2^{-k} = \left(\aE\aU^2\right)^{\frac{k}{1+d}} \simeq n^{-\frac{2}{1+d}}.
	\end{equation}
	
	\subsection{Countable and discrete case}
	We now extend the rate computations to the case when $\F{U}=\ell^1(\R)$, or a finite dimensional subspace. The key fact here is that, even when $\F{U}$ is infinite dimensional, it is known (e.g. \cite[Thm 6]{Unser2016} and \cite[Cor 3.8]{Boyer2019}) that there exists $\var0^*\in\argmin_{\var0\in\F{U}}\op{E}(\var0)$ with at most $m$ non-zeros. If this is the case, then $\var0^*\in \ell^2(\R)$, indeed $\norm{\var0^*}_{\ell^2}\leq\sqrt{m}\norm{\var0^*}_{\ell^1}$. This makes the estimates of $\aE/\aU$ much simpler than in the continuous case as we can stay in the finite-dimensional Hilbert-space setting. 
	
	For countable dimensions we consider discretisation subspaces of the form 
	$$\tilde{\F{U}}^k = \{\var0\in \ell^1(\R) \st i\notin J_k\implies \var0_i=0\}$$ for some sets $J_k\subset\F N$, i.e. infinite vectors with finitely many non-zeros. The key change in analysis from the continuous case is $\norm{\var0^*}<\infty$, so $\aU=1$ and the expected rate of $n^{-2}$, independent of $\aE$ or any additional properties of $\A$. The number of refinements will also be finite, therefore $n_k=\infty$ for some $k$, the remaining conditions of Theorems~\ref{thm: exponential FISTA convergence} and~\ref{thm: stronger exponential FISTA convergence} hold trivially.
	
	\subsection{Refinement metrics}\label{sec: Lasso gap and gradient}
	Lemma~\ref{thm: practical refinement criteria} shows that adaptive refinement can be performed based on estimates of the function gap or the subdifferential. In this subsection we provide estimates for the forth case of Lemma~\ref{thm: practical refinement criteria} which can be easily computed. In this case we consider $\partial\op{E}\colon \F H\rightrightarrows\F H$ so that subdifferentials are well behaved, for example for explicit computation assuming validity of the chain/sum rules for differentiation.
	
	\subsubsection{Bounds for discretised functionals}\label{sec: bound discrete}
	We start by computing estimates for discretised energies. This covers the cases when either the continuous/countable energy is projected onto $\F{U}^n$, or $\F{U}$ is finite dimensional. For notation we will use the continuous case, to recover the other cases just replace continuous indexing with discrete (i.e. $\var0(\vec{x}) \leadsto \var0_i$).
	
	Let $\tens{\Pi}_n\colon\F H\to\F{U}^n$ denote the orthogonal projection. We consider the discretised function $\op{E}|_{\F{U}^n}\colon \F{U}^n\to\R$ and its subdifferential $\partial_n\op{E}(\cdot) = \tens{\Pi}_n\partial\op{E}(\cdot)$ on $\F{U}^n$. In our case, the behaviour of $\op{E}|_{\F{U}^n}$ is equivalent to replacing $\var0$ with $\tens{\Pi}_n\var0$, and $\A^*$ with $\tens{\Pi}_n\A^*$.
	
	\paragraph{Discrete gradient}
	We can use $\tens{\Pi}_n$ to compute the discrete subdifferential at $\var0_n\in\F{U}^n$:
	\begin{align}
		\partial_n \op{E}(\var0_n)(\vec{x}) &= [\tens{\Pi}_n\A^*\nabla\op{f}(\A\var0_n-\data)](\vec{x}) + \begin{cases}
			\{+\mu\} & \var0_n(\vec{x})>0 \\ [-\mu,\mu] & \var0_n(\vec{x})=0 
			\\\{-\mu\} & \var0_n(\vec{x})<0 
		\end{cases} \label{eq: sign long}
		\\&\eqqcolon [\tens{\Pi}_n\A^*\nabla\op{f}(\A\var0_n-\data)](\vec{x}) + \mu\tens{\Pi}_n\sign(\var0_n(\vec{x})) \label{eq: sign short}
	\end{align}
	where we define $s+\mu[-1,1] = [s-\mu,s+\mu]$ for all $s\in\R$, $\mu\geq0$. 
	
	As $\Norm\cdot = \norm\cdot_1$, the natural metric for $\partial_n\op{E}$ is $\Norm\cdot_* = \norm\cdot_\infty$ which we can estimate
	\begin{align}
		\Norm{\partial_n \op{E}(\var0_n)}_* &= \max_{\vec{x}\in\Domain}\min_{\var1}\left\{|\var1| \st \var1\in \tens{\Pi}_n\A^*\nabla\op{f}(\A\var0_n-\data)(\vec{x}) +  \mu\tens{\Pi}_n\sign(\var0_n(\vec{x}))\right\}
		\\&= \max_{\vec{x}\in\Domain}\begin{cases}
			|[\tens{\Pi}_n\A^*\nabla\op{f}(\A\var0_n-\data)(\vec{x}) + \mu| & \var0_n(\vec{x})>0
			\\|[\tens{\Pi}_n\A^*\nabla\op{f}(\A\var0_n-\data)(\vec{x}) - \mu| & \var0_n(\vec{x})<0
			\\\max\left(|\tens{\Pi}_n\A^*\nabla\op{f}(\A\var0_n-\data)(\vec{x})| - \mu, 0\right) & \var0_n(\vec{x}) = 0
		\end{cases}
	\end{align}
	which can be used directly in Lemma~\ref{thm: practical refinement criteria}.
	
	\paragraph{Discrete gap}
	We now move on to the discrete gap, $\op{E}(\var0_n)-\min_{\var0\in\F{U}^n}\op{E}(\var0)$. This can be computed with a dual representation (e.g. \cite{Duval2017a}),
	\begin{align}
		\min_{\var0\in \F{U}^n} \op{f}(\A\var0-\data) + \mu\Norm{\var0} &= \min_{\var0\in\F H}\max_{\vec{\varphi}\in\R^m} \ip{(\A\tens{\Pi}_n\var0-\data)}{ \vec{\varphi}} +\mu\Norm{\tens{\Pi}_n\var0} -\op{f}^*(\vec{\varphi})
		\\&= \max_{\vec{\varphi}\in\R^m}\min_{\var0\in\F H} \ip{(\A\tens{\Pi}_n\var0-\data)}{\vec{\varphi}} +\mu\Norm{\tens{\Pi}_n\var0} -\op{f}^*(\vec{\varphi})
		\\&= \max_{\vec{\varphi}\in\R^m} \splitln{-\ip{\data}{\vec{\varphi}} -\op{f}^*(\vec{\varphi})}{\qquad\Norm{\tens{\Pi}_n\A^*\vec{\varphi}}_*\leq \mu}{-\infty}{\qquad\text{else}}
		\\&= -\min_{\vec{\varphi}\in\R^m} \underbrace{\op{f}^*(\vec{\varphi})+\ip{\data}{\vec{\varphi}}}_{\eqqcolon \op{E}^\dag(\vec{\varphi})} + \chi(\Norm{\tens{\Pi}_n\A^*\vec{\varphi}}_*\leq \mu). \label{eq: E dagger}
	\end{align}
	In particular,
	\begin{equation}
		\op{E}(\var0) - \min_{\var0\in\F{U}^n}\op{E}(\var0) = \op{E}(\var0) + \min_{\vec{\varphi}\in\R^m\st \Norm{\tens{\Pi}_n\A^*\vec{\varphi}}_*\leq \mu} \op{E}^\dag(\vec{\varphi}) \leq \op{E}(\var0) + \op{E}^\dag(\vec{\varphi})
	\end{equation}
	for any feasible $\vec{\varphi}\in\R^m$. We further derive the criticality condition, if $(\var0^*,\vec{\varphi}^*)$ is a saddle point, then
	\begin{equation}
		\A\var0^*-\data\in\partial\op{f}^*(\vec{\varphi}^*),\qquad\text{or equivalently}\qquad
		\vec{\varphi}^* = \nabla\op{f}(\A\var0^*-\data).
	\end{equation}
	We remark briefly that $\op{E}^\dag$ should be thought of as the dual of $\op{E}$ but without the constraint. We choose to omit it here to highlight that it is only the constraint which changes between the discrete and continuous cases; the value of $\op{E}^\dag$ will remain the same.
	
	Given $\var0_n\in \F{U}^n$, the optimality condition motivates a simple rule for choosing $\vec{\varphi}$:
	\begin{equation}
		\vec{\varphi}_n\coloneqq \nabla\op{f}(\A\var0_n - \data), \qquad \op{E}(\var0)-\min_{\var0'\in\F{U}^n}\op{E}(\var0') \leq \op{E}(\var0) +\op{E}^\dag(\vars2\vec{\varphi}_n)
	\end{equation}
	for some $0\leq \vars2\leq\frac{\mu}{\Norm{\tens{\Pi}_n\A^*\vec{\varphi}_n}_*}$. In the case $\op{f}(\cdot)=\frac12\norm{\cdot}_{\ell^2}^2$, one can use the optimal choice
	\begin{equation}
		\vars2 =\max\left(0, \min\left(\frac{-\ip{\data}{\vec{\varphi}_n}}{\norm{\vec{\varphi}_n}_{\ell^2}^2}, \frac{\mu}{\Norm{\tens{\Pi}_n\A^*\vec{\varphi}_n}_*}\right)\right).
	\end{equation}
	To apply Algorithm~\ref{alg: refining FISTA}, we are assuming that both $\op{f}(\A\var0_n-\data)$ and $\tens{\Pi}_n\nabla\op{f}(\A\var0_n-\data)$ are easily computable, therefore $\vars2$ and $\op{E}(\var0_n) +\op{E}^\dag(\vars2\vec{\varphi}_n)$ are also easy to compute.
	
	\subsubsection{Bounds for countable functionals}
	Extending the results of Section~\ref{sec: bound discrete} to $\F{U}=\ell^1(\R)$ is analytically very simple but computationally relies heavily on the specific choice of $\A$. The computations of subdifferentials and gaps carry straight over replacing $\tens{\Pi}_n$ with the identity and adding the sets $J_n\subset\F N$ which define $\F{U}^n = \{\var0\in\ell^1\st i\notin J_n\implies \var0_i=0\}$. Recall that $\Norm{\partial\op{E}(\var0_n)}_* \coloneqq \inf_{s\in\op{sign}(\var0_n)} \Norm{\A^*\vec{\varphi}_n+\mu s}_*$ where the $\op{sign}$ function has the pointwise set-valued definition as indicated in \eqref{eq: sign long}-\eqref{eq: sign short}. Where $[u_n]_i=0$, the choice $s_i=\min(1,\max(-1,-\mu^{-1}[\A^*\vec{\varphi}_n]_i))$ achieves the minimal value
	\begin{align}
		\Norm{\partial\op{E}(\var0_n)}_* &= \max_{i\in\F N}\begin{cases}
			|[\A^*\vec{\varphi}_n]_i + \mu| & [\var0_n]_i>0
			\\|[\A^*\vec{\varphi}_n]_i - \mu| & [\var0_n]_i<0
			\\\max\left(|[\A^*\vec{\varphi}_n]_i| - \mu, 0\right) & [\var0_n]_i = 0
		\end{cases} \label{eq: pointwise gradient formula}
		\\ \op{E}(\var0_n)-\Emin &\leq \op{E}(\var0_n) + \op{E}^\dag(\vars2_0\vec{\varphi}_n), \qquad \vars2_0\in\left[0,\frac{\mu}{\Norm{\A^*\vec{\varphi}_n}_*}\right] \label{eq: discrete energy formula}
	\end{align}
	where $\vec{\varphi}_n = \nabla\op{f}(\A\var0_n-\data)\in\R^m$ is always exactly computable.
	
	In the countable case, the sets $J_n$ give a clear partition into known/unknown values in these definitions. For $i\in J_n$ the computation is the same as in Section~\ref{sec: bound discrete}, then for $i\notin J_n$ we know $[\var0_n]_i=0$ which simplifies the remaining computations. This leads to:
	\begin{align}
		\Norm{\partial\op{E}(\var0_n)}_* &= \max\left(\max_{i\in J_n}|[\partial\op{E}(\var0_n)]_i|,\ \sup_{i\notin J_n} |[\partial\op{E}(\var0_n)]_i|\right)
		= \max\left(\Norm{\partial_n\op{E}(\var0_n)}_*,\ \sup_{i\notin J_n} |[\A^*\vec{\varphi}_n]_i|-\mu\right)
		\\ \Norm{\A^*\vec{\varphi}_n}_* &= \max\left(\max_{i\in J_n}|[\A^*\vec{\varphi}_n]_i|,\ \sup_{i\notin J_n} |[\A^*\vec{\varphi}_n]_i|\right)
		\hspace{19pt}= \max\left(\Norm{\tens{\Pi}_n\A^*\vec{\varphi}_n}_*,\ \sup_{i\notin J_n} |[\A^*\vec{\varphi}_n]_i|\right).
	\end{align}
	Both estimates only rely on an upper bound of $\max_{i\notin J_n} |[\A^*\vec{\varphi}_n]_i|$. One example computing this value is seen in Section~\ref{sec: wavelet examples}.
	
	\subsubsection{Bounds for continuous functionals}\label{sec: bound continuous}
	Finally we extend the results of Section~\ref{sec: bound discrete} to continuous problems. Similar to the countable case \eqref{eq: pointwise gradient formula}-\eqref{eq: discrete energy formula}, the exact formulae can be written down immediately:
	\begin{align}
		\Norm{\partial\op{E}(\var0_n)}_* &= \max_{\vec{x}\in\Domain}\begin{cases}
			|[\A^*\vec{\varphi}_n](\vec{x}) + \mu| & \var0_n(\vec{x})>0
			\\|[\A^*\vec{\varphi}_n](\vec{x}) - \mu| & \var0_n(\vec{x})<0
			\\\max\left(|[\A^*\vec{\varphi}_n](\vec{x})| - \mu, 0\right) & \var0_n(\vec{x}) = 0
		\end{cases}
		\\ \op{E}(\var0_n)-\Emin &\leq \op{E}(\var0_n) + \op{E}^\dag(\vars2_0\vec{\varphi}_n), \qquad \vars2_0\in\left[0,\frac{\mu}{\Norm{\A^*\vec{\varphi}_n}_*}\right]
	\end{align}
	with $\op{E}^\dag$ as defined in \eqref{eq: E dagger}.
	Recall that there is a mesh $\F M^n$ corresponding to $\F{U}^n$ such that $\var0_n$ is constant on each $\domain\in\F M^n$, so we can rewrite these bounds:
	\begin{align}
		\Norm{\partial \op{E}(\var0_n)}_* &= \max_{\domain\in\F M^n} \begin{cases}
			\norm{\A^*\vec{\varphi}_n + \mu}_{L^\infty(\domain)} & \var0_n|_{\domain} > 0
			\\\norm{\A^*\vec{\varphi}_n - \mu}_{L^\infty(\domain)} & \var0_n|_{\domain} < 0
			\\\max(0, \norm{\A^*\vec{\varphi}_n}_{L^\infty(\domain)}-\mu) & \var0_n|_{\domain} = 0
		\end{cases}
		\\ \Norm{\A^*\vec{\varphi}_n}_* &= \max_{\domain\in\F M^n} \norm{\A^*\vec{\varphi}_n}_{L^\infty(\domain)}.
	\end{align}
	Now, both values can be estimated relying on pixel-wise supremum norms of $\A^*\vec{\varphi}_n$ which we have assumed is sufficiently smooth. We will therefore use a pixel-wise Taylor expansion to provide a simple and accurate estimate. For instance, let $\vec{x}_i$ be the midpoint of the pixel $\domain$, then 
	\begin{equation}\label{eq: Taylor approximation}
		\norm{\A^*\vec{\varphi}_n}_{L^\infty(\domain)} \leq |[\A^*\vec{\varphi}_n](\vec{x}_i)| + \frac{\op{diam}(\domain)}{2}|[\nabla\A^*\vec{\varphi}_n](\vec{x}_i)| + \frac{\op{diam}(\domain)^2}{8}|\A^*\vec{\varphi}_n|_{C^2}.
	\end{equation}
	In this work we chose a first order expansion because we are looking for extrema of $\A^*\vec{\varphi}_n$, i.e. we are most interested in the squares $\domain$ such that 
	\begin{equation}
		|[\A^*\vec{\varphi}_n](\vec{x}_i)| \approx \mu, \qquad |[\nabla\A^*\vec{\varphi}_n](\vec{x}_i)| \approx 0, \qquad [\nabla^2\A^*\vec{\varphi}_n](\vec{x}_i) \preceq 0. 
	\end{equation}
	A zeroth order expansion would be optimally inefficient (approximating $|[\nabla\A^*\vec{\varphi}_n](\vec{x}_i)|$ with $|\A^*\vec{\varphi}_n|_{C^1}$) and a second order expansion would possibly be more elegant but harder to implement. We found that a first order expansion was simple and efficient.
	
	The bounds presented here for continuous problems emphasise the twinned properties required for adaptive mesh optimisation. The mesh should be refined greedily to the structures of $\var0^*$, but also must be sufficiently uniform to provide a good estimate for $\op{E}(\var0^*)$. This is a classical exploitation/exploration trade-off; exploiting visible structure whilst searching for other structures which are not yet visible.

	\subsection{Support detection}\label{sec: support detection}
	The main motivation for using L1 penalties in applications is because it recovers sparse signals, in the case of compressed sensing the support of $\var0^*$ is also provably close to the ``true'' support \cite{Duval2017a,Poon2018}. If $\var0_n\approx \var0^*$ in the appropriate sense, then we should also be able to quantify the statement $\op{supp}(\var0_n)\approx\op{supp}(\var0^*)$. Such methods are referred to as \emph{safe screening} rules \cite{ElGhaoui2010} which gradually identify the support and allow the optimisation algorithm to constrain parts of the reconstruction to 0. In this subsection we propose a new simple screening rule which is capable of generalising to our continuous subspace approximation setting. It is likely that more advanced methods \cite{Bonnefoy2015,Ndiaye2017} can also be adapted, although that is beyond the scope of this work. The key difference is the allowance of inexact computations resulting from estimates such as \eqref{eq: Taylor approximation}.
	
	The support of $\var0^*$ has already been characterised very precisely \cite{Duval2017a,Poon2018}. In particular, the support is at most $m$ distinct points and are a subset of $\{ \vec{x}\in\Domain \st |\A^*\vec{\varphi}^*|(\vec{x}) = \mu\} $ (an equivalent statement holds for the countable case). Less formally, this can also be seen from the the subdifferential computations in Section~\ref{sec: Lasso gap and gradient}, for all $\vec{x}\in\supp(\var0^*)$ we have
	\begin{equation}
		0\in \partial \op{E}(\var0^*)(\vec{x}) = [\A^*\vec{\varphi}^*](\vec{x}) + \mu\sign(\var0^*(\vec{x})).
	\end{equation}
	
	Heuristically, we will use strong convexity of $\op{E}^\dag$ from \eqref{eq: E dagger} and smoothness of $\A^*$ to quantify the statement:
	$$ \text{if}\quad \op{E}(\var0_n) + \op{E}^\dag(\vars2_0 \vec{\varphi}_n) \approx 0 \quad \text{then}\quad \left\{\vec{x}\st |[\A^*\vec{\varphi}_n](\vec{x})|\ll \mu\right\} \subset\{\vec{x} \st \var0^*(\vec{x})=0\}.$$
	Recall that $\nabla\op{f}$ is 1-Lipschitz if and only if $\op{f}^*$ is 1-strongly convex \cite[Chapter 10, Thm. 4.2.2]{Hiriart2013}. Therefore, if $\vars2_0\vec{\varphi}_n$ and $\vec{\varphi}^*$ are both dual-feasible, then
	\begin{equation}
		\tfrac12\norm{\vars2_0\vec{\varphi}_n-\vec{\varphi}^*}_{\ell^2}^2 \leq \op{E}^\dag(\vars2_0\vec{\varphi}_n) - \op{E}^\dag(\vec{\varphi}^*) = \op{E}^\dag(\vars2_0\vec{\varphi}_n) + \op{E}(\var0^*) \leq \op{E}^\dag(\vars2_0\vec{\varphi}_n) + \op{E}(\var0_n),
	\end{equation}
	which gives an easily computable bound on $\norm{\vars2_0\vec{\varphi}_n-\vec{\varphi}^*}_{\ell^2}$. Now we estimate $\A^*\vec{\varphi}_n$ on the support of $\var0^*$:
	\begin{align}
		\min_{\vec{x}\in\supp(\var0^*)} |[\tens{\Pi}_n\A^*\vec{\varphi}_n](\vec{x})| &\geq \min_{\vec{x}\in\supp(\var0^*)} |[\A^*\vec{\varphi}_n](\vec{x})|
		\\&= \vars2_0^{-1}\min_{\vec{x}\in\supp(\var0^*)} |[\A^*\vars2_0 \vec{\varphi}_n](\vec{x})|
		\\&\geq \vars2_0^{-1}\min_{\vec{x}\in\supp(\var0^*)} |[\A^*\vec{\varphi}^*](\vec{x})| - |[\A^*\vars2_0 \vec{\varphi}_n-\A^*\vec{\varphi}^*](\vec{x})|
		\\&= \vars2_0^{-1}\min_{\vec{x}\in\supp(\var0^*)} \mu - |[\A^*\vars2_0 \vec{\varphi}_n-\A^*\vec{\varphi}^*](\vec{x})|
		\\&\geq \vars2_0^{-1}\left(\mu - |\A^*|_{\ell^2\to L^\infty}\norm{\vars2_0 \vec{\varphi}_n-\vec{\varphi}^*}_{\ell^2}\right).
	\end{align}
	Therefore,
	\begin{equation}
		|[\tens{\Pi}_n\A^* \vec{\varphi}_n](\vec{x})|< \vars2_0^{-1}\left(\mu - \sqrt{2(\op{E}(\var0_n)+\op{E}^\dag(\vars2_0\vec{\varphi}_n))}|\A^*|_{\ell^2\to L^\infty}\right) \qquad \implies \qquad \var0^*(\vec{x})= 0.\label{eq: support equation}
	\end{equation}
	This equation is valid when $\vec{x}$ is either a continuous or countable index, the only distinction is to switch to $\ell^\infty$ in the norm of $\A^*$. To make the equivalent statement on the discretised problem, simply replace $\vars2_0$ with $\vars2$ and $\A^*$ with $\tens{\Pi}_n\A^*$. There are two short observations on this formula:
	\begin{itemize}
		\item The convergence guarantee from Theorem~\ref{thm: exponential FISTA convergence} is for the primal gap $\op{E}(\var0_n)-\op{E}(\var0^*)$, rather than the primal-dual gap $\op{E}(\var0_n)+\op{E}^\dag(\vars2_0\vec{\varphi}_n)$ used here. Although there is no guaranteed rate for the primal-dual gap, it is much more easily computable than the primal gap.
		\item In Section~\ref{sec: continuous lasso rate}, $|\A^*|_{\ell^2\to C^1}<\infty$ was required to compute a rate of convergence for $\op{E}(\var0_n)$, but only $|\A^*|_{\ell^2\to L^\infty}<\infty$ is needed to estimate the support.
	\end{itemize}

	\subsection{Operator norms}\label{sec: smoothing operators}
	For numerical implementation of \eqref{eq: Lasso energy}, we are required to accurately estimate several operator norms of $\A$ of the form in \eqref{eq: kernels of A}. In particular, there are kernels $\psi_j\in\F H$ such that $(\A \var0)_j=\IP{\psi_j}{\var0}$ for each $j\in[m]$.
	Verifying that $\norm{\A}\leq1$ can be performed by computing $|\A\A^*|_{\ell^2\to\ell^2}$, and the adaptivity described in Sections~\ref{sec: continuous lasso rate}, \ref{sec: bound continuous}, and~\ref{sec: support detection} requires the values of $|\A^*|_{\ell^2\to L^\infty}$, $|\A^*|_{\ell^2\to C^1}$, and $|\A^*|_{\ell^2\to C^2}$. The aim for this section is to provide estimates of these norms and seminorms for the numerical examples presented in Section~\ref{sec: numerics}.
	
	The following lemma allows for exact computation of the operator norm of $\A$.
	\begin{lemma}\label{thm: norm bound L2}
		If $\A\colon \F H \to \R^m$ has kernels $\psi_j\in \F H$ for $j\in[m]$, then
		$\A\A^*\in\R^{m\times m}$ has entries $(\A\A^*)_{i,j} = \IP{\psi_i}{\psi_j}$, so the spectral norm $\norm{\A^*\A} = \norm{\A\A^*}$ can be computed efficiently.
	\end{lemma}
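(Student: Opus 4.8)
The proof is a routine computation with Hilbert-space adjoints, split into three ingredients.

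First I would identify the adjoint explicitly. Since each $\psi_j\in\F H$ and there are finitely many, Cauchy--Schwarz gives $\norm{\A\var0}_{\R^m}^2 = \sum_j |\IP{\psi_j}{\var0}|^2 \leq \big(\sum_j\norm{\psi_j}^2\big)\norm{\var0}^2$, so $\A$ is bounded and $\A^*$ exists. For $\vec r\in\R^m$ and $\var0\in\F H$, using \eqref{eq: kernels of A},
\[
\IP{\A\var0}{\vec r}_{\R^m} = \sum_{j=1}^m r_j\IP{\psi_j}{\var0} = \IP{\var0}{\textstyle\sum_{j=1}^m r_j\psi_j}_{\F H},
\]
hence $\A^*\vec r = \sum_{j=1}^m r_j\psi_j$. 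Substituting back, $(\A\A^*\vec r)_i = \IP{\psi_i}{\A^*\vec r} = \sum_{j=1}^m \IP{\psi_i}{\psi_j}\, r_j$, which is exactly the asserted formula $(\A\A^*)_{i,j} = \IP{\psi_i}{\psi_j}$.

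Second, I would recall the standard identity $\norm{\A}^2 = \norm{\A^*\A} = \norm{\A\A^*}$. The operator $\A^*\A$ is bounded, self-adjoint, and positive semidefinite on $\F H$, so its operator norm is the supremum of its Rayleigh quotient: $\norm{\A^*\A} = \sup_{\norm{\var0}=1}\IP{\A^*\A\var0}{\var0} = \sup_{\norm{\var0}=1}\norm{\A\var0}^2 = \norm{\A}^2$. Applying the same argument to $\A^*$ gives $\norm{\A\A^*} = \norm{\A^*}^2$, and since $\norm{\A}=\norm{\A^*}$ the chain of equalities closes. (Alternatively one invokes the $C^*$-identity, or the fact that $\A^*\A$ and $\A\A^*$ share the same spectrum away from $0$.)

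Third, the phrase \emph{``computed efficiently''} simply records that $\A\A^*$ is an explicit $m\times m$ Gram matrix with entries $\IP{\psi_i}{\psi_j}$, evaluated directly in $\F H$, whose spectral norm (its largest eigenvalue) is obtained by any dense symmetric eigensolver or a handful of power iterations, and $m$ is small in the applications. There is no substantive obstacle; the only subtlety worth a sentence is that when $\F H$ is infinite-dimensional the quantities $\norm{\A}$ and $\norm{\A^*\A}$ are operator norms rather than matrix norms, which is precisely what the Rayleigh-quotient argument above handles.
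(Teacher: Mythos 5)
Your proof is correct and follows essentially the same route as the paper, which likewise computes $(\A\A^*\vec r)_i = \IP{\psi_i}{\A^*\vec r} = \sum_{j=1}^m\IP{\psi_i}{\psi_j}r_j$ after identifying $\A^*\vec r=\sum_j r_j\psi_j$. Your additional justification of the boundedness of $\A$ and of the identity $\norm{\A^*\A}=\norm{\A\A^*}$ is material the paper treats as standard and omits, but it is accurate and harmless.
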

	\begin{proof}
		To compute the entries of $\A\A^*\colon\R^m\to\R^m$, observe that for any $\vec{r}\in\R^m$
		\begin{equation}
			(\A\A^*\vec{r})_i = \IP{\psi_i}{\A^*\vec{r}} = \IP{\psi_i}{\sum_{j=1}^mr_j\psi_j} = \sum_{j=1}^m \IP{\psi_i}{\psi_j}r_j
		\end{equation}
		as required.
		\qed\end{proof}
	If $\norm{\A^*\A}$ is not analytically tractable, then Lemma~\ref{thm: norm bound L2} enables it to be computed using standard finite dimensional methods. The operator $\A\A^*$ is always finite dimensional, and can be computed without discretisation error.

	In the continuous case, when $\F H=L^2(\Domain)$ we also need to estimate the smoothness properties of $\A^*$. A generic result for this is given in the following lemma.
	\begin{lemma}\label{thm: norm bound smoothness}
		If $\A\colon L^2([0,1]^d) \to \R^m$ has kernels $\psi_j\in L^2(\Domain)\cap C^k(\Domain)$ for $j\in[m]$, then for all $\frac1q+\frac1{q^*}=1$, $q\in[1,\infty]$, we have
		\begin{align}
			|\A^*\vec{r}|_{C^k}&\coloneqq \sup_{\vec{x}\in\Domain} |\nabla^k[\A^*\vec{r}]|(\vec{x}) \leq \sup_{\vec{x}\in\Domain}\norm{(\nabla^k\psi_j(\vec{x}))_{j=1}^m}_{\ell^{q^*}}\norm{\vec{r}}_{\ell^q},
			\\ |\A^*|_{\ell^2\to C^k}&\coloneqq \sup_{\norm{\vec{r}}_{\ell^2}\leq 1} |\A^*\vec{r}|_{C^k} \leq \sup_{\vec{x}\in\Domain}\norm{(\nabla^k\psi_j(\vec{x}))_{j=1}^m}_{\ell^{q^*}}\times \splitln{1}{q\geq 2}{\sqrt{m^{2-q}}}{q<2}.
		\end{align}
	\end{lemma}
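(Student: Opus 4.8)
The plan is to reduce everything to a single application of Hölder's inequality once the adjoint is written out explicitly. From the kernel representation $(\A\var0)_j=\IP{\psi_j}{\var0}$, for any $\vec r\in\R^m$ and $\var0\in L^2(\Domain)$ one has $\sum_{j=1}^m r_j\IP{\psi_j}{\var0}=\IP{\sum_{j=1}^m r_j\psi_j}{\var0}$, so $\A^*\vec r=\sum_{j=1}^m r_j\psi_j$. Being a finite linear combination of functions in $C^k(\Domain)$, it lies in $C^k(\Domain)$ and differentiation commutes with the sum, giving $\nabla^k[\A^*\vec r](\vec x)=\sum_{j=1}^m r_j\,\nabla^k\psi_j(\vec x)$ for every $\vec x\in\Domain$, with $|\nabla^k\cdot|$ understood as the chosen norm on $k$-tensors.

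For the first bound I would fix $\vec x\in\Domain$ and use the triangle inequality for $|\cdot|$ followed by Hölder's inequality in the index $j$ with $\tfrac1q+\tfrac1{q^*}=1$:
\[
\bigl|\nabla^k[\A^*\vec r](\vec x)\bigr|\;\leq\;\sum_{j=1}^m|r_j|\,\bigl|\nabla^k\psi_j(\vec x)\bigr|\;\leq\;\norm{\vec r}_{\ell^q}\,\norm{(\nabla^k\psi_j(\vec x))_{j=1}^m}_{\ell^{q^*}}.
\]
The first factor is independent of $\vec x$, so taking $\sup_{\vec x\in\Domain}$ on both sides produces precisely the stated inequality for $|\A^*\vec r|_{C^k}$.

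For the second bound I would take $\sup_{\norm{\vec r}_{\ell^2}\le1}$ in the inequality just obtained, which replaces $\norm{\vec r}_{\ell^q}$ by the norm of the identity map $(\R^m,\norm\cdot_{\ell^2})\to(\R^m,\norm\cdot_{\ell^q})$. If $q\ge2$ this norm is $1$. If $q<2$ it equals $m^{\frac1q-\frac12}$, attained when all coordinates of $\vec r$ share the same modulus (Hölder with exponents $\tfrac2q$ and $\tfrac2{2-q}$), and it then remains to verify $m^{\frac1q-\frac12}\le m^{1-\frac q2}=\sqrt{m^{2-q}}$; since $m\ge1$ this is the scalar inequality $\tfrac1q+\tfrac q2\le\tfrac32$, which holds on $[1,2]$ because $q\mapsto\tfrac1q+\tfrac q2$ is convex and equals $\tfrac32$ at both endpoints (its interior minimum being $\sqrt2$, at $q=\sqrt2$). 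Combining the two cases yields the claimed constant.

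There is no genuine obstacle here: the whole argument is the adjoint identity, one Hölder estimate, and a finite-dimensional norm comparison. The only points requiring a little care are fixing the meaning of $|\nabla^k\cdot|$ so that the triangle inequality is available in the first step, and observing that the constant $\sqrt{m^{2-q}}$ in the statement is the (non-sharp) consequence of $\tfrac1q+\tfrac q2\le\tfrac32$ — the sharp value being $m^{1/q-1/2}$.
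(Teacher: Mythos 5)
Your proof is correct and follows essentially the same route as the paper's: the adjoint identity $\A^*\vec r=\sum_j r_j\psi_j$, Hölder in the index $j$ to get the pointwise bound, and a second Hölder (with exponents $\tfrac2q$ and $\tfrac2{2-q}$) to compare $\norm{\vec r}_{\ell^q}$ with $\norm{\vec r}_{\ell^2}$ when $q<2$. Your remark that the sharp constant is $m^{1/q-1/2}$ and that $\sqrt{m^{2-q}}$ is the relaxed version via $\tfrac1q+\tfrac q2\le\tfrac32$ is accurate and matches what the paper's computation implicitly yields.
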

	\begin{proof}
		For the first inequality, we apply the H\"older inequality on $\R^m$:
		$$ |\nabla^k[\A^*\vec{r}]|(\vec{x}) = \left|\sum_{j=1}^m\nabla^k\psi_j(\vec{x})r_j\right|
		\leq \left(\sum_{j=1}^m|\nabla^k\psi_j(\vec{x})|^{q^*}\right)^{\frac{1}{q^*}}\norm{\vec{r}}_{\ell^q} = \norm{(\nabla^k\psi_j(\vec{x}))_j}_{\ell^{q^*}}\norm{\vec{r}}_{\ell^q}\;.$$
		For the second inequality, if $q\geq2$ and $\sum_{j=1}^m r_j^2\leq 1$, then $|r_j|\leq 1$ for all $j$ and $\norm{\vec{r}}_{\ell^q}^q\leq \norm{\vec{r}}_{\ell^2}^2\leq 1$. If $q<2$ and $\norm{\vec{r}}_{\ell^2}\leq 1$, then we again use H\"older's inequality:
		$$\sum_{j=1}^m r_j^q \leq \Big(\sum_{j=1}^m 1^{Q^*}\Big)^{\frac1{Q^*}} \Big(\sum_{j=1}^m r_j^{qQ}\Big)^{\frac1{Q}} \leq m^{\frac{2-q}{2}}$$
		for $Q = \frac2q$.
		\qed\end{proof}
	
	The examples in Section~\ref{sec: numerics} require explicit computations of the expressions in Lemmas~\ref{thm: norm bound L2} and~\ref{thm: norm bound smoothness}. These computations are provided\ in the appendix, Theorem~\ref{thm: norm bound examples}.
	
	\section{Numerical examples}\label{sec: numerics}
	We present four numerical examples. The first two are in 1D to demonstrate the performance of different variants of Algorithm~\ref{alg: refining FISTA}, both with and without adaptivity. In particular, we explore sparse Gaussian deconvolution and sparse signal recovery from Fourier data. We compare with the \emph{continuous basis pursuit} (CBP) discretisation \cite{Ekanadham2011,Duval2017b} which is also designed to achieve super-resolution accuracy within a convex framework. More details of this method will be provided in Section~\ref{sec: 1D Lasso examples}.
	
	The next example is 2D reconstruction from Radon or X-ray data with wavelet-sparsity and a robust data fidelity. As the forward operator is not sufficiently smooth, we must optimise in $\ell^1(\R)$, which naturally leads to the choice of a wavelet basis. 
	
	Finally, we process a dataset which represents a realistic application in biological microscopy, referred to as STORM microscopy. In essence, the task is to perform 2D Gaussian de-blurring/super-resolution and denoising to find the location of sparse spikes of signal.
	
	In this section, the main aim is to minimise $\op{E}_0(\var0_n) = \op{E}(\var0_n)-\op{E}(\var0^*)$, and so this will be our main metric for the success of an algorithm, referred to as the ``continuous gap''. Lemma~\ref{thm: practical refinement criteria} only provides guarantees on the values of $\min_{n\leq N}\op{E}_0(\var0_n)$ so it is this monotone estimate which is plotted. As $\op{E}(\var0^*)$ is not known exactly, we always use the estimate $\min_{n\leq N}\op{E}_0(\var0_n) \approx \min_{n\leq N}\op{E}(\var0_n) + \min_{n'\leq n}\op{E}^\dag(\vars2_0\vec{\varphi}_{n'})$. Another quantity of interest is minimisation of the discrete energy $\min_{n\leq N}\op{E}(\var0_n) + \min_{n'\leq n}\op{E}^\dag(\vars2\vec{\varphi}_{n'})$ which will be referred to as the ``discrete gap''. Note that for the adaptive schemes the discrete gap may not be monotonic as the discrete dual problem changes with $N$.
	
	The code to reproduce these examples can be found online\footnote{\href{https://github.com/robtovey/2020SpatiallyAdaptiveFISTA}{https://github.com/robtovey/2020SpatiallyAdaptiveFISTA}}.

	\subsection{1D continuous LASSO}\label{sec: 1D Lasso examples}
	In this example we choose $\F{U}=\C M([0,1])$, $\F H=L^2([0,1])$,  $\op{f}(\cdot)=\frac12\norm{\cdot}_{\ell^2}^2$ and $\A\colon \F{U}\to \R^{30}$ with either random Fourier kernels:
	\begin{equation}
		(\A\var0)_j = \int_0^1\cos(\vars3_jx)\diff\var0(x), \qquad \vars3_j\sim \op{Uniform}[-100,100],\ j=1,2,\ldots,30,\ \mu =0.02, 
	\end{equation}
	or Gaussian kernels on a regular grid:
	\begin{equation}
		(\A\var0)_j = (2\pi\sigma^2)^{-\frac12}\int_0^1\exp\left(-\frac{(x-(j-1)\Delta)^2}{2\sigma^2}\right)\diff\var0(x), \quad \sigma=0.12,\ \Delta = \tfrac1{29},\ j=1,2,\ldots,30,\ \mu=0.06.
	\end{equation}
	
	Several variants of FISTA are compared for these examples but the key alternative shown here is the CBP discretisation. For this choice of $\op{f}$, we call \eqref{eq: Lasso energy} the continuous LASSO problem, for which there are many numerical methods (c.f. \cite{Bredies2013,Castro2016,Boyd2017,Catala2019}) however, most require the solution of a non-convex problem. We have focused on CBP because it approximates $\var0^*$ through a convex discrete optimisation problem which is asymptotically exact in the limit $\meshsize\to0$. It can also be optimised with FISTA which allows for direct comparison with the uniform and adaptive mesh approaches. The idea is that for a fixed mesh, the kernels of $\A$ are expanded to first order on each pixel and a particular first order basis is also chosen \cite{Ekanadham2011,Duval2017b}. If $\var0^*$ has only one Dirac spike in each pixel, then the zeroth order information should correspond to the mass of the spike, and additional first order information should determine the location.
	
	As shown in Section~\ref{sec: Lasso definition}, in 1D we have $\aU = \aE = 2$. The estimates given in \eqref{eq: Lasso energy rate} and \eqref{eq: Lasso resolution rate} in dimension $d=1$ predict that the adaptive energy will decay at a rate of $\op{E}(\var0_n)-\op{E}(\var0^*)\lesssim\frac1n$ so long as the pixel size also decreases at a rate of $\meshsize\sim\frac1n$. To achieve these rates, we implement a refinement criterion from Lemma~\ref{thm: practical refinement criteria} with guarantee of $\op{E}(\var0_{n_k-1})-\op{E}(\var0^*)\lesssim 2^{-k}$ using the estimates made in Section~\ref{sec: Lasso gap and gradient}. We choose subspaces $\F{U}^n$ to approximately enforce
	\begin{equation}\label{eq: less than double gap bound}
		\op{E}(\var0_n) + \op{E}^\dag(\vars2_0\vec{\varphi}_n) \leq 2(\op{E}(\var0_n) + \op{E}^\dag(\vars2\vec{\varphi}_n)),
	\end{equation}
	i.e. the continuous gap is bounded by twice the discrete gap. In particular, note that for $\vars2_0\approx \vars2$,
	\begin{equation}
		\op{E}^\dag(\vars2_0\vec{\varphi}_n) = \tfrac12\norm{\vars2_0\vec{\varphi}_n}^2 + \vars2_0\ip{\data}{\vec{\varphi}_n} = \frac{\vars2_0}{\vars2}\left(\frac{\vars2_0}{\vars2}\tfrac12\norm{\vars2\vec{\varphi}_n}^2 + \vars2\ip{\data}{\vec{\varphi}_n}\right) \approx \frac{\vars2_0}{\vars2} \op{E}^\dag(\vars2\vec{\varphi}_n).
	\end{equation}
	Converting this into a spatial refinement criteria, recall 
	\begin{equation}
		\frac{\vars2_0}{\vars2} \approx \frac{\Norm{\A^*\vec{\varphi}_n}_*}{\Norm{\tens{\Pi}_n\A^*\vec{\varphi}_n}_*} = \frac{\max_{\domain\in\F M^n} \norm{\A^*\vec{\varphi}_n}_{L^\infty(\domain)}}{\max_{\domain\in\F M^n} |\tens{\Pi}_n\A^*\vec{\varphi}_n(\domain)|} \approx \max_{\domain\in\F M^n}\frac{ \norm{\A^*\vec{\varphi}_n}_{L^\infty(\domain)}}{|\tens{\Pi}_n\A^*\vec{\varphi}_n(\domain)|}
	\end{equation}
	is the maximum ratio of second vs. zeroth order Taylor approximations of $\A^*\vec{\varphi}_n$ on pixel $\domain$. This was found to be an efficient method of selecting pixels for refinement using quantities which had already been computed. Note briefly that this greedy strategy directly targets uncertainty, refinements also happen outside of the support of $u_n$ to guarantee that this is representative of $\var0^*$. Such refinement is necessary to avoid discrete minimisers of $\op{E}$ which are not global minimisers.
	
	\begin{figure}\centering
		\includegraphics[width=.9\textwidth]{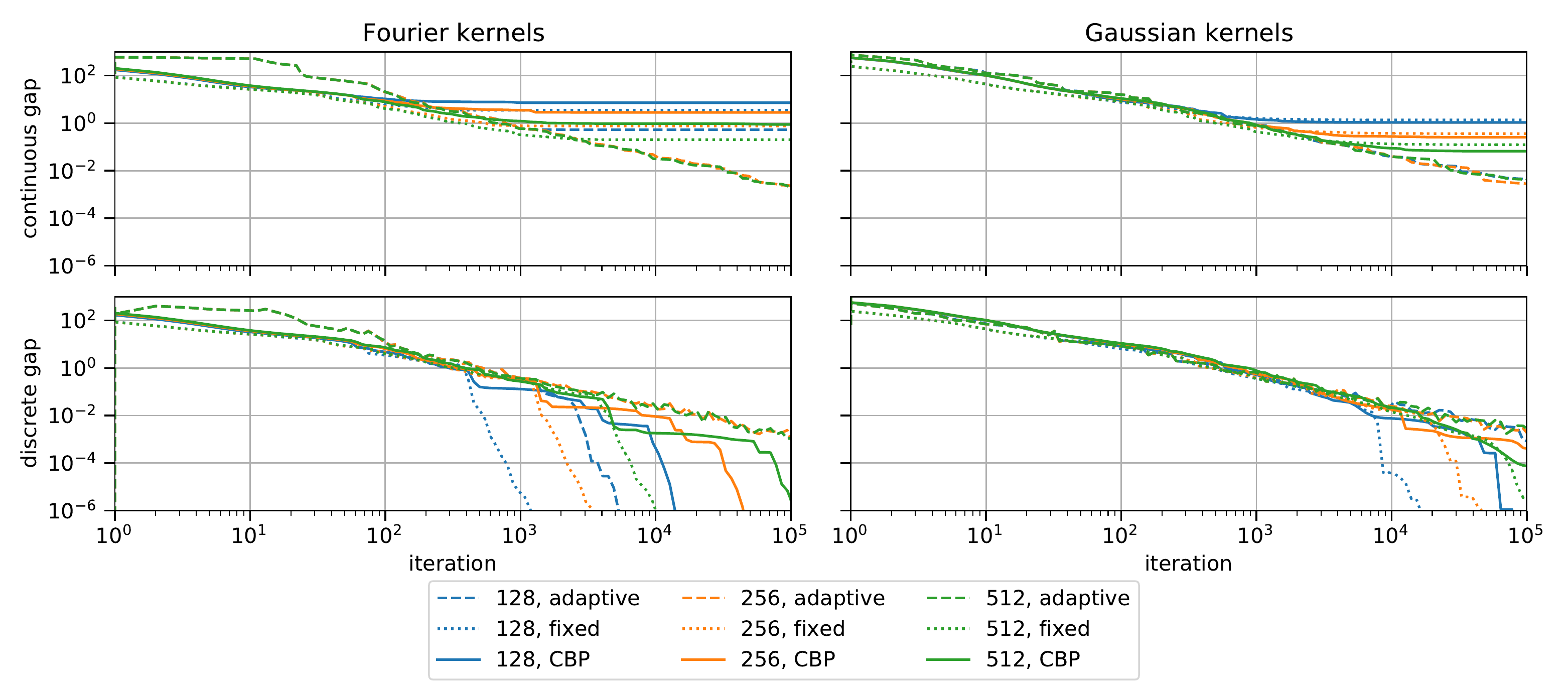}
		\caption{Rates of continuous/discrete gap convergence for different LASSO algorithms with 128, 256, or 512 pixels. The ``adaptive'' method uses the proposed algorithm. Both ``fixed'' and ``CBP'' use standard FISTA with a uniform discretisation.}\label{fig: convergence with ndofs}
		
		\vspace*{\floatsep}
		
		\includegraphics[width=.9\textwidth]{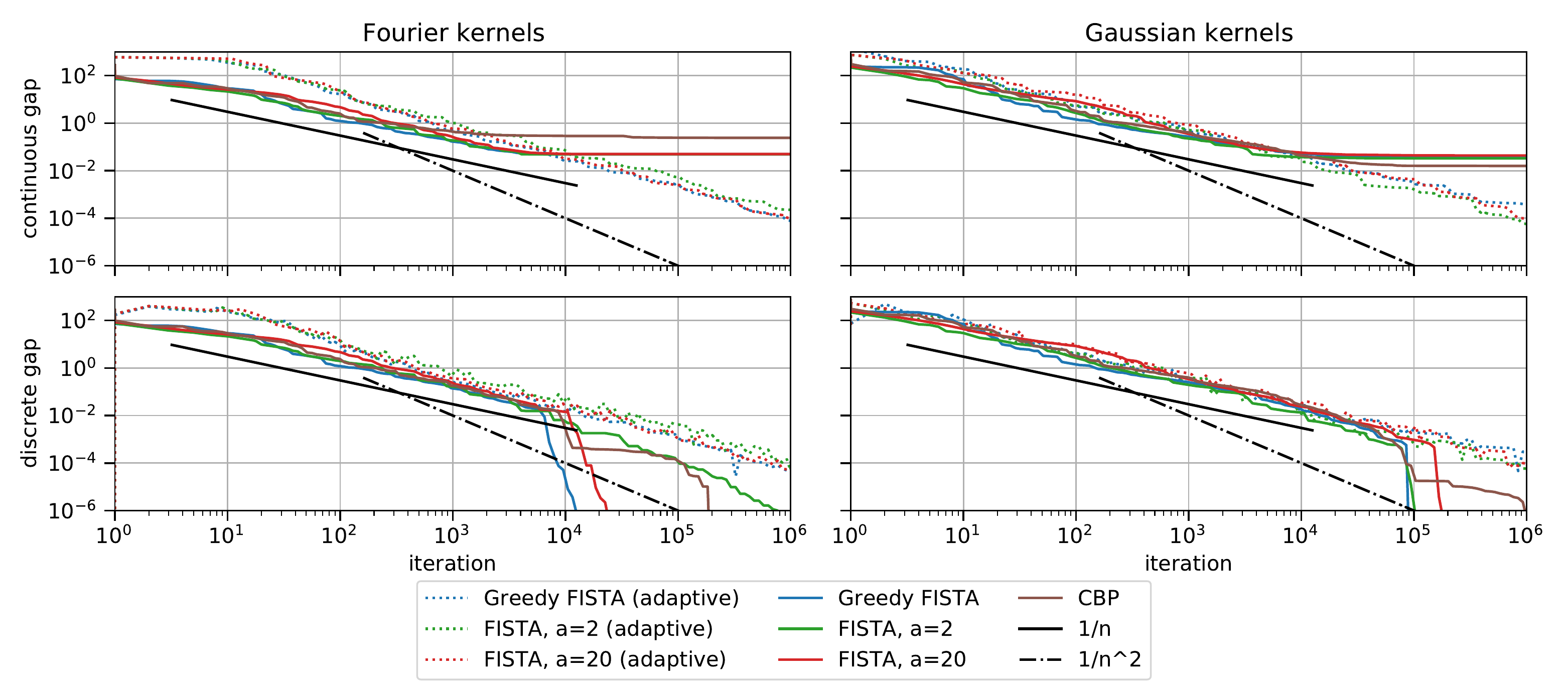}
		\caption{Convergence plots for solving 1D problems with different algorithms. ``Adaptive'' methods use Algorithm~\ref{alg: refining FISTA} with fewer than 1024 pixels and the remaining methods use a uniform discretisation of 1024 pixels.}\label{fig: convergence with method}
		
		\vspace*{\floatsep}
		
		\includegraphics[width=.9\textwidth]{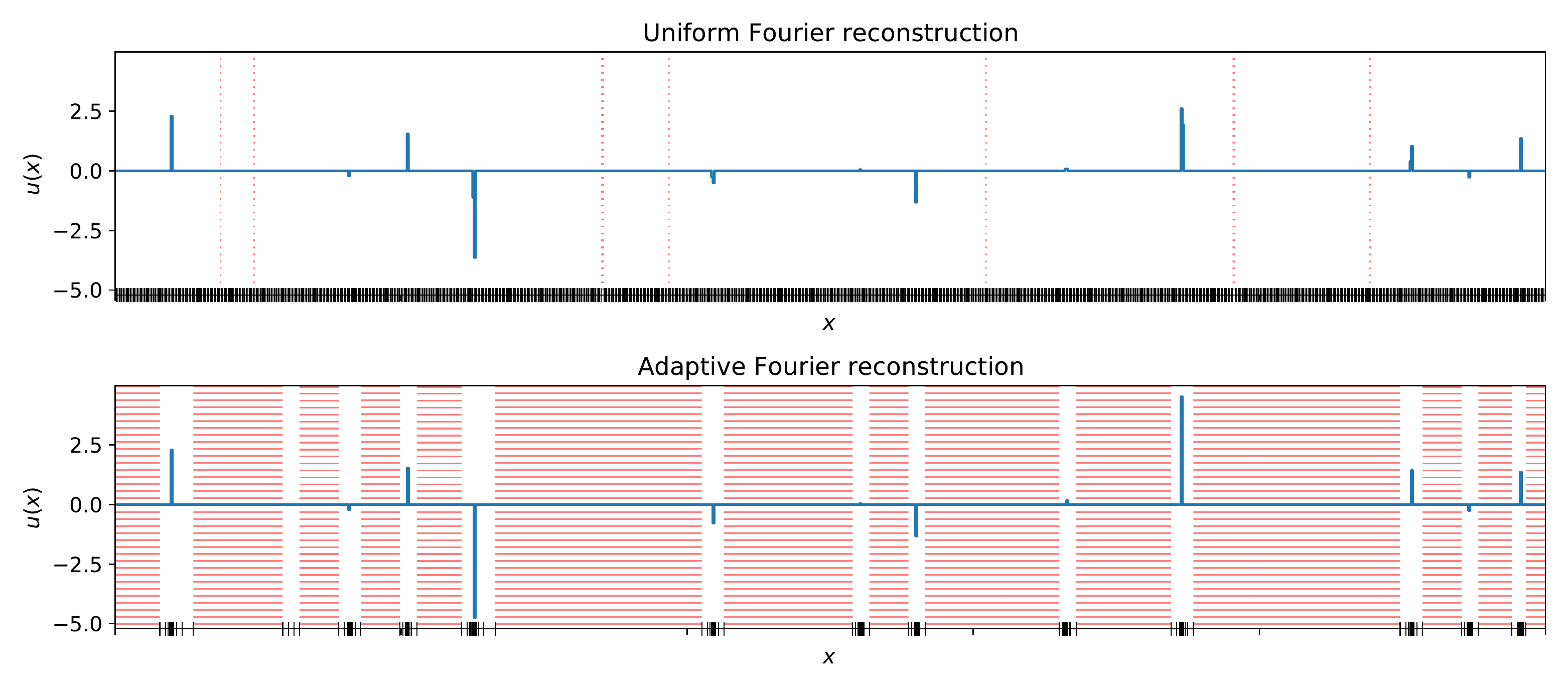}
		\caption{Example reconstruction from the algorithms considered in Fig.~\ref{fig: convergence with method}. Pixel boundaries are indicated on the $x$-axis and the filtering method of Section~\ref{sec: support detection} allows us to exclude the red shaded regions from $\op{supp}(u^*)$. Values on the $y$-axis are normalised to units of mass, i.e. a Dirac mass would have height 1.}\label{fig: example recon}
	\end{figure}
	
	\paragraph{Comparison of discretisation methods}
	In Fig.~\ref{fig: convergence with ndofs} we compare the three core approaches: fixed uniform
	discretisation, adaptive discretisation, and CBP. In particular, we wish to observe their convergence properties as the number of pixels is allowed to grow. In each case we use a FISTA stepsize of $t_n = \frac{n+19}{20}$. The adaptive discretisation is started with one pixel and limited to 128, 256, or 512 pixels while the fixed and CBP discretisations have uniform discretisations with the maximum number of pixels. The main observations are:
	\begin{itemize}
		\item The adaptive scheme is much more efficient, in both examples the adaptive scheme with 128 pixels is at least as good as both fixed discretisations with 512 pixels. In fact, only a maximum of 214 pixels were needed by the adaptive method in either example.
		\item With Fourier kernels the uniform piecewise constant discretisation is more efficient than CBP but in the Gaussian case this is reversed. This suggests that the performance of CBP depends on the smoothness of $\A$.
		\item The discrete gaps for non-adaptive optimisation behave as is common for FISTA, initial convergence is polynomial until a locally linear regime activates \cite{Tao2016}. CBP is always slower to converge than the piecewise constant discretisation.
		\item The adaptive refinement criterion succeeds in keeping the continuous/discrete gaps close for all $n$, i.e. \eqref{eq: less than double gap bound}.
	\end{itemize}
	It is not completely fair to judge CBP with the continuous gap because, although it generates a continuous representation, this continuous representation is not necessarily consistent with the discrete gap being optimised, unlike when discretised with finite element methods. On the other hand, this is still the intended interpretation of the algorithm and we have no more appropriate metric for success in this case.

	\paragraph{Comparison of FISTA variants}
	Fig.~\ref{fig: convergence with method} compares many methods with either fixed or adaptive discretisations. Each adaptive scheme is allowed up to 1024 pixels and each uniform discretisation uses exactly 1024. An example of each reconstruction method is shown in Fig.~\ref{fig: example recon}. The adaptive method better identifies the support of $u^*$ and clearly localises pixels on that support. The reconstruction with uniform grid fails to provably identify the support of $\var0^*$, despite having found a qualitatively accurate discrete minimiser. The ``Greedy FISTA'' implementation was proposed by in \cite{Liang2018} and we include the adaptive variant despite a lack of convergence proof. The remaining FISTA algorithms use a FISTA time step of $t_n = \frac{n+a-1}{a}$ for the given value of $a$, as proposed in \cite{Chambolle2015}. In this example CBP used the greedy FISTA implementation which gave faster observed convergence. Fig.~\ref{fig: convergence with method} compares the discrete gaps because it is the accurate metric for fixed discretisations, and for the adaptive discretisation it should also be an accurate predictor of the continuous gap. 
	The main observations are:
	\begin{itemize}
		\item Each algorithm displays very similar convergence properties. The main difference is that the reconstructions with fixed discretisations accelerate after $10^4$-$10^5$ iterations.
		\item During the initial ``slow'' phase, adaptive and fixed discretisations appear to achieve very similar (discrete) convergence rates. The coarse-to-fine adaptivity is not slower than fixed discretisations in this regime.
		\item Lemma~\ref{thm: practical refinement criteria} accurately predicts the $\frac1n$ rate of the adaptive methods, mirrored in the fixed discretisations. This suggests that high-resolution discretisations are also initially limited by this $\frac1n$ rate before entering the asymptotic regime, consistent with \eqref{eq: slow exact FISTA}.
		\item The fastest FISTA stepsize choice is consistently the greedy variant, although $a=20$ is very comparable.
		\item While each adaptive algorithm is allowed to use up to 1024 pixels, in Fig.~\ref{fig: convergence with method} the most used was 235.
	\end{itemize}
	
	\paragraph{Comparison of fixed and adaptive discretisation}
	Motivated by the findings in Fig.~\ref{fig: convergence with method}, we now look more closely at the performance of the $a=20$ and the greedy FISTA schemes. We have convergence results for the former, but the latter typically performs the best for non-adaptive optimisation and is never worse than $a=20$ in the adaptive setting. The question is whether it is faster/more efficient to use the proposed adaptive scheme, or to use a classical scheme at sufficiently high uniform resolution. The fixed discretisations use 1024 pixels (i.e. constant pixel size of $2^{-10}$ in Fig.~\ref{fig: comparison with iteration}) and the adaptive discretisation starts with two pixels with an upper limit of 1024. As expected, the fixed discretisation starts with a smaller continuous gap before plateauing to a sub-optimal gap around $\op{E}=\op{E}(\var0^*) + 0.1$.
	
	Fig.~\ref{fig: comparison with iteration} shows convergence of pixel size and continuous gap with respect to number of iterations. Fig.~\ref{fig: comparison with time} shows the more practical attributes of continuous gap and number of pixels against execution time. We see that the adaptive discretisation is consistently capable of computing lower energies with fewer pixels and in less time than the uniform discretisation. The convergence behaviour is very consistent with respect to number of iterations.
	
	Suppose that the numerical aim is to find a function $\var0_n$ with $\op{E}(\var0_n)-\op{E}(\var0^*)\leq 0.1$, all methods would converge after $O(10^3)$ iterations, demonstrating some equivalence between the two FISTA algorithms. For $n\in[10^3,10^4]$, in both problems, the adaptive schemes coincide with the fixed schemes in both energy and minimum pixel size. On the other hand, we also see that the adaptive scheme achieves this energy in almost an order of magnitude less time and fewer pixels.
	
	\begin{figure}\centering
		\includegraphics[width=.86\textwidth]{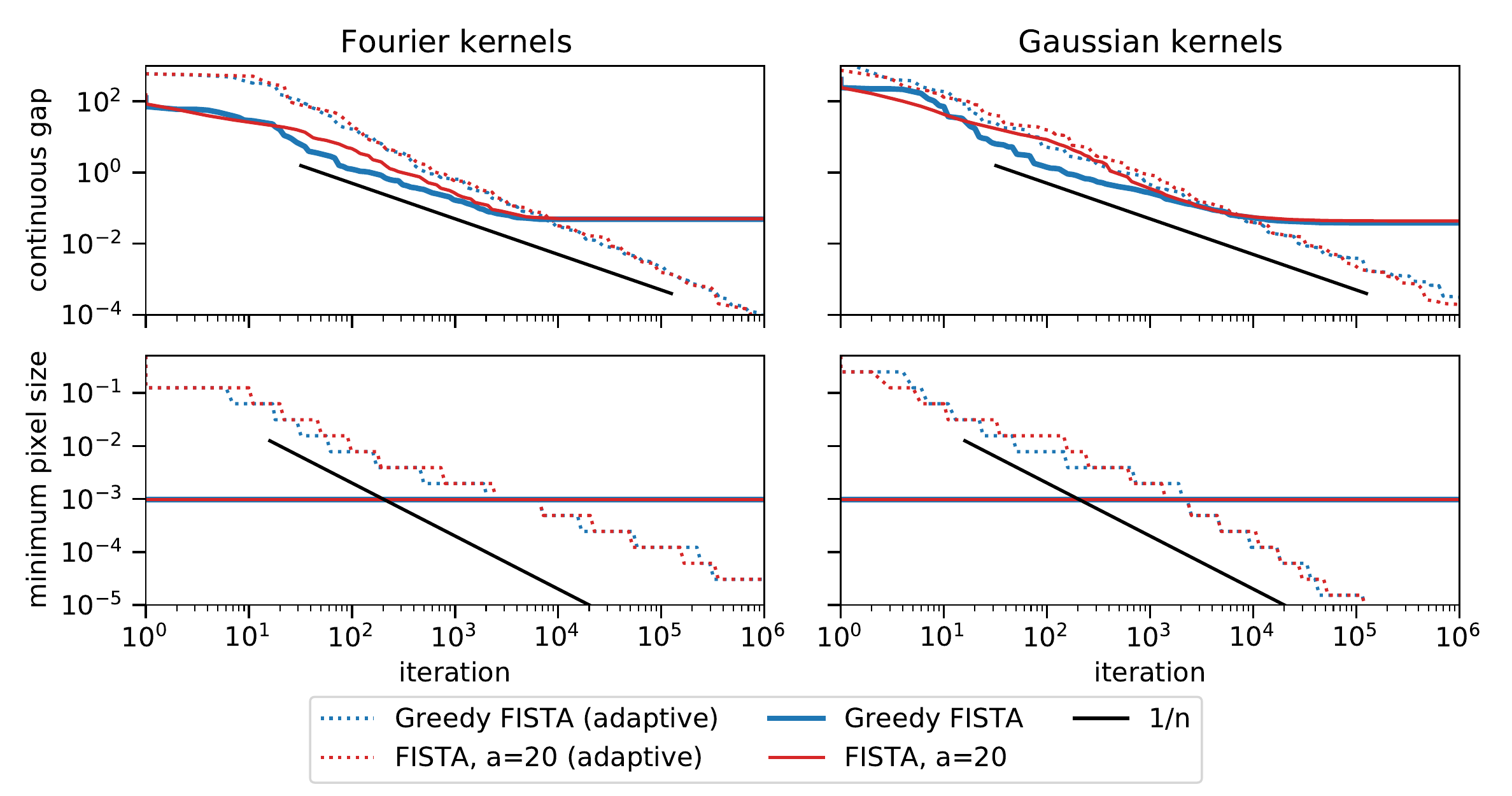}
		\caption{Continuous convergence of adaptive (coarse-to-fine pixel size) compared with uniform discretisation (constant pixel size) with respect to number of iterations. }\label{fig: comparison with iteration}
		
		\vspace*{\floatsep}
		
		\includegraphics[width=.86\textwidth]{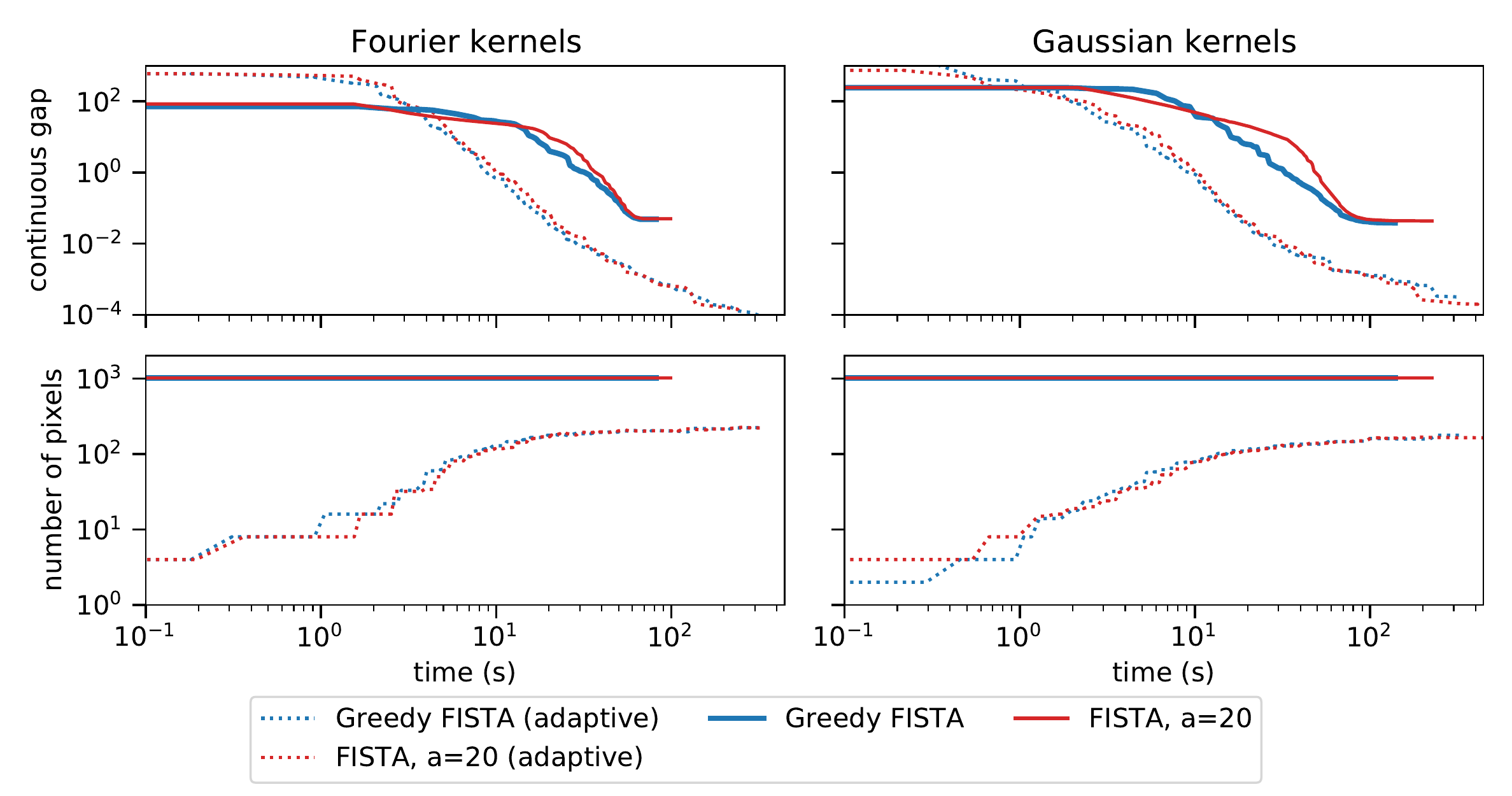}
		\caption{Continuous convergence of adaptive compared with uniform discretisation with respect to wall-clock time and total number of pixels (memory requirement).}\label{fig: comparison with time}
		
		\vspace*{\floatsep}
		
		\begin{center}\begin{tikzpicture}[grow'=right]
				\tikzstyle{level 1}=[level distance=5em, sibling distance=0em];
				\tikzstyle{level 2}=[level distance=5em, sibling distance=6em];
				\tikzstyle{level 3}=[level distance=5em, sibling distance=3em];
				\tikzstyle{bag} = [text width=3em, text centered];
				
				\node[text width=7cm] at (-6cm,0) {
					$\begin{aligned}
						J_n &= \{(0,0), (0,1), (0,2), (1,2), (1,1)\}
						\\\op{leaf}(J_n) &= \{(0,2), (1,2), (1,1)\}
						\\\F M^n &= \left\{[0,\tfrac14),[\tfrac14,\tfrac12),[\tfrac12,1)\right\}
					\end{aligned}$ };
				
				\node[bag] at (0,0) {$ $}
				child {node[bag] {$(0,0)$ $[0,1]$}
					child {node[bag] {$(0,1)$ $[0,\frac12]$}
						child {node[bag] {$(0,2)$ $[0,\frac14]$}
						} child {node[bag] {$(1,2)$ $[\frac14,\frac12]$}
						}
					} child {node[bag] {$(1,1)$ $[\frac12,1]$}
				}};
		\end{tikzpicture}\end{center}
		\caption{Example tree representation of 1D wavelets. Left: nodes, leaves, and mesh of discretisation. Right: arrangement into a tree with index $(j,k)$ and corresponding support of wavelet $w_{j,k}$ underneath.}\label{fig: wavelet tree}
	\end{figure}

	\subsection{2D robust sparse wavelet reconstruction}\label{sec: wavelet examples}
	In this example we consider $\A$ to be a 2D Radon transform. In particular, the rows of $\A$ correspond to integrals over the sets $\F{X}^I_i$ where 
	\begin{equation}
		\F{X}_i^I = \left\{\vec{x}\in[-\tfrac12,\tfrac12]^2\st \ip{\vec{x}}{\begin{pmatrix}\cos\theta_I\\\sin\theta_I\end{pmatrix}}\in \left[-\tfrac12+\tfrac{i-1}{100},-\tfrac12+\tfrac{i}{100}\right)\right\}, \quad \theta_I=\frac{180^\circ}{51}I
	\end{equation}
	for $i=\in[100]$, $I\in[50]$.
	This is not exactly in the form analysed by Theorem~\ref{thm: norm bound examples}, only the sets $\{\F{X}^I_i\st i\in[100]\}$ for each $I$ are disjoint, therefore we apply Theorem~\ref{thm: norm bound examples} block-wise to estimate
	\begin{equation}
		\norm{\A}_{L^2\to\ell^2} \leq \sqrt{\sum_{I\in[50]}\max_{i\in[100]} |\F{X}^I_i|} = \sqrt{\sum_{I\in[50]}\max_{i\in[100]} \int_{\F{X}^I_i}1\diff\vec{x}}= \sqrt{\sum_{I\in[50]}\max_{i\in[100]}\ (\A\1)_{i,I}}\;.
	\end{equation}
	$\A$ is not smooth, therefore we can't bound $|\A^*|_{C^k}$ for $k>0$, and so we must look to minimise over $\ell^1$ rather than $L^1$. The natural choice is to promote sparsity in a wavelet basis which can be rearranged into the  form of \eqref{eq: Lasso energy}:
	\begin{equation}
		\min_{\var0\in \F{U}} \op{f}(\A\var0-\data) + \mu \norm{\tens{W}^{-1}\var0}_{\ell^1} = \min_{\hat{\var0}\in\ell^1(\R)} \op{f}(\A\tens{W}\hat{\var0}-\data) + \mu \norm{\hat{\var0}}_{\ell^1}.
	\end{equation}
	The minimisers are related by $\var0^* = \tens{W}\hat{\var0}^*$ and, for wavelet bases, $\tens{W}$ is orthonormal so $\norm{\A\tens{W}}_{\ell^2\to\ell^2} = \norm{\A}_{L^2\to\ell^2}$. 
	In this example we consider the smoothed robust fidelity \cite{Rosset2007}
	\begin{equation}
		\op{f}(\vec{\varphi}) = \sum_{i=1}^m \splitln{10^{-4}|\varphi_i|}{|\varphi_i|\geq10^{-4}}{\tfrac12|\varphi_i|^2+\tfrac12 10^{-8}}{\text{else}} \approx 10^{-4}\norm{\vec{\varphi}}_{\ell^1}.
	\end{equation}
	From Section~\ref{sec: Lasso gap and gradient} we know that to track convergence and perform adaptive refinement, it is sufficient to accurately bound $|[\tens{W}^\top \A^*\vec{\varphi}_n]_j|$ for all $j\notin J_n$. If $\tens{W}$ is a wavelet transformation then its columns, $w_j\in L^2$, are simply the wavelets themselves and we can use the bound 
	\begin{equation}
		|\IP{w_j}{\A^*\vec{\varphi}_n}| = \left|\IP{w_j}{\1_{\supp(w_j)}\A^*\vec{\varphi}_n}\right| \leq \norm{\1_{\supp(w_j)} \A^*\vec{\varphi}_n}_{L^2}\leq \norm{\1_{\F{X}} \A^*\vec{\varphi}_n}_{L^2}
	\end{equation}
	for all $\F{X}\supset \supp(w_j)$.
	In the case of the Radon transform, we can compute the left-hand side explicitly for the finitely many $j\in J_n$, but we wish to use the right-hand side in a structured way to avoid computing the infinitely many $j\notin J_n$. To do this, we will take a geometrical perspective on the construction of wavelets to view them in a tree format. 
	
	\paragraph{Tree structure of wavelets}
	Finite elements are constructed with a mesh which provided a useful tool for adaptive refinement in Section~\ref{sec: bound continuous}. For wavelets, we will associate a tree with every discretisation and the leaves of the tree correspond to a mesh. This perspective comes from the multi-resolution interpretation of wavelets. An example is seen in Fig.~\ref{fig: wavelet tree} for 1D Haar wavelets, $w_{j,k}(x) = \sqrt{2}^k\psi(2^{k}x-j)$ where $\psi = \1_{[0,1)} - \1_{[-1,0)}$. 
	
	In higher dimensions, the only two things which change are the number of children ($2^d$ for non-leaves) and at each node you store the coefficients of $2^d-1$ wavelets. The support on each node is still a disjoint partition of unity consisting of regular cubes of side length $2^{-k}$ at level $k$. The only change in our own implementation is to translate the support to $[-\tfrac12,\tfrac12]^2$. We briefly remark that the tree structuring of wavelets is not novel and appears more frequently in the Bayesian inverse problems literature \cite{Castillo2019,Kekkonen2021}.
	
	\paragraph{Continuous gradient estimate}
	In Section~\ref{sec: 1D Lasso examples} we used the continuous gap as a measure for convergence, for wavelets we will use the continuous subdifferential. With the tree structure we can easily adapt the results of Section~\ref{sec: Lasso gap and gradient} to estimate subdifferentials (or function gaps). In particular,
	\begin{align}
		\Norm{\partial \op{E}(\var0_n)}_* &=\max\left(\Norm{\partial_n\op{E}(\var0_n)}_*, \max_{j\notin J_n} |\IP{w_j}{\A^*\vec{\varphi}_n}| -\mu \right) \\&\leq\max\left(\Norm{\partial_n\op{E}(\var0_n)}_*, \max_{j\in \op{leaf}(J_n)} \norm{\1_{\supp(w_j)}\A^*\vec{\varphi}_n}_{L^2} -\mu \right).\label{eq: wavelet error metric}
	\end{align}
	
	\paragraph{Numerical results}
	We consider two phantoms where the ground-truth is either a binary disc or the Shepp-Logan phantom. Both examples are corrupted with \SI{2}{\percent} Laplace distributed noise. This is visualised in Fig.~\ref{fig: haar data}. All optimisations shown are spatially adaptive using Haar wavelets and initialised with $\F{U}^0= \{x\mapsto c\st c\in\F R\}$. The gradient metric shown throughout is the $\ell^\infty$ norm. Motivated by \eqref{eq: wavelet error metric}, the spatial adaptivity is chosen to refine nodes $j\in\op{leaf}(J_n)$ to ensure that 
	$$ \norm{\1_{\supp(w_j)}\A^*\vec{\varphi}_n}_{L^2} -\mu  \leq 10\Norm{\partial_n\op{E}(\var0_n)}_*$$
	for all $j$ and $n$ (i.e. so that the continuous gradient is less than 10 times the discrete gradient).
	We do not expect wavelet regularisation to have state-of-the-art performance in the examples of Fig.~\ref{fig: haar data}. What they demonstrate is the preference Haar wavelets have to align large discontinuities with a coarse grid, even when the discretisation is allowed to be as fine as necessary. There is an average of $2\cdot10^6$ wavelet coefficients in each discretised reconstruction, although the higher frequencies have much smaller intensities. In limited data scenarios, wavelet regularisation automatically selects a local ``resolution'' which reflects the quality of data. Particularly in the Shepp-Logan reconstruction, we see that the outer ring is detected with a finer precision than the dark interior ellipses.
	
	The first numerical results shown in Fig.~\ref{fig: haar convergence} compare the same adaptive FISTA variants as shown in Fig.~\ref{fig: convergence with method}. In these examples we see that the greedy FISTA and the $a=20$ algorithms achieve almost linear convergence while $a=2$ is significantly slower. Interestingly, in both examples the $a=20$ variant uses half as many wavelets as the Greedy variant, and therefore converges slightly faster in time. 
	
	\begin{figure}\centering
		\includegraphics[width=.85\textwidth]{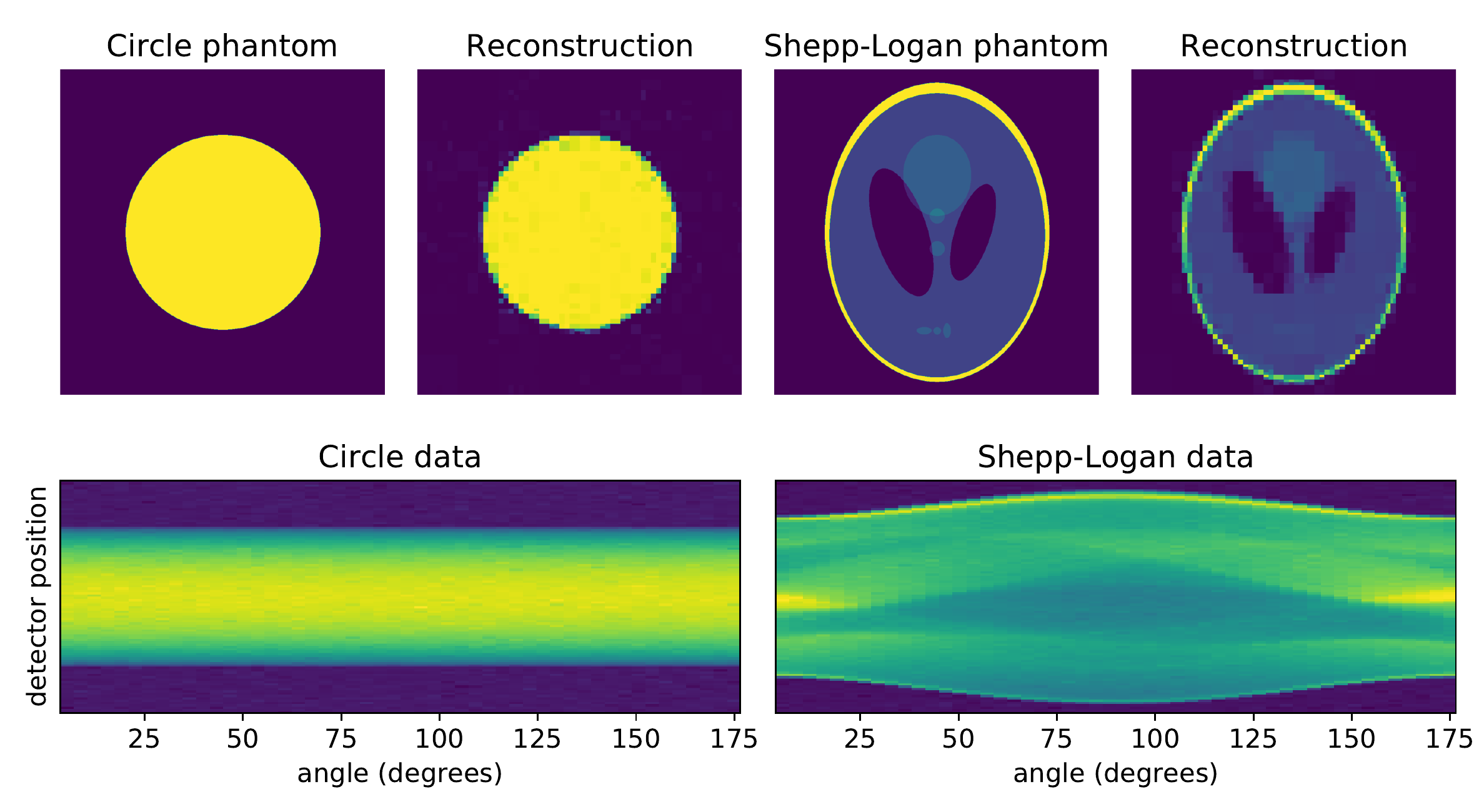}
		\caption{Phantoms, data and reconstructions for wavelet-sparse tomography optimisation. Both examples are corrupted with \SI{2}{\percent}\ Laplace distributed noise. }\label{fig: haar data}
		
		\vspace*{\floatsep}
		
		\includegraphics[width=.85\textwidth]{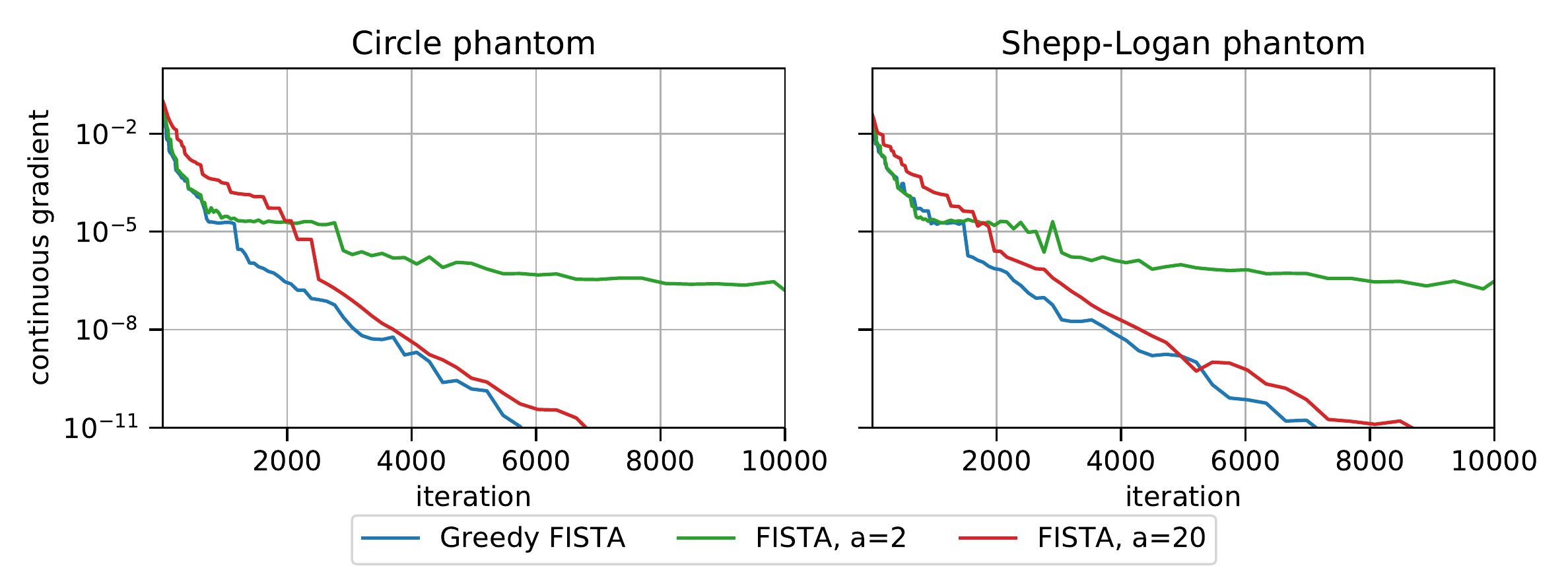}
		\caption{Convergence of different implementations of Algorithm~\ref{alg: refining FISTA} with an unlimited number of pixels for sparse wavelet optimisation.}\label{fig: haar convergence}
		
		\vspace*{\floatsep}
		
		\includegraphics[width=.85\textwidth]{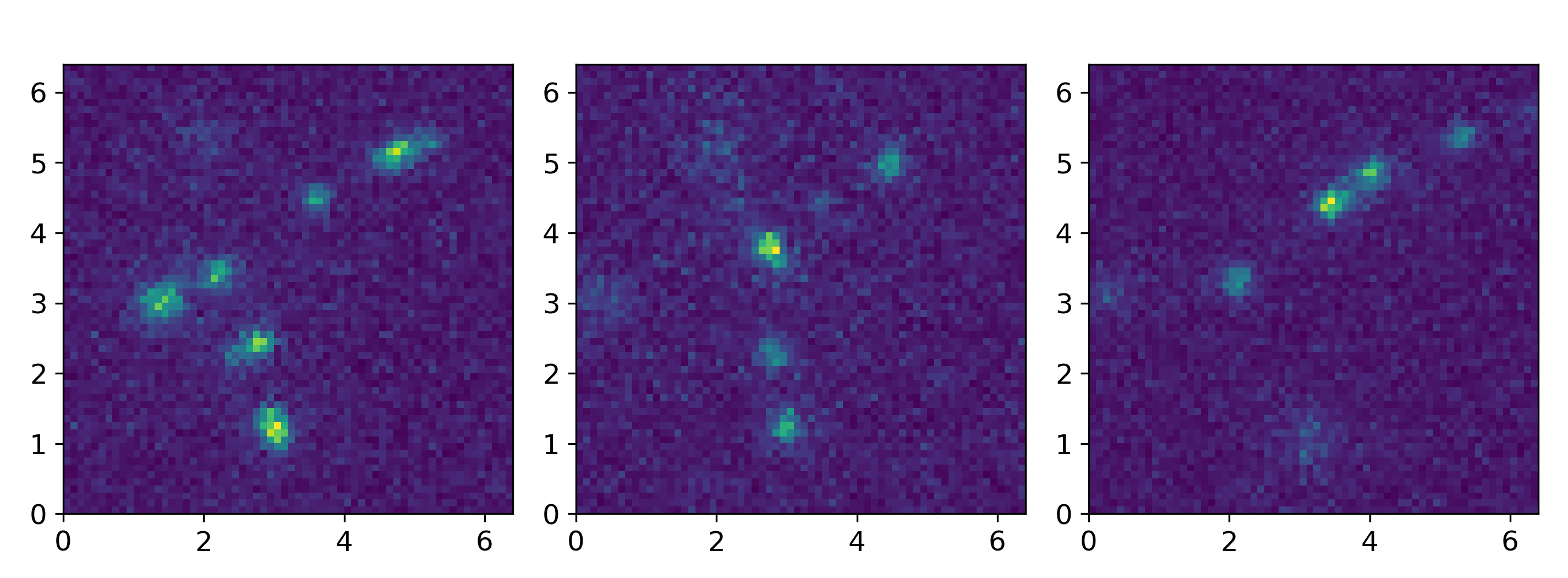}
		\caption{Example images from STORM dataset.}\label{fig: STORM data}
	\end{figure}
	
	\subsection{2D continuous LASSO}
	Our final application is a super-resolution/de-blurring inverse problem from biological microscopy. In mathematical terms, the observed data is a large number of sparse images which are corrupted by blurring and a large amount of noise, examples are seen in Fig.~\ref{fig: STORM data}. The task is to compute the centres of the spikes of signal in each image and then re-combine into a single super-resolved image, as in Fig.~\ref{fig: STORM results}. This technique is referred to as \emph{Single Molecule Localisation Microscopy} (SMLM), of which we consider the specific example of \emph{Stochastic Optical Reconstruction Microscopy} (STORM). Readers are directed to the references \cite{Sage2015,Sage2019,Schermelleh2019} for further details. The LASSO formulation ($\op{f}(\cdot)=\frac12\norm{\cdot}_{\ell^2}^2$) has previously been shown to be effective in the context of STORM \cite{Huang2017,Denoyelle2019}.
	
	Here we use a simulated dataset provided as part of the 2016 SMLM challenge\footnote{\url{http://bigwww.epfl.ch/smlm/challenge2016/datasets/MT4.N2.HD/Data/data.html}} for benchmarking software in this application. The corresponding LASSO formulation is 
	\begin{equation}
		(\A\var0)_{i} = (2\pi\sigma^2)^{-1}\int_{[0,6.4]^2} \exp\left(-\frac{1}{2\sigma^2}\left|\vec{x}- \Delta\begin{pmatrix}i_1+\tfrac12 & i_2+\tfrac12\end{pmatrix}^\top \right|^2\right)\var0(\vec{x})\diff\vec{x}, \qquad \sigma=0.2,\ \Delta=0.1
	\end{equation}
	for $i_1,i_2 = 1,2,\ldots,64$, $\F{U}=\C M([0,6.4]^2)$  and $\F H=L^2([0,6.4]^2)$ with lengths in $\SI{}{\micro\meter}$. 3020 frames are provided, examples of which are shown in Fig.~\ref{fig: STORM data}. To process this dataset, image intensities were normalised to $[0,1]$ then a constant was subtracted to approximate 0-mean noise. The greedy FISTA algorithm was used for optimisation with $\mu=0.15$, $10^3$ iterations, and a maximum of $10^5$ pixels per image. 
	
	Finally, all the reconstructions were summed and the result shown in Fig.~\ref{fig: STORM results}. The adaptive scheme used fewer than $10^4$ pixels per frame, a fixed discretisation with equivalent resolution of \SI{1.3}{\nano\meter} would have required more than $3\cdot10^6$ per frame. LASSO is compared with ThunderSTORM \cite{Ovesny2014}, a popular ImageJ plugin \cite{Schindelin2012} which finds the location of signal using Fourier filtering. The performance of ThunderSTORM was rated very highly in the initial SMLM challenge \cite{Sage2015}. Both methods compared here demonstrate the key structures of the reconstruction, however, both are sensitive to tuning parameters. In this examples, LASSO has possibly recovered too little signal and ThunderSTORM contains spurious signal. 
	
	Fig.~\ref{fig: STORM convergence} shows various convergence metrics for the adaptive reconstructions. The magenta line in the first panel shows that the continuous gap converges slightly faster than the $n^{-2/3}$ predicted by \eqref{eq: Lasso resolution rate} in dimension $d=2$. In this example we also implement the suggestion of Section~\ref{sec: support detection} to remove pixels outside of the support of $\var0^*$. From \eqref{eq: support equation}, any pixel $\domain\in\F M^n$ satisfying 
	\begin{equation}
		\vars2_0 \norm{ \tens{\Pi}_n\A^*\vec{\varphi}_n }_{L^\infty(\domain)} \leq (1-\op{threshold}_n)\mu
	\end{equation}
	guarantees that $\domain\cap\op{supp}(\var0^*) = \emptyset$. This threshold is plotted in red in the first panel of Fig.~\ref{fig: STORM convergence}. Once the value becomes less than 1, we can start reducing the number of pixels instead of continual refinement. We see that the resolution decreases steadily (second panel), but the total number of pixels (final panel) stops increasing after around 30 iterations.
	
	\begin{figure}[!b]\centering
		\includegraphics[width=.82\textwidth]{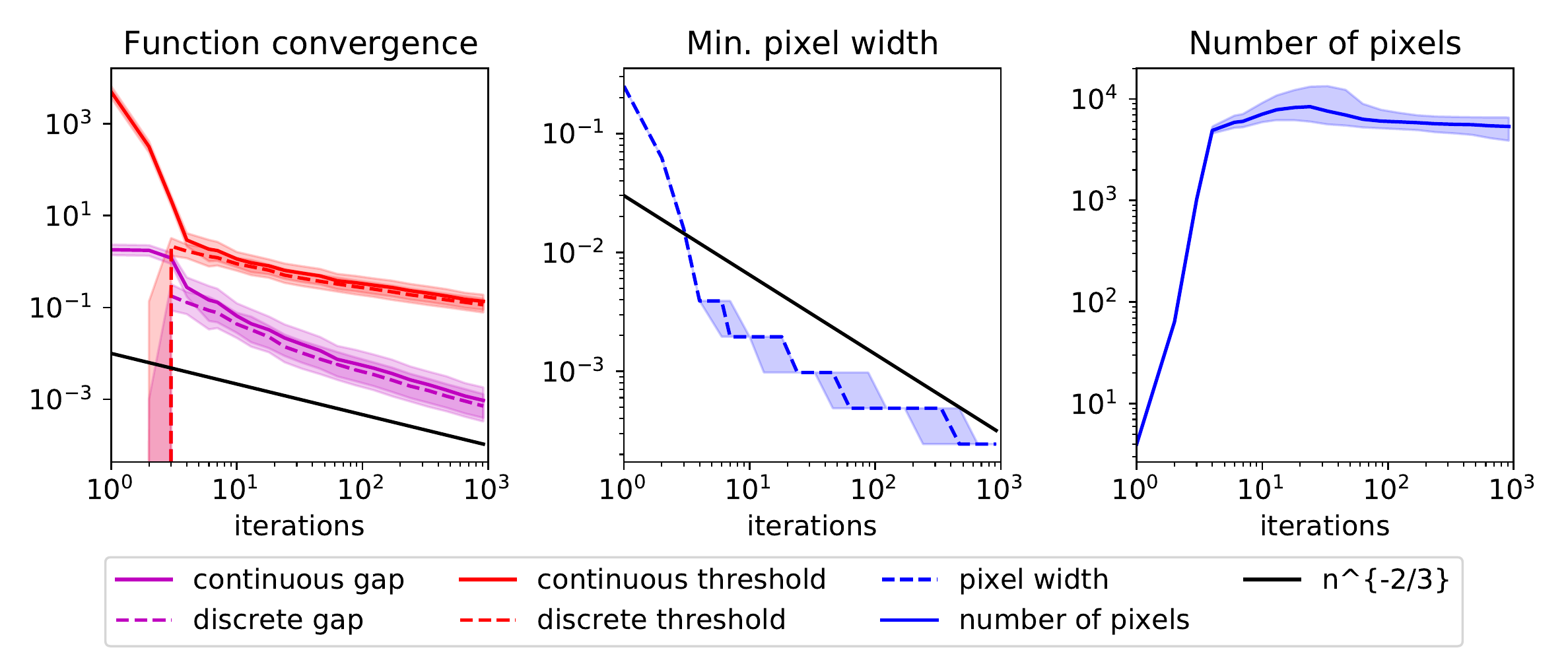}
		\caption{Convergence of adaptive FISTA for STORM dataset. Lines indicate the median value over 3020 STORM frames. Shaded regions indicate the \SIrange{25}{75}{\percent} interquartile range. Pixel width is scaled $[0,1]$ rather than $[0,\SI{6.4}{\micro\meter}]$.}\label{fig: STORM convergence}
		
		\vspace*{\floatsep}
		
		\includegraphics[width=.73\textwidth]{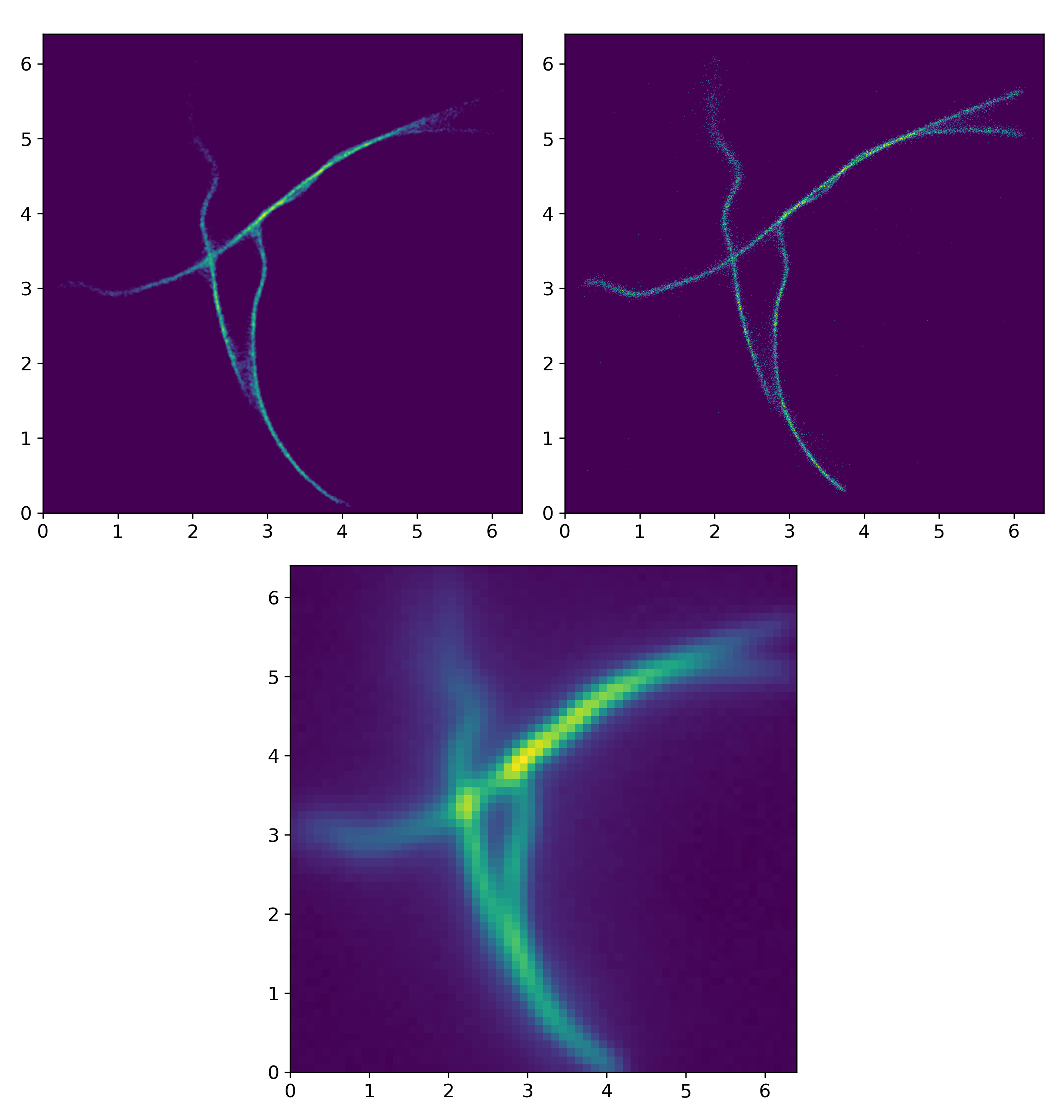}
		\caption{Processed results of the STORM dataset. Top left: LASSO optimisation with Algorithm~\ref{alg: refining FISTA}. Top right: Comparison with ThunderSTORM plugin. Bottom: Average data, no super-resolution or de-blurring.}\label{fig: STORM results}
	\end{figure}
	
	\section{Conclusions and outlook}
	In this work we have proposed a new adaptive variant of FISTA and provided convergence analysis. This algorithm allows FISTA to be applied outside of the classical Hilbert space setting, still with a guaranteed rate of convergence. We have presented several numerical examples where convergence with the refining discretisation is at least as fast as a uniform discretisation, although more efficient with regards to both memory and computation time. 
	
	In 1D we see good agreement with the theoretical rate. This rate also seems to be a good predictor for all variants of FISTA tested, although this is yet to be proven. Even the classical methods with a fixed discretisation are initially limited to the slower adaptive rate for small $n$.
	
	The results in 2D are similar, all tested FISTA methods converge at least at the guaranteed rate. The wavelet example was most impressive, achieving nearly linear convergence in energy. This is similar to the behaviour for classical FISTA although it is also yet to be formally proven.
	
	An interesting observation over all of the adaptive LASSO examples is that the standard oscillatory behaviour of FISTA has not occurred. With the monotone gaps plotted, oscillatory convergence should correspond to a piecewise constant descending gap. Either this behaviour only emerges for larger $n$, or the adaptivity provides a dampening effect for this oscillation.
	
	Moving forward, it would be interesting to see how far the analysis extends to other optimisation algorithms. Other variants of FISTA, such as the ``greedy'' implementation used here or the traditional Forward-Backward algorithm, should also be receptive to the analysis performed here. Furthermore, it would also interesting to attempt to replicate this refinement argument to extend the primal-dual algorithm \cite{Chambolle2011} or the Douglas-Rachford algorithm \cite{Douglas1956}.
	
	\begin{acknowledgements}
		R.T. acknowledges funding from EPSRC grant EP/L016516/1 for the Cambridge Centre for Analysis, and the ANR CIPRESSI project grant ANR-19-CE48-0017-01 of the French Agence Nationale de la Recherche. Most of this work was done while A.C. was still in CMAP, CNRS and Ecole Polytechnique, Institut Polytechnique de Paris, Palaiseau, France. Both authors would like to thank the anonymous reviewers who put in so much effort to improving this work.
	\end{acknowledgements}
	
	\begin{dataavailability}
		The synthetic STORM dataset was provided as part of the 2016 SMLM challenge, \url{http://bigwww.epfl.ch/smlm/challenge2016/datasets/MT4.N2.HD/Data/data.html}. The remaining examples used in this work can be generated with the supplementary code, \url{https://github.com/robtovey/2020SpatiallyAdaptiveFISTA}.
	\end{dataavailability}

	\begin{conflict}
		The authors have no conflicts of interest to declare which are relevant to the content of this article.
	\end{conflict}
	
	\bibliography{references}

\begin{thebibliography}{10}
\providecommand{\url}[1]{{#1}}
\providecommand{\urlprefix}{URL }
\expandafter\ifx\csname urlstyle\endcsname\relax
  \providecommand{\doi}[1]{DOI~\discretionary{}{}{}#1}\else
  \providecommand{\doi}{DOI~\discretionary{}{}{}\begingroup
  \urlstyle{rm}\Url}\fi

\bibitem{Alamo2019}
Alamo, T., Limon, D., Krupa, P.: Restart fista with global linear convergence.
\newblock In: 2019 18th European Control Conference (ECC), pp. 1969--1974. IEEE
  (2019)

\bibitem{Aujol2015}
Aujol, J.F., Dossal, C.: Stability of over-relaxations for the forward-backward
  algorithm, application to fista.
\newblock SIAM Journal on Optimization \textbf{25}(4), 2408--2433 (2015)

\bibitem{Beck2009}
Beck, A., Teboulle, M.: A fast iterative shrinkage-thresholding algorithm for
  linear inverse problems.
\newblock SIAM Journal on Imaging Sciences \textbf{2}(1), 183--202 (2009)

\bibitem{Bonnefoy2015}
Bonnefoy, A., Emiya, V., Ralaivola, L., Gribonval, R.: Dynamic screening:
  Accelerating first-order algorithms for the lasso and group-lasso.
\newblock IEEE Transactions on Signal Processing \textbf{63}(19), 5121--5132
  (2015)

\bibitem{Boyd2017}
Boyd, N., Schiebinger, G., Recht, B.: The alternating descent conditional
  gradient method for sparse inverse problems.
\newblock SIAM Journal on Optimization \textbf{27}(2), 616--639 (2017)

\bibitem{Boyer2019}
Boyer, C., Chambolle, A., Castro, Y.D., Duval, V., De~Gournay, F., Weiss, P.:
  On representer theorems and convex regularization.
\newblock SIAM Journal on Optimization \textbf{29}(2), 1260--1281 (2019)

\bibitem{Bredies2013}
Bredies, K., Pikkarainen, H.K.: Inverse problems in spaces of measures.
\newblock ESAIM: Control, Optimisation and Calculus of Variations
  \textbf{19}(1), 190--218 (2013)

\bibitem{Castillo2019}
Castillo, I., Rockova, V.: Multiscale analysis of bayesian cart.
\newblock University of Chicago, Becker Friedman Institute for Economics
  Working Paper (2019-127) (2019)

\bibitem{Catala2019}
Catala, P., Duval, V., Peyr{\'e}, G.: A low-rank approach to off-the-grid
  sparse superresolution.
\newblock SIAM Journal on Imaging Sciences \textbf{12}(3), 1464--1500 (2019)

\bibitem{Chambolle2015}
Chambolle, A., Dossal, C.: On the convergence of the iterates of the ``fast
  iterative shrinkage/thresholding algorithm''.
\newblock Journal of Optimization Theory and Applications \textbf{166}(3),
  968--982 (2015)

\bibitem{Chambolle2011}
Chambolle, A., Pock, T.: {A First-Order Primal-Dual Algorithm for Convex
  Problems with Applications to Imaging}.
\newblock Journal of Mathematical Imaging and Vision \textbf{40}(1), 120--145
  (2011).
\newblock \doi{10.1007/s10851-010-0251-1}

\bibitem{Castro2016}
De~Castro, Y., Gamboa, F., Henrion, D., Lasserre, J.B.: Exact solutions to
  super resolution on semi-algebraic domains in higher dimensions.
\newblock IEEE Transactions on Information Theory \textbf{63}(1), 621--630
  (2016)

\bibitem{Denoyelle2019}
Denoyelle, Q., Duval, V., Peyr{\'e}, G., Soubies, E.: The sliding frank--wolfe
  algorithm and its application to super-resolution microscopy.
\newblock Inverse Problems \textbf{36}(1), 014001 (2019)

\bibitem{Douglas1956}
Douglas, J., Rachford, H.H.: On the numerical solution of heat conduction
  problems in two and three space variables.
\newblock Transactions of the American Mathematical Society \textbf{82}(2),
  421--439 (1956)

\bibitem{Duval2017a}
Duval, V., Peyr{\'e}, G.: Sparse spikes super-resolution on thin grids i: the
  lasso.
\newblock Inverse Problems \textbf{33}(5), 055008 (2017)

\bibitem{Duval2017b}
Duval, V., Peyr{\'e}, G.: Sparse spikes super-resolution on thin grids ii: the
  continuous basis pursuit.
\newblock Inverse Problems \textbf{33}(9), 095008 (2017)

\bibitem{Ekanadham2011}
Ekanadham, C., Tranchina, D., Simoncelli, E.P.: Recovery of sparse
  translation-invariant signals with continuous basis pursuit.
\newblock IEEE transactions on signal processing \textbf{59}(10), 4735--4744
  (2011)

\bibitem{ElGhaoui2010}
El~Ghaoui, L., Viallon, V., Rabbani, T.: Safe feature elimination in sparse
  supervised learning.
\newblock Tech. Rep. UCB/EECS-2010--126, EECS Department, University of
  California, Berkeley (2010)

\bibitem{Hiriart2013}
Hiriart-Urruty, J.B., Lemar{\'e}chal, C.: Convex Analysis and Minimization
  Algorithms II: Advanced Theory and Bundle Methods, vol. 305.
\newblock Springer-Verlag, Berlin, Heidelberg (1993)

\bibitem{Huang2017}
Huang, J., Sun, M., Ma, J., Chi, Y.: Super-resolution image reconstruction for
  high-density three-dimensional single-molecule microscopy.
\newblock IEEE Transactions on Computational Imaging \textbf{3}(4), 763--773
  (2017)

\bibitem{Jiang2012}
Jiang, K., Sun, D., Toh, K.C.: An inexact accelerated proximal gradient method
  for large scale linearly constrained convex sdp.
\newblock SIAM Journal on Optimization \textbf{22}(3), 1042--1064 (2012)

\bibitem{Kekkonen2021}
Kekkonen, H., Lassas, M., Saksman, E., Siltanen, S.: Random tree besov
  priors--towards fractal imaging.
\newblock arXiv preprint arXiv:2103.00574  (2021)

\bibitem{Liang2017}
Liang, J., Fadili, J., Peyr{\'e}, G.: Activity identification and local linear
  convergence of forward--backward-type methods.
\newblock SIAM Journal on Optimization \textbf{27}(1), 408--437 (2017)

\bibitem{Liang2018}
Liang, J., Sch{\"o}nlieb, C.B.: Improving fista: Faster, smarter and greedier.
\newblock arXiv preprint arXiv:1811.01430  (2018)

\bibitem{Ndiaye2017}
Ndiaye, E., Fercoq, O., Gramfort, A., Salmon, J.: Gap safe screening rules for
  sparsity enforcing penalties.
\newblock The Journal of Machine Learning Research \textbf{18}(1), 4671--4703
  (2017)

\bibitem{Nesterov2004}
Nesterov, Y.: Introductory Lectures on Convex Optimization: A Basic Course.
\newblock Kluwer Academic Publishers Boston, Dordrecht, London (2004)

\bibitem{Ovesny2014}
Ovesn{\`y}, M., K{\v{r}}{\'\i}{\v{z}}ek, P., Borkovec, J., {\v{S}}vindrych, Z.,
  Hagen, G.M.: Thunderstorm: a comprehensive imagej plug-in for palm and storm
  data analysis and super-resolution imaging.
\newblock Bioinformatics \textbf{30}(16), 2389--2390 (2014)

\bibitem{Parpas2017}
Parpas, P.: A multilevel proximal gradient algorithm for a class of composite
  optimization problems.
\newblock SIAM Journal on Scientific Computing \textbf{39}(5), S681--S701
  (2017)

\bibitem{Poon2018}
Poon, C., Keriven, N., Peyr{\'e}, G.: The geometry of off-the-grid compressed
  sensing.
\newblock arXiv preprint arXiv:1802.08464  (2018)

\bibitem{Rosset2007}
Rosset, S., Zhu, J.: Piecewise linear regularized solution paths.
\newblock The Annals of Statistics pp. 1012--1030 (2007)

\bibitem{Sage2015}
Sage, D., Kirshner, H., Pengo, T., Stuurman, N., Min, J., Manley, S., Unser,
  M.: Quantitative evaluation of software packages for single-molecule
  localization microscopy.
\newblock Nature Methods \textbf{12}(8), 717--724 (2015)

\bibitem{Sage2019}
Sage, D., Pham, T.A., Babcock, H., Lukes, T., Pengo, T., Chao, J., Velmurugan,
  R., Herbert, A., Agrawal, A., Colabrese, S., et~al.: Super-resolution fight
  club: assessment of 2d and 3d single-molecule localization microscopy
  software.
\newblock Nature Methods \textbf{16}(5), 387--395 (2019)

\bibitem{Schermelleh2019}
Schermelleh, L., Ferrand, A., Huser, T., Eggeling, C., Sauer, M., Biehlmaier,
  O., Drummen, G.P.: Super-resolution microscopy demystified.
\newblock Nature cell biology \textbf{21}(1), 72--84 (2019)

\bibitem{Schindelin2012}
Schindelin, J., Arganda-Carreras, I., Frise, E., Kaynig, V., Longair, M.,
  Pietzsch, T., Preibisch, S., Rueden, C., Saalfeld, S., Schmid, B., et~al.:
  Fiji: an open-source platform for biological-image analysis.
\newblock Nature Methods \textbf{9}(7), 676--682 (2012)

\bibitem{Schmidt2011}
Schmidt, M., Roux, N.L., Bach, F.R.: Convergence rates of inexact
  proximal-gradient methods for convex optimization.
\newblock In: Advances in Neural Information Processing Systems, pp. 1458--1466
  (2011)

\bibitem{Strang1972}
Strang, G.: Approximation in the finite element method.
\newblock Numerische Mathematik \textbf{19}(1), 81--98 (1972)

\bibitem{Tao2016}
Tao, S., Boley, D., Zhang, S.: Local linear convergence of ista and fista on
  the lasso problem.
\newblock SIAM Journal on Optimization \textbf{26}(1), 313--336 (2016)

\bibitem{Unser2016}
Unser, M., Fageot, J., Gupta, H.: Representer theorems for sparsity-promoting
  $\ell^1$ regularization.
\newblock IEEE Transactions on Information Theory \textbf{62}(9), 5167--5180
  (2016)

\bibitem{Villa2013}
Villa, S., Salzo, S., Baldassarre, L., Verri, A.: Accelerated and inexact
  forward-backward algorithms.
\newblock SIAM Journal on Optimization \textbf{23}(3), 1607--1633 (2013)

\bibitem{Yu2021}
Yu, J., Lai, R., Li, W., Osher, S.: A fast proximal gradient method and
  convergence analysis for dynamic mean field planning.
\newblock arXiv preprint arXiv:2102.13260  (2021)

\end{thebibliography}
	\appendix
	\section{Proofs for FISTA convergence}\label{app: FISTA convergence}
	This section contains all of the statements and proofs of the results contained in Section~\ref{sec: FISTA convergence}. Recall that the subsets $\F{U}^n\subset\F H$ satisfy \eqref{eq: refining subspace definition}.
	
	\subsection{Proofs for Step 3}
	
	\begin{theorem}[Lemma~\ref{thm: one step FISTA}]\label{app:thm: one step FISTA}
		\Paste{thm: one step FISTA}
		\begin{equation}
			t_{n}^2(\op{E}(\var0_{n}) - \op{E}(\var2_n)) - (t_{n}^2-t_{n})(\op{E}(\var0_{n-1})-\op{E}(\var2_n)) \leq \tfrac1{2}\left[\norm{\var1_{n-1}}^2-\norm{\var1_{n}}^2\right] + \IP{\var1_{n}-\var1_{n-1}}{\var2_n}.
			\label{app:eq: one-step FISTA}
		\end{equation}
	\end{theorem}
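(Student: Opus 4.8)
The plan is to carry out, inside the subset $\F{U}^n$, the classical FISTA one‑step estimate of \cite{Beck2009,Chambolle2015}. The essential observation is that $\var0_n$ is, by construction, exactly the minimiser in Lemma~\ref{thm: descent lemma} applied with $\bar{\var0}=\bar{\var0}_{n-1}$ and the subset $\F{U}^n$; that lemma therefore gives, for \emph{every} $\var2\in\F{U}^n$,
\[
\op{E}_0(\var0_n) + \tfrac12\norm{\var0_n-\var2}^2 \;\le\; \op{E}_0(\var2) + \tfrac12\norm{\bar{\var0}_{n-1}-\var2}^2 .
\]
Write $t$ for the inertial parameter appearing in the step that produced $\bar{\var0}_{n-1}$, $\var0_n$ and $\var1_n$ (this is what the statement writes as $t_n$). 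I would make the Beck--Teboulle choice of test point $\var2 := \bigl(1-\tfrac1t\bigr)\var0_{n-1} + \tfrac1t\,\var2_n$. This point lies in $\F{U}^n$: $\var2_n\in\F{U}^n$ by hypothesis, $\var0_{n-1}\in\F{U}^n$ by the refining condition \eqref{eq: refining subspace definition}, $\F{U}^n$ is convex, and $\tfrac1t\in[0,1]$ since $t\ge1$. (When $n=1$ the weight $\tfrac1t$ equals $1$, so $\var2=\var2_1$ and $\bar{\var0}_0=\var1_0=\var0_0$, and the claim is a bare instance of Lemma~\ref{thm: descent lemma}.)

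I would then plug in this $\var2$, bound $\op{E}_0(\var2)\le\bigl(1-\tfrac1t\bigr)\op{E}_0(\var0_{n-1})+\tfrac1t\op{E}_0(\var2_n)$ by convexity of $\op{E}_0$ (which is $\op{f}+\op{g}$ up to a constant), rearrange, and multiply through by $t^2$. The left‑hand side becomes exactly $t^2(\op{E}_0(\var0_n)-\op{E}_0(\var2_n)) - (t^2-t)(\op{E}_0(\var0_{n-1})-\op{E}_0(\var2_n))$, i.e.\ the left‑hand side of the claim. The right‑hand side becomes $\tfrac12\norm{t\bar{\var0}_{n-1}-(t-1)\var0_{n-1}-\var2_n}^2 - \tfrac12\norm{t\var0_n-(t-1)\var0_{n-1}-\var2_n}^2$, and here the two defining relations of Algorithm~\ref{alg: refining FISTA}, namely $\bar{\var0}_{n-1}=\bigl(1-\tfrac1t\bigr)\var0_{n-1}+\tfrac1t\var1_{n-1}$ and $\var1_n=(1-t)\var0_{n-1}+t\var0_n$, give $t\bar{\var0}_{n-1}-(t-1)\var0_{n-1}=\var1_{n-1}$ and $t\var0_n-(t-1)\var0_{n-1}=\var1_n$. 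Hence the right‑hand side equals $\tfrac12\norm{\var1_{n-1}-\var2_n}^2 - \tfrac12\norm{\var1_n-\var2_n}^2$; expanding both squares (the $\norm{\var2_n}^2$ terms cancel) turns this into $\tfrac12[\norm{\var1_{n-1}}^2-\norm{\var1_n}^2] + \IP{\var1_n-\var1_{n-1}}{\var2_n}$, which is the stated inequality. Finally, since $\op{E}$ and $\op{E}_0$ differ by an additive constant that cancels in every difference occurring in the statement, Theorem~\ref{app:thm: one step FISTA} (the $\op{E}$‑form) is literally the same inequality.

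None of this is hard; the only part needing care is the index bookkeeping, i.e.\ making sure the \emph{same} $t$ appears in the formula for $\bar{\var0}_{n-1}$, in the formula for $\var1_n$, and as the weight in the test point, so that the two quadratic terms telescope to $\var1_{n-1}$ and $\var1_n$ with no residual. It is worth recording that the single‑iteration estimate uses only $\var0_{n-1},\var0_n,\var2_n\in\F{U}^n$ and never refers to $\F{U}^{n-1}$ or $\F{U}^{n+1}$, so no hypothesis on how the subsets evolve is needed at this level; the cost of changing discretisation surfaces only when these estimates are summed over $n$ with $n$‑varying test points $\var2_n$ in Theorem~\ref{thm: mini FISTA convergence}.
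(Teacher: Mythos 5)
Your proof is correct and follows essentially the same route as the paper's: both apply Lemma~\ref{thm: descent lemma} at $\bar{\var0}_{n-1}$ with the convex-combination test point $(1-\tfrac1{t_n})\var0_{n-1}+\tfrac1{t_n}\var2_n\in\F{U}^n$, invoke convexity of $\op{E}$, multiply by $t_n^2$, and use the algorithm's defining relations to rewrite the quadratic terms in $\var1_{n-1}$ and $\var1_n$ before expanding the squares. Your explicit remark on keeping the same inertial parameter consistent across $\bar{\var0}_{n-1}$, $\var1_n$ and the test point is exactly the bookkeeping the paper performs implicitly.
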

	\begin{proof}
		Modifying \cite[Thm 3.2]{Chambolle2015}, for $n\geq1$ we apply Lemma~\ref{thm: descent lemma} with $\bar{\var0} = \bar{\var0}_{n-1}$ and $\var2 = (1-\frac{1}{t_{n}})\var0_{n-1} + \frac{1}{t_{n}}\var2_n$. By \eqref{eq: refining subspace definition}, $\var0_{n-1}\in\F{U}^n$ is convex so $\var2\in \F{U}^n$. This gives
		\begin{equation}
			\op{E}(\var0_{n}) + \tfrac12\norm{\tfrac{1}{t_{n}}\var1_{n}-\tfrac{1}{t_{n}}\var2_n}^2 \leq \op{E}\left((1-\tfrac{1}{t_{n}})\var0_{n-1}+\tfrac{1}{t_{n}}\var2_n\right) + \tfrac12\norm{\tfrac{1}{t_{n}}\var1_{n-1}-\tfrac{1}{t_{n}}\var2_n}^2.
		\end{equation}
		By the convexity of $\op{E}$, this reduces to
		\begin{equation}
			\op{E}(\var0_{n}) - \op{E}(\var2_n) - (1-\tfrac{1}{t_{n}})[\op{E}(\var0_{n-1})-\op{E}(\var2_n)] \leq \tfrac1{2t_{n}^2}\norm{\var1_{n-1}-\var2_n}^2 - \tfrac1{2t_{n}^2}\norm{\var1_{n}-\var2_n}^2 = \tfrac1{2t_{n}^2}\left[\norm{\var1_{n-1}}^2-\norm{\var1_{n}}^2\right] + \tfrac1{t_{n}^2}\IP{\var1_{n}-\var1_{n-1}}{\var2_n}.
		\end{equation}
		Multiplying through by $t_n^2$ gives the desired inequality.
		\qed\end{proof}

	\begin{theorem}[Theorem~\ref{thm: mini FISTA convergence}]\label{app:thm: mini FISTA convergence}
		\Paste{thm: mini FISTA convergence}
		\begin{equation}\label{app:eq: FISTA inequality}
			\Paste{thm:eq: mini FISTA convergence}
		\end{equation}
	\end{theorem}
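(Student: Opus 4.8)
The plan is to obtain \eqref{app:eq: FISTA inequality} by summing the one-step estimate of Lemma~\ref{thm: one step FISTA} over $n=1,\ldots,N$ and telescoping, following the classical arguments \cite{Beck2009,Chambolle2015} almost verbatim; the only genuinely new bookkeeping is forced by allowing $\var2_n$ to change with $n$.

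First I would rewrite the conclusion of Lemma~\ref{thm: one step FISTA} in ``completed-square'' form. Using the elementary identity $\tfrac12[\norm{\var1_{n-1}}^2-\norm{\var1_n}^2]+\IP{\var1_n-\var1_{n-1}}{\var2_n}=\tfrac12\norm{\var1_{n-1}-\var2_n}^2-\tfrac12\norm{\var1_n-\var2_n}^2$, the estimate \eqref{eq: one-step FISTA} becomes, for every $n\geq1$,
\[ t_n^2\op{E}_0(\var0_n)-(t_n^2-t_n)\op{E}_0(\var0_{n-1})-t_n\op{E}_0(\var2_n)\ \leq\ \tfrac12\norm{\var1_{n-1}-\var2_n}^2-\tfrac12\norm{\var1_n-\var2_n}^2. \]
Summing the left-hand side over $n=1,\ldots,N$, the consecutive-iterate energies recombine: the coefficient of $\op{E}_0(\var0_n)$ for $1\leq n\leq N-1$ is $t_n^2-t_{n+1}^2+t_{n+1}=\rho_n$, that of $\op{E}_0(\var0_N)$ is $t_N^2$, and the remaining $\op{E}_0(\var0_0)$ boundary term is handled exactly as in the classical proof (this is where $t_0=1$ and $\op{E}_0\geq0$ enter). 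Moving $-\sum_{n=1}^N t_n\op{E}_0(\var2_n)$ to the other side then already produces the whole left-hand side of \eqref{app:eq: FISTA inequality} together with the $\sum_n t_n\op{E}_0(\var2_n)$ term on the right.

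The substance of the proof is the right-hand side, where the telescoping of $\tfrac12\norm{\var1_{n-1}-\var2_n}^2-\tfrac12\norm{\var1_n-\var2_n}^2$ is broken because $\var2_n\neq\var2_{n-1}$ in general. I would split $\var1_{n-1}-\var2_n=(\var1_{n-1}-\var2_{n-1})+(\var2_{n-1}-\var2_n)$ and expand the first square, giving
\begin{align*}
\tfrac12\norm{\var1_{n-1}-\var2_n}^2-\tfrac12\norm{\var1_n-\var2_n}^2
&=\Bigl(\tfrac12\norm{\var1_{n-1}-\var2_{n-1}}^2-\tfrac12\norm{\var1_n-\var2_n}^2\Bigr)
\\&\quad+\IP{\var1_{n-1}-\var2_{n-1}}{\var2_{n-1}-\var2_n}+\tfrac12\norm{\var2_{n-1}-\var2_n}^2.
\end{align*}
Now the first bracket telescopes cleanly, summing to $\tfrac12\norm{\var1_0-\var2_0}^2-\tfrac12\norm{\var1_N-\var2_N}^2=\tfrac12\norm{\var0_0-\var2_0}^2-\tfrac12\norm{\var1_N-\var2_N}^2$ since $\var1_0=\var0_0$; this contributes the $\tfrac12\norm{\var0_0-\var2_0}^2$ term and carries $\tfrac12\norm{\var1_N-\var2_N}^2$ over to the left-hand side. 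For the last two terms I would peel off $\IP{\var1_{n-1}}{\var2_{n-1}-\var2_n}$ and use the identity $-\IP{\var2_{n-1}}{\var2_{n-1}-\var2_n}+\tfrac12\norm{\var2_{n-1}-\var2_n}^2=\tfrac12\norm{\var2_n}^2-\tfrac12\norm{\var2_{n-1}}^2$, whose sum telescopes to $\tfrac12(\norm{\var2_N}^2-\norm{\var2_0}^2)$. Assembling these pieces gives exactly \eqref{app:eq: FISTA inequality}.

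I expect the only real difficulty to be organisational — keeping straight the three simultaneous telescopings (the $\op{E}_0(\var0_n)$ terms collapsing into $\rho_n$, the $\norm{\var1_n-\var2_n}^2$ terms into $\norm{\var0_0-\var2_0}^2-\norm{\var1_N-\var2_N}^2$, and the $\norm{\var2_n}^2$ terms into $\norm{\var2_N}^2-\norm{\var2_0}^2$) — together with spotting the quadratic identity above, which is precisely what turns the ``shock'' terms $\tfrac12\norm{\var2_{n-1}-\var2_n}^2$ incurred at each change of subset into the clean boundary term $\tfrac12(\norm{\var2_N}^2-\norm{\var2_0}^2)$ of the statement. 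Everything else is the FISTA computation of \cite{Chambolle2015} run on the closed convex subsets $\F{U}^n$ (using only convexity of $\op{E}_0$ and of each $\F{U}^n$, and the inclusion $\var0_n\in\F{U}^{n+1}$ from \eqref{eq: refining subspace definition}).
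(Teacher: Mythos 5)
Your proposal is correct and follows essentially the same route as the paper: sum the one-step estimate of Lemma~\ref{thm: one step FISTA} over $n=1,\dots,N$ and reorganise the right-hand side so that everything telescopes except the $\IP{\var1_{n-1}}{\var2_{n-1}-\var2_n}$ terms. The paper performs an Abel summation on $\sum_{n}\IP{\var1_n-\var1_{n-1}}{\var2_n}$ and completes squares only at the endpoints $n=0$ and $n=N$, whereas you complete squares at every step and then re-expand; the algebra is identical, and your treatment of the $\op{E}_0(\var0_0)$ boundary term is no less careful than the paper's own.
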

	\begin{proof}
		Theorem~\ref{app:thm: mini FISTA convergence} is just a summation of \eqref{app:eq: one-step FISTA} over all $n=1,\ldots,N$. To see this: first add and subtract $\Emin$ to each term on the left-hand side to convert $\op{E}$ to $\op{E}_0$, then move $\op{E}_0(\var2_n)$ to the right-hand side. Now \eqref{app:eq: one-step FISTA} becomes
		\begin{equation}
			t_n^2\op{E}_0(\var0_n) - (t_n^2-t_n)\op{E}_0(\var0_{n-1}) \leq t_n\op{E}_0(\var2_n) + \tfrac1{2}\left[\norm{\var1_{n-1}}^2-\norm{\var1_n}^2\right] + \IP{\var1_n-\var1_{n-1}}{\var2_n}. 
		\end{equation}
		Summing this inequality from $n=1$ to $n=N$ gives
		\begin{equation}
			t_N^2\op{E}_0(\var0_N) + \sum_{n=1}^{N-1}(\underbrace{t_n^2-t_{n+1}^2+t_{n+1}}_{=\rho_n})\op{E}_0(\var0_n) \leq \frac{\norm{\var1_0}^2-\norm{\var1_N}^2}{2} + \sum_{n=1}^N t_n\op{E}_0(\var2_n)+\IP{\var1_n-\var1_{n-1}}{\var2_n}.
		\end{equation}
		The final step is to flip the roles of $\var1_n$/$\var2_n$ in the final inner product term. Re-writing the right-hand side gives
		\begin{equation}
			\sum_{n=1}^N\IP{\var1_n-\var1_{n-1}}{\var2_n} = \IP{\var1_N}{\var2_N} - \IP{\var1_0}{\var2_0}+ \sum_{n=1}^N\IP{\var1_{n-1}}{\var2_{n-1}-\var2_n}.
		\end{equation}
		Noting that $\var1_0=\var0_0$, the previous two equations combine to prove the statement of Theorem~\ref{app:thm: mini FISTA convergence}.
		\qed\end{proof}
	
	The following lemma is used to produce a sharper estimate on sequences $t_n$.
	\begin{lemma}\label{app: tn upper bound}
		If $\rho_n=t_{n}^2-t_{n+1}^2+t_{n+1}\geq0$, $t_n\geq 1$ for all $n\in\F N$ then $t_n\leq n-1 +t_1$.
	\end{lemma}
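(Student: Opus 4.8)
The plan is to reduce the whole statement to the single-step bound $t_{n+1}\le t_n+1$ and then close by induction on $n$.

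First I would rewrite the hypothesis $\rho_n\ge 0$ as $t_{n+1}^2-t_{n+1}\le t_n^2$, i.e.\ a quadratic inequality in $t_{n+1}$. Completing the square gives $\left(t_{n+1}-\tfrac12\right)^2\le t_n^2+\tfrac14$, hence $t_{n+1}\le \tfrac12+\sqrt{t_n^2+\tfrac14}$. Since $t_n\ge 1>0$ we have $t_n^2+\tfrac14\le t_n^2+t_n+\tfrac14=\left(t_n+\tfrac12\right)^2$, so $\sqrt{t_n^2+\tfrac14}\le t_n+\tfrac12$ and therefore $t_{n+1}\le t_n+1$ for every $n\in\F N$.

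Second, I would induct on $n$ to get $t_n\le n-1+t_1$. The base case $n=1$ is the trivial identity $t_1\le t_1$. For the inductive step, assuming $t_n\le n-1+t_1$, the single-step bound yields $t_{n+1}\le t_n+1\le (n-1+t_1)+1=n+t_1=(n+1)-1+t_1$, which completes the induction and hence the proof.

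The only slightly delicate point is the passage from the quadratic inequality to $t_{n+1}\le t_n+1$, i.e.\ handling the square root; but this is exactly where the monotonicity $t_n\ge 1$ (in fact just $t_n\ge 0$) is used, so there is no real obstacle. Everything else is routine.
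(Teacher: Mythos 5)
Your proof is correct and follows essentially the same route as the paper: an induction driven by the quadratic inequality $t_{n+1}^2 - t_{n+1} \le t_n^2$ coming from $\rho_n \ge 0$. The only cosmetic difference is that you extract the one-step bound $t_{n+1}\le t_n+1$ up front by completing the square, whereas the paper folds the same algebra into the inductive step and closes it by contradiction.
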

	\begin{proof}
		This is trivially true for $n=1$. Suppose true for $n-1$, the condition on $\rho_{n-1}$ gives
		\begin{equation}
			t_n^2 -t_n \leq t_{n-1}^2 \leq (n-2+t_1)^2 = (n-1+t_1)^2 -2(n-1+t_1) + 1.
		\end{equation}
		Assuming the contradiction, if $t_n> n-1+t_1$ then the above equation simplifies to $n-1+t_1 < 1$. However, $t_1\geq 1$ implying that $n<1$ which completes the contradiction.
		\qed\end{proof}

	\begin{lemma}[Lemma~\ref{thm: mini exponential FISTA convergence}]\label{app:thm: mini exponential FISTA convergence}
		\Paste{thm: mini exponential FISTA convergence}
		\begin{equation}
			\Paste{thm:eq: mini exponential FISTA convergence}
		\end{equation}
		\Paste{thm:end: mini exponential FISTA convergence}
	\end{lemma}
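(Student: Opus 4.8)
The plan is to obtain the statement directly from Theorem~\ref{thm: mini FISTA convergence} by feeding in the piecewise-constant comparison sequence anticipated in Step~\ref{step: step 3 convergence bound}. First I would set, for each $n\in\F N$, $k(n)\coloneqq\max\{k:\ n_k\leq n\}$ and $\var2_n\coloneqq\tilde{\var2}_{k(n)}$; because by hypothesis $\tilde{\var2}_k\in\F{U}^{n_k}\cap\F{U}^{n_k+1}\cap\cdots\cap\F{U}^{n_{k+1}-1}$, each $\var2_n$ lies in $\F{U}^n$, so it is an admissible choice in \eqref{eq: FISTA inequality}. Taking $n_0\leq0$ (equivalently, letting block $0$ begin at iteration $0$) makes $\var2_0=\tilde{\var2}_0$, and the hypothesis $n_K\leq N<n_{K+1}$ forces $\var2_N=\tilde{\var2}_K$. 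Substituting these two identities into \eqref{eq: FISTA inequality} reproduces the entire left-hand side of the lemma verbatim, while the quadratic block $\tfrac12\big(\norm{\var0_0-\var2_0}^2-\norm{\var2_0}^2+\norm{\var2_N}^2\big)$ becomes exactly $C+\tfrac12\norm{\tilde{\var2}_K}^2$.

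What remains is to simplify the two sums on the right-hand side of \eqref{eq: FISTA inequality}, namely $\sum_{n=1}^N t_n\op{E}_0(\var2_n)$ and $\sum_{n=1}^N\IP{\var1_{n-1}}{\var2_{n-1}-\var2_n}$. For the inner products I would note that $\var2_{n-1}-\var2_n$ vanishes unless $n$ is a refinement time, i.e. $n=n_k$ for some $k$; the refinement times inside $\{1,\dots,N\}$ are exactly $n_1,\dots,n_K$ (again using $n_K\leq N<n_{K+1}$), while strict monotonicity of $(n_k)$ guarantees $n_k-1$ lies in block $k-1$, hence $\var2_{n_k-1}=\tilde{\var2}_{k-1}$. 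This collapses the inner-product sum to $\sum_{k=1}^K\IP{\var1_{n_k-1}}{\tilde{\var2}_{k-1}-\tilde{\var2}_k}$. For the energy sum I would partition $\{1,\dots,N\}$ into the full blocks $[n_k,n_{k+1})$, $k=0,\dots,K-1$, together with the partial block $[n_K,N]$; on block $k$ the factor $\op{E}_0(\var2_n)=\op{E}_0(\tilde{\var2}_k)$ is constant, so using $t_n\leq n$ for $n\geq1$ (Lemma~\ref{app: tn upper bound}), $\op{E}_0\geq0$, and the identity $\sum_{n=a}^{b}n=\tfrac12\big((b+1)^2-a^2\big)-\tfrac12(b+1-a)$ — whose last term is nonnegative since $a\leq b+1$ in every block — one gets $\sum_{n=n_k}^{n_{k+1}-1}t_n\leq\tfrac12(n_{k+1}^2-n_k^2)$ and $\sum_{n=n_K}^{N}t_n\leq\tfrac12\big((N+1)^2-n_K^2\big)$. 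Re-indexing $k\mapsto k-1$ converts $\sum_{k=0}^{K-1}\tfrac12(n_{k+1}^2-n_k^2)\op{E}_0(\tilde{\var2}_k)$ into $\sum_{k=1}^K\tfrac12(n_k^2-n_{k-1}^2)\op{E}_0(\tilde{\var2}_{k-1})$, and the partial block supplies the term $\tfrac12\big((N+1)^2-n_K^2\big)\op{E}_0(\tilde{\var2}_K)$. Assembling the left-hand side, the quadratic terms, the telescoped inner products and these block sums gives precisely the claimed multi-line inequality.

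The only genuinely delicate points, everything else being routine rearrangement, are: (i) checking that the jump set of $n\mapsto\var2_n$ inside $[1,N]$ is exactly $\{n_1,\dots,n_K\}$ with $\var2_{n_k-1}=\tilde{\var2}_{k-1}$, which is where strict monotonicity of $(n_k)$ and the overlapping-subspace hypothesis $\tilde{\var2}_k\in\F{U}^{n_k}\cap\cdots\cap\F{U}^{n_{k+1}-1}$ are both used; and (ii) keeping the summation limits exact when replacing $\sum t_n$ by the closed-form quadratics $\tfrac12\big((b+1)^2-a^2\big)$ (which also uses $t_1=1$, or any stepsize with $t_n\leq n$), since an off-by-one there would break the match with the stated right-hand side. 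I expect (ii) to be the main place where care is needed, since the argument must land on exactly $(N+1)^2-n_K^2$ and $n_k^2-n_{k-1}^2$ rather than on merely asymptotically equivalent expressions.
\qed
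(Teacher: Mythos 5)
Your proposal is correct and follows essentially the same route as the paper: choose $\var2_n=\tilde{\var2}_{k(n)}$ piecewise-constant on the blocks $[n_k,n_{k+1})$, feed it into Theorem~\ref{thm: mini FISTA convergence}, telescope the inner-product sum so that only the jumps at the refinement times $n_1,\dots,n_K$ survive, and bound the block sums of $t_n$ via $t_n\leq n$ (Lemma~\ref{app: tn upper bound}) and the arithmetic-series identity to land on $\tfrac12(n_k^2-n_{k-1}^2)$ and $\tfrac12((N+1)^2-n_K^2)$. The bookkeeping points you flag as delicate are exactly the ones the paper handles, so nothing further is needed.
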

	\begin{proof}
		This is just a telescoping of the right-hand side of \eqref{app:eq: FISTA inequality} with the introduction of $n_k$ and simplification $\var2_n = \tilde{\var2}_k$,
		\begin{equation}
			\tfrac12\norm{\var2_N}^2 + \sum^N_{n=1} t_n\op{E}_0(\var2_n) + \IP{\var1_{n-1}}{\var2_{n-1}-\var2_n} = \tfrac12\norm{\tilde{\var2}_K}^2 + \sum_{n=n_K}^Nt_n\op{E}_0(\tilde{\var2}_K) 
			+ \sum_{k=1}^K\sum_{n=n_{k-1}}^{n_k-1}t_n\op{E}_0(\tilde{\var2}_{k-1}) 
			+ \IP{\var1_{n_k-1}}{\tilde{\var2}_{k-1}-\tilde{\var2}_k}.
		\end{equation}
		By Lemma~\ref{app: tn upper bound}, $t_n\leq n$ so we can further simplify
		$$\sum_{n=\vars3}^{\vars4-1}t_n \leq \sum_{n=\vars3}^{\vars4-1}n = (\vars4-\vars3)\frac{\vars4-1+\vars3}{2} \leq \frac{\vars4^2-\vars3^2}{2}$$
		to get the required bound.
		\qed\end{proof}
	
	\subsection{Proof for Step 4}
	\begin{lemma}[Lemma~\ref{thm: sufficiently fast}]\label{app:thm: sufficiently fast}
		\Paste{thm: sufficiently fast}
	\end{lemma}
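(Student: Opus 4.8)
The plan is to feed the bound of Lemma~\ref{thm: mini exponential FISTA convergence} into an induction on the milestone index $K$. The crucial observation is that, among the terms on the right-hand side of that lemma, the only one that refers to iterates produced before block $K$ is the shock sum $\sum_{k=1}^K\IP{\var1_{n_k-1}}{\tilde{\var2}_{k-1}-\tilde{\var2}_k}$, and that this sum couples to the earlier blocks only through the quantities $\norm{\var1_{n_k-1}}$, which Lemma~\ref{thm: mini exponential FISTA convergence} itself controls through the term $\tfrac12\norm{\var1_N-\tilde{\var2}_K}^2$ on its left-hand side; the whole argument is therefore a bootstrap.

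Concretely, fix $K$ and $N\in[n_K,n_{K+1})$ and write $S_K$ for the right-hand side of Lemma~\ref{thm: mini exponential FISTA convergence}, bounded uniformly over the block by replacing $(N+1)^2$ with $n_{K+1}^2$. Since all three terms on the left are nonnegative, the lemma gives simultaneously $t_N^2\op{E}_0(\var0_N)\le S_K$ and $\tfrac12\norm{\var1_N-\tilde{\var2}_K}^2\le S_K$. First I would dispatch the easy pieces of $S_K$ in the case $\aU>1$: the constant $C$ is fixed; $\norm{\tilde{\var2}_K}^2\lesssim\aU^{2K}$ by Definition~\ref{def: exp subspaces}; using $n_k^2\lesssim\aE^k\aU^{2k}$ and $\op{E}_0(\tilde{\var2}_K)\lesssim\aE^{-K}$ one has $\tfrac12(n_{K+1}^2-n_K^2)\op{E}_0(\tilde{\var2}_K)\lesssim n_{K+1}^2\aE^{-K}\lesssim\aE^{K+1}\aU^{2K+2}\aE^{-K}\lesssim\aU^{2K}$; and $\sum_{k=1}^K\tfrac12(n_k^2-n_{k-1}^2)\op{E}_0(\tilde{\var2}_{k-1})\lesssim\sum_{k=1}^K\aE^k\aU^{2k}\aE^{-(k-1)}=\aE\sum_{k=1}^K\aU^{2k}\lesssim\aU^{2K}$, the last step being a geometric series with ratio $\aU^{-2}<1$ dominated by its final term.

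For the shock sum, Cauchy--Schwarz together with $\norm{\tilde{\var2}_{k-1}}+\norm{\tilde{\var2}_k}\lesssim\aU^k$ gives $\bigl|\sum_{k=1}^K\IP{\var1_{n_k-1}}{\tilde{\var2}_{k-1}-\tilde{\var2}_k}\bigr|\lesssim\sum_{k=1}^K\norm{\var1_{n_k-1}}\,\aU^k$, so it remains to bound $\norm{\var1_{n_k-1}}$. Here the induction enters: assuming $S_j\lesssim\aU^{2j}$ for every $j<K$, apply Lemma~\ref{thm: mini exponential FISTA convergence} at $N=n_k-1\in[n_{k-1},n_k)$ (milestone index $k-1<K$) to get $\norm{\var1_{n_k-1}}\le\norm{\tilde{\var2}_{k-1}}+\sqrt{2S_{k-1}}\lesssim\aU^{k-1}$, whence the shock sum is $\lesssim\sum_{k=1}^K\aU^{2k-1}\lesssim\aU^{2K}$. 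Collecting all the estimates yields $S_K\lesssim\aU^{2K}$, which closes the induction; the constants close because the only feedback from earlier blocks passes through the sublinear map $S_{k-1}\mapsto\sqrt{S_{k-1}}$ and every series in sight is geometric with ratio below $1$. Dividing $t_N^2\op{E}_0(\var0_N)\le S_K$ by $t_N^2\simeq N^2$ then gives $\op{E}_0(\var0_N)\lesssim\aU^{2K}/N^2$ for $N\in[n_K,n_{K+1})$.

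The borderline case $\aU=1$ runs along the same lines with $\aU^{2K}=1$ throughout: the two terms $\tfrac12(n_{K+1}^2-n_K^2)\op{E}_0(\tilde{\var2}_K)$ and $\sum_{k=1}^K\tfrac12(n_k^2-n_{k-1}^2)\op{E}_0(\tilde{\var2}_{k-1})$ are now controlled by the hypothesis $\sum_k n_k^2\aE^{-k}<\infty$, and the shock sum by $\bigl(\sup_k\norm{\var1_{n_k-1}}\bigr)\sum_k\norm{\tilde{\var2}_k-\tilde{\var2}_{k+1}}<\infty$, giving $S_K\lesssim1$ uniformly in $K$ and hence $\op{E}_0(\var0_N)\lesssim N^{-2}$. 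I expect the only real obstacle to be precisely this circularity — $S_K$ cannot be estimated without estimating the norms $\norm{\var1_{n_k-1}}$, which in turn require the smaller $S_{k-1}$ — and its resolution is the observation that the dependence is through a square root and through summable series; the remaining work (including the base case for small $K$, which is bounded by absolute constants) is routine bookkeeping.
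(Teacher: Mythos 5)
Your proposal is correct and follows essentially the same route as the paper's proof: both feed the milestone bound of Lemma~\ref{thm: mini exponential FISTA convergence} into an induction over the block index, closing the loop by observing that the shock sum depends on earlier blocks only through $\norm{\var1_{n_k-1}}$ (equivalently $\norm{\var1_{n_k-1}-\tilde{\var2}_{k-1}}$), which the left-hand term $\tfrac12\norm{\var1_N-\tilde{\var2}_K}^2$ of the same inequality controls, and that the induction closes because the feedback enters only through a square root against geometric series. The paper phrases the induction hypothesis as $\norm{\var1_{n_k-1}-\tilde{\var2}_{k-1}}\le C_2\aU^k$ with an explicit choice of $C_2$ satisfying $\tfrac12 C_2^2\ge\tfrac{C_1}{\aU^2-1}(C_2+\aU^2)$, but this is the same bootstrap you describe.
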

	\begin{proof}
		Starting from Lemma~\ref{app:thm: mini exponential FISTA convergence} we have
		\begin{align}
			t_N^2\op{E}_0(\var0_N) + \tfrac12\norm{\var1_N-\tilde{\var2}_K}^2 &\leq C + \frac{\norm{\tilde{\var2}_K}^2}{2} + \frac{(N+1)^2-n_K^2}{2}\op{E}_0(\tilde{\var2}_K) \notag
			\\&\hspace{70pt}+ \sum_{k=1}^K \frac{n_k^2-n_{k-1}^2}{2}\op{E}_0(\tilde{\var2}_{k-1}) + \IP{\var1_{n_k-1}}{\tilde{\var2}_k-\tilde{\var2}_{k+1}}
			\\& \leq C + \frac{\norm{\tilde{\var2}_K}^2}{2} + \frac{n_{K+1}^2}{2}\op{E}_0(\tilde{\var2}_K) \notag
			\\&\hspace{40pt}+\sum_{k=1}^K \frac{n_k^2}{2}\op{E}_0(\tilde{\var2}_{k-1}) + \IP{\var1_{n_k-1}-\tilde{\var2}_{k-1}+\tilde{\var2}_{k-1}}{{\tilde{\var2}_{k-1}-\tilde{\var2}_k}}.
		\end{align}
		The inductive step now depends on the value of $\aU$.
		\begin{description}
			\item[Case $\aU>1$:] We simplify the inequality
			\begin{align}
				t_N^2\op{E}_0(\var0_N) + \tfrac12\norm{\var1_N-\tilde{\var2}_K}^2 &\lesssim \aU^{2K} + n_{K+1}^2\aE^{-K} + \sum_{k=1}^{K}n_k^2\aE^{-k} + \aU^{k}\norm{\var1_{n_k-1}-\tilde{\var2}_{k-1}} + \aU^{2k}
				\\&\leq C_1\left[\aU^{2K+2} + \sum_{k=1}^{K}\aU^{2k} + \aU^{k}\norm{\var1_{n_k-1}-\tilde{\var2}_{k-1}}\right]
			\end{align}
			for some $C_1>C$. Choose $C_2\geq \norm{\var1_{n_1-1}-\tilde{\var2}_{1-1}}\aU^{-1}$ such that 
			\begin{equation}
				\frac12C_2^2 \geq \frac{C_1}{\aU^2-1}(C_2+\aU^2).
			\end{equation}
			Assume $\norm{\var1_{n_k-1}-\tilde{\var2}_{k-1}}\leq C_2\aU^k$ for $1\leq k\leq K$ (trivially true for $K=1$), then for $N=n_{K+1}-1$ we have 
			\begin{align}
				\tfrac12\norm{\var1_{n_{K+1}-1}-\tilde{\var2}_K}^2 &\leq C_1\left[\aU^{2K+2} + \sum_{k=1}^{K}\aU^{2k} + \aU^{k}\norm{\var1_{n_k-1}-\tilde{\var2}_{k-1}}\right]
				\\&\leq C_1\left[\aU^{2K+2} + (1+C_2)\frac{\aU^{2K+2}}{\aU^2-1}\right]
				\\&\leq \frac{C_1\aU^{2K+2}}{\aU^2-1}\left(\aU^2+C_2\right) \leq \tfrac12(C_2\aU^{K+1})^2.
			\end{align}
			
			\item[Case $\aU=1$:] Denote $\vars4_k = \norm{\tilde{\var2}_k-\tilde{\var2}_{k+1}}$ and note that $\norm{\tilde{\var2}_{k-1}} \leq \norm{\tilde{\var2}_{0}} + \sum_0^\infty \vars4_k\lesssim 1$. We therefore bound
			\begin{align}
				t_N^2\op{E}_0(\var0_N) + \tfrac12\norm{\var1_N-\tilde{\var2}_K}^2 &\lesssim 1 + n_{K+1}^2\aE^{-K} + \sum_{k=1}^{K}n_k^2\aE^{-k} + (\norm{\var1_{n_k-1}-\tilde{\var2}_{k-1}} + 1)b_k
				\\&\leq C_1\left[1 + \sum_{k=1}^{K}\norm{\var1_{n_k-1}-\tilde{\var2}_{k-1}}\vars4_{k-1}\right]
			\end{align}
			for some $C_1>0$. Choose $C_2\geq \frac{\norm{\var1_{n_1-1}-\tilde{\var2}_{1-1}}}{\sum_0^\infty \vars4_k}$ such that 
			\begin{equation}
				\frac12C_2^2 \geq C_1\left(1+C_2\sum_0^\infty\vars4_k\right).
			\end{equation}
			Assume $\norm{\var1_{n_k-1}-\tilde{\var2}_{k-1}}\leq C_2$ for $1\leq k\leq K$ (trivially true for $K=1$), then for $N=n_{K+1}-1$ we have 
			\begin{equation}
				\tfrac12\norm{\var1_{n_{K+1}-1}-\tilde{\var2}_K}^2 \leq C_1\left[1 + \sum_{k=1}^{K}\norm{\var1_{n_k-1}-\tilde{\var2}_{k-1}}\vars4_{k-1}\right]
				\leq C_1\left(1+C_2\sum_0^\infty\vars4_k\right) \leq \frac{C_2^2}{2}
			\end{equation}
		\end{description}
		In both cases, the induction on $\norm{\var1_{n_{K+1}-1}-\tilde{\var2}_K}$ holds for all $K$, and we have $t_N^2\op{E}_0(\var0_N) \leq \frac12C_2^2\aU^{2K}$ for all $N<n_K-1$.
		\qed\end{proof}
	
	\subsection{Proof for Step 5}
	\begin{lemma}[Lemma~\ref{thm: sufficiently slow}]\label{app:thm: sufficiently slow}
		\Paste{thm: sufficiently slow}
	\end{lemma}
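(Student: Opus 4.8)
The plan is to turn the two-index bound $\op{E}_0(\var0_N)\lesssim \aU^{2K}/N^2$ into a genuine rate in $N$ alone, by exploiting the growth hypothesis $n_K^2\gtrsim \aE^K\aU^{2K}$ to control the ``overfitting factor'' $\aU^{2K}$ in terms of $N$. The whole argument is a logarithmic change of variables that matches the definition of $\kappa$; there is no hard analytic estimate.

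First I would fix a large $N$ and let $K\in\F N$ be the unique index with $n_K\le N<n_{K+1}$. Since $n_K\le N$, the hypothesis immediately gives
$$(\aE\aU^2)^K=\aE^K\aU^{2K}\lesssim n_K^2\le N^2.$$
If $\aU=1$ this already shows $\aU^{2K}=1$, so $\op{E}_0(\var0_N)\lesssim N^{-2}$, which is the claimed bound with $\kappa=0$; hence I may assume $\aE\aU^2>1$, so that $\kappa=\frac{\log\aU^2}{\log\aE+\log\aU^2}=\frac{\log\aU^2}{\log(\aE\aU^2)}\in[0,1)$ is well defined. The key algebraic observation is that $\kappa$ is exactly the exponent for which $(\aE\aU^2)^{K\kappa}=\aU^{2K}$, because $\kappa\log(\aE\aU^2)=\log\aU^2$. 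Raising the previous display to the power $\kappa$ — monotone, and the hidden constant is only raised to a power $\le 1$, hence stays a constant — therefore yields
$$\aU^{2K}=\bigl[(\aE\aU^2)^K\bigr]^{\kappa}\lesssim (N^2)^{\kappa}=N^{2\kappa}.$$
Substituting into the assumed bound gives
$$\op{E}_0(\var0_N)\lesssim\frac{\aU^{2K}}{N^2}\lesssim\frac{N^{2\kappa}}{N^2}=\frac{1}{N^{2(1-\kappa)}},$$
and since this holds for every sufficiently large $N$ (with $K=K(N)$), the estimate is uniform in $N\in\F N$, which is the claim.

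I expect the only genuine care needed to be bookkeeping: tracking how the implicit $\lesssim$ and $\gtrsim$ constants propagate through the exponentiation by $\kappa$ (they combine into a single constant precisely because $0\le\kappa<1$), and isolating the degenerate parameter regimes — $\aU=1$, which returns the classical $\kappa=0$ rate, and the vacuous case $\aE=1$, where $\kappa=1$ — so that $\kappa$ is meaningful. Beyond that the content is entirely the identity $\aU^{2K}=(\aE\aU^2)^{K\kappa}$ together with $N^2\gtrsim(\aE\aU^2)^K$, i.e. exactly how $\kappa$ was defined in Step~\ref{step: step 5 sufficiently slow}.
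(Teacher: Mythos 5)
Your proof is correct and is essentially identical to the paper's: both rest on the identity $(\aE\aU^2)^\kappa=\aU^2$, which gives $\aU^{2K}=\bigl((\aE\aU^2)^K\bigr)^\kappa\lesssim n_K^{2\kappa}\leq N^{2\kappa}$ and hence the claimed rate. Your extra remarks on the degenerate regimes and on how the implicit constants propagate through exponentiation by $\kappa$ are harmless additions to the same argument.
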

	\begin{proof}
		The proof is direct computation, note that
		\begin{equation}\label{eq: aEaU^kappa}
			(\aE\aU^2)^\kappa = \exp\left(\kappa\log(\aE\aU^2)\right) = \exp(\log\aU^2) = \aU^2,
		\end{equation}
		therefore 
		\begin{equation}
			\aU^{2K} = \left((\aE\aU^2)^K\right)^\kappa \lesssim n_K^{2\kappa} \leq N^{2\kappa},
		\end{equation}
		so $\op{E}_0(\var0_N)\lesssim N^{-2(1-\kappa)}$
		as required.
		\qed\end{proof}

	\subsection{Proofs for Step 6}
	\begin{theorem}[Theorem~\ref{thm: stronger exponential FISTA convergence}]\label{app:thm: stronger exponential FISTA convergence}
		\Paste{thm: stronger exponential FISTA convergence}
	\end{theorem}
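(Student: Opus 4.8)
The plan is to feed the ``refinement without overfitting'' estimate of Lemma~\ref{thm: sufficiently fast} into an elementary optimisation, exploiting the fact that here the comparison sequence consists of genuine iterates of the algorithm. First I would verify that the hypotheses of Lemma~\ref{thm: sufficiently fast} hold with $\tilde{\var2}_k=\var0_{n_k-1}$: by assumption $(\F{U}^n)_{n\in\F N}$ satisfies \eqref{eq: refining subspace definition}, $(n_k)_{k\in\F N}$ is monotone increasing, $\tilde{\var2}_k\in\F{U}^{n_k}\cap\ldots\cap\F{U}^{n_{k+1}-1}$, $(\tilde{\var2}_k)_{k\in\F N}$ is an \aUaE\ with $\aU>1$, and $n_k^2\lesssim\aE^k\aU^{2k}$ --- this is exactly the list of conditions of Lemma~\ref{thm: sufficiently fast}. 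Hence that lemma gives
$$\op{E}_0(\var0_N)\lesssim\frac{\aU^{2K}}{N^2}\qquad\text{for all }n_K\leq N<n_{K+1}.$$

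Next I would use the special structure $\tilde{\var2}_K=\var0_{n_K-1}$. Since $(\tilde{\var2}_k)_k$ is an \aUaE, Definition~\ref{def: exp subspaces} gives $\op{E}_0(\var0_{n_K-1})=\op{E}_0(\tilde{\var2}_K)\lesssim\aE^{-K}$. As $n_K-1<n_K\leq N<n_{K+1}$, both $n_K-1$ and $N$ lie in $\{0,1,\ldots,N\}$, so $\op{E}_0(\var0_{n_K-1})$ and $\op{E}_0(\var0_N)$ both appear in the minimum on the left-hand side of the claim; therefore
$$\min_{n\leq N}\op{E}_0(\var0_n)\;\leq\;\min\left(\op{E}_0(\var0_{n_K-1}),\,\op{E}_0(\var0_N)\right)\;\lesssim\;\min\left(\aE^{-K},\,\frac{\aU^{2K}}{N^2}\right).$$
(The finitely many indices $N<n_0$, for which the second estimate is not available, are harmless for a $\lesssim$-bound.)

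The last step is the optimisation in the spirit of Lemma~\ref{thm: sufficiently slow}, except that we must work with the given $K$ rather than a tuned one, since only the upper bound on $n_k$ is assumed. Regard $t\mapsto\min(\aE^{-t},\aU^{2t}N^{-2})$ as a function of a real variable $t\geq0$; its first argument is non-increasing and its second non-decreasing, so its value at $t=K$ is at most its value at the crossover point $t^*$ solving $\aE^{-t^*}=\aU^{2t^*}N^{-2}$, namely $t^*=\frac{2\log N}{\log\aE+\log\aU^2}$. Substituting gives $\aE^{-t^*}=N^{-2\log\aE/(\log\aE+\log\aU^2)}=N^{-2(1-\kappa)}$ with $\kappa=\frac{\log\aU^2}{\log\aE+\log\aU^2}$, the exponent already appearing in \eqref{eq: aEaU^kappa}. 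Combining with the previous display yields $\min_{n\leq N}\op{E}_0(\var0_n)\lesssim N^{-2(1-\kappa)}$ uniformly in $N$.

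I do not expect a serious obstacle: all of the analytic effort is already contained in Lemmas~\ref{thm: sufficiently fast} and~\ref{thm: sufficiently slow}. The one genuinely new point --- and the reason the conclusion concerns $\min_{n\leq N}\op{E}_0(\var0_n)$ rather than $\op{E}_0(\var0_N)$ --- is the interplay of the two bounds when only $n_k^2\lesssim\aE^k\aU^{2k}$ is assumed: if the subsets are refined too slowly then $K$ is large relative to $N$ and $\aU^{2K}/N^2$ need not be small, but then $\aE^{-K}$ is small and, crucially, this small energy was already realised at the earlier iterate $\var0_{n_K-1}$, so the running minimum still decays at the advertised rate. Turning this trade-off into the stated estimate is precisely the one-line monotonicity argument above.
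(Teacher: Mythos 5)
Your proposal is correct, but it is noticeably heavier than the paper's own argument, which never invokes Lemma~\ref{thm: sufficiently fast} at all. The paper's proof is essentially two lines: given $N$, choose $k$ so that $(\aE\aU^2)^{k-1}\lesssim N^2<(\aE\aU^2)^{k}$ (up to the constant $C$ in $n_k^2\leq C\aE^k\aU^{2k}$); then $n_{k-1}\leq N$, so $\var0_{n_{k-1}-1}$ is one of the iterates appearing in the running minimum, and
\begin{equation*}
\min_{n\leq N}\op{E}_0(\var0_n)\;\leq\;\op{E}_0(\tilde{\var2}_{k-1})\;\lesssim\;\aE^{-k}\;=\;(\aE\aU^2)^{-k(1-\kappa)}\;\lesssim\;N^{-2(1-\kappa)},
\end{equation*}
using \eqref{eq: aEaU^kappa}. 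In other words, the $\aE^{-K}$ branch of your two-sided bound already suffices on its own, because $N<n_{K+1}\lesssim(\aE\aU^2)^{(K+1)/2}$ forces $\aE^{-K}\lesssim N^{-2(1-\kappa)}$ directly; your crossover optimisation $\min(\aE^{-K},\aU^{2K}N^{-2})\leq N^{-2(1-\kappa)}$ is a valid alternative way to close the argument, but the $\aU^{2K}N^{-2}$ branch (and hence the whole appeal to Lemma~\ref{thm: sufficiently fast}, which is the only place the intersection condition $\tilde{\var2}_k\in\F{U}^{n_k}\cap\ldots\cap\F{U}^{n_{k+1}-1}$ and the bound $\norm{\tilde{\var2}_k}\lesssim\aU^k$ are actually used) is logically superfluous for the stated conclusion. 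What your route buys in exchange is a genuine last-iterate estimate $\op{E}_0(\var0_N)\lesssim\aU^{2K}/N^2$ on each window $[n_K,n_{K+1})$, which is stronger information than the running minimum alone and is consistent with the paper's remark that monotone inclusions restore last-iterate guarantees; what the paper's route buys is brevity and weaker hypotheses. Both identify the same key point, namely that $\tilde{\var2}_k=\var0_{n_k-1}$ is itself an iterate whose small energy has already been realised by time $N$. (Incidentally, the sandwich in the paper's proof reads $C\aE^{k-1}\aU^{2k-2}\leq N<C\aE^k\aU^{2k}$, which only yields $N^{-(1-\kappa)}$; it should be $N^2$ in place of $N$, exactly as your accounting implies.)
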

	\begin{proof}
		Let $C>0$ satisfy $n_k^2\leq C\aE^k\aU^{2k}$ for each $k\in\F N$. Fix $N>C$ and choose $k$ such that $C\aE^{k-1}\aU^{2k-2}\leq N<C\aE^k\aU^{2k}$. By construction, and using the equality from \eqref{eq: aEaU^kappa}, we have 
		\begin{equation}
			\min_{n\leq N}\op{E}_0(\var0_N)\leq \op{E}_0(\tilde{\var2}_{k-1}) \lesssim \aE^{-k} = (\aE\aU^2)^{-k(1-\kappa)}<C^{\kappa-1}N^{-2(1-\kappa)}
		\end{equation}
		as required.
		\qed\end{proof}

	\begin{lemma}[Lemma~\ref{thm: practical refinement criteria}]\label{app:thm: practical refinement criteria}
		\Paste{thm: practical refinement criteria}
	\end{lemma}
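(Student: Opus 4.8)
Since the bound $\norm{\tilde{\var2}_k}\lesssim\aU^k$ is assumed in all four cases, by Definition~\ref{def: exp subspaces} the whole lemma reduces to verifying $\op{E}_0(\tilde{\var2}_k)\lesssim\aE^{-k}$ under each criterion. Case~(1) is exactly this, so nothing is needed. For case~(2) the plan is a one-line triangle inequality: write $\op{E}_0(\tilde{\var2}_k)=\bigl(\op{E}_0(\tilde{\var2}_k)-\op{E}_0(\tilde{\F{U}}^{k-1})\bigr)+\op{E}_0(\tilde{\F{U}}^{k-1})$, bound the first bracket by $\vars1\aE^{-k}$ (second hypothesis of~(2)) and the second term by $\vars1\aE^{-(k-1)}=\aE\vars1\aE^{-k}$ (first hypothesis of~(2) at index $k-1$), giving $\op{E}_0(\tilde{\var2}_k)\le(1+\aE)\vars1\aE^{-k}$ for $k\ge2$, the single index $k=1$ contributing a finite constant which $\lesssim$ ignores. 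Only $\op{E}_0\ge0$ is used, so the sign of the ``discrete gap'' $\op{E}_0(\tilde{\var2}_k)-\op{E}_0(\tilde{\F{U}}^{k-1})$ plays no role.

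Cases~(3) and~(4) I would handle by the same mechanism: the subgradient inequality turns an energy gap into a linear pairing which the refinement hypothesis controls against $\Norm{\cdot}$. For~(3), fix a minimiser $\var0^*\in\F{U}$ of $\op{E}$ over $\F H$ --- or, if the minimum is not attained, a minimising sequence $(\var0^{(j)})_j$ in $\F{U}$ (the effective domain of $\op{E}$ lying in $\F{U}$), which eventually sits in the sublevel set $\{\var0\st\op{E}(\var0)\le\inf_{\F H}\op{E}+1\}$ and is therefore $\Norm{\cdot}$-bounded. For every $\var1\in\partial\op{E}(\tilde{\var2}_k)$ convexity gives $\op{E}(\tilde{\var2}_k)-\op{E}(\var0^*)\le\IP{\var1}{\tilde{\var2}_k-\var0^*}\le|\IP{\var1}{\tilde{\var2}_k-\var0^*}|$; taking the infimum over $\var1$ and then evaluating the hypothesis of~(3) at the single element $\var0=\tilde{\var2}_k-\var0^*\in\F{U}$ yields $\op{E}_0(\tilde{\var2}_k)\le\Norm{\tilde{\var2}_k-\var0^*}\,\vars1\aE^{-k}$, and $\Norm{\tilde{\var2}_k-\var0^*}\le\sup_k\Norm{\tilde{\var2}_k}+\Norm{\var0^*}<\infty$ by the standing hypotheses, so $\op{E}_0(\tilde{\var2}_k)\lesssim\aE^{-k}$ (no minimax exchange is needed, since the supremum is used only at this one direction). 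Case~(4) I would run inside $\tilde{\F{U}}^k$: compare $\tilde{\var2}_k$ with a minimiser $\var0^\sharp_k$ of $\op{E}$ over $\tilde{\F{U}}^k$ (which exists as $\tilde{\F{U}}^k$ is closed with $\op{E}$ having bounded sublevel sets, or again via a minimising sequence), use $\var1\in\F V^k=\partial(\op{E}|_{\tilde{\F{U}}^k})(\tilde{\var2}_k)$ to get $\op{E}(\tilde{\var2}_k)-\op{E}(\var0^\sharp_k)\le\Norm{\tilde{\var2}_k-\var0^\sharp_k}\,\vars1\aE^{-k}$ (both points lie in $\tilde{\F{U}}^k$, so the hypothesis of~(4) applies), and note that $\op{E}(\var0^\sharp_k)=\inf_{\F H}\op{E}+\op{E}_0(\tilde{\F{U}}^k)\le\inf_{\F H}\op{E}+\vars1$ forces $\var0^\sharp_k$ into a $k$-independent sublevel set, bounding $\Norm{\var0^\sharp_k}$ uniformly. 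Then $\op{E}_0(\tilde{\var2}_k)=\bigl(\op{E}(\tilde{\var2}_k)-\op{E}(\var0^\sharp_k)\bigr)+\op{E}_0(\tilde{\F{U}}^k)\lesssim\aE^{-k}$, using $\op{E}_0(\tilde{\F{U}}^k)\le\vars1\aE^{-k}$. In both cases the hypotheses implicitly force $\partial\op{E}(\tilde{\var2}_k)$, resp.\ $\F V^k$, to be non-empty, since an infimum over the empty set would be $+\infty$.

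The genuinely delicate point is the interface between the Hilbert space $\F H$ in which the algorithm runs and the Banach space $(\F{U},\Norm{\cdot})$ in which coercivity is measured: the subgradient inequalities of~(3)--(4) are statements about $\op{E}$ on $\F{U}$, so I must make sure the comparison point ($\var0^*$, the minimising sequence, or $\var0^\sharp_k$) is admissible for the pairing $\IP{\cdot}{\cdot}$ and that possible non-attainment of the minimum does not break the estimate. This is exactly where the two extra standing hypotheses enter --- $\sup_k\Norm{\tilde{\var2}_k}<\infty$ and $\Norm{\cdot}$-boundedness of the sublevel sets of $\op{E}$ --- since together they produce a uniform bound on $\Norm{\tilde{\var2}_k-(\text{comparison point})}$; once that bound is secured, the remaining steps are the elementary chains above. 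If one simply assumes, as in Section~\ref{sec: general examples}, that $\inf_{\F H}\op{E}$ is attained at some $\var0^*\in\F{U}$, then cases~(3)--(4) collapse to one-line computations.
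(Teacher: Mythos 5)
Your proposal is correct and follows essentially the same route as the paper: reduce everything to showing $\op{E}_0(\tilde{\var2}_k)\lesssim \aE^{-k}$, handle (2) by the triangle inequality against $\op{E}_0(\tilde{\F{U}}^{k-1})$, and handle (3)--(4) via the subgradient inequality paired with a comparison point (minimiser or minimising sequence) whose $\Norm\cdot$-norm is uniformly bounded by the sublevel-set and $\sup_k\Norm{\tilde{\var2}_k}$ hypotheses. The only differences are cosmetic --- you track the $k$ versus $k-1$ index in case (2) a little more explicitly, and you evaluate the hypothesis of (3) at the single direction $\tilde{\var2}_k-\var0^*$ where the paper takes a supremum over the ball of radius $R$ --- neither of which changes the argument.
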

	\begin{proof}
		The conditions for $\aU$ in Definition~\ref{def: exp subspaces} are already met, it remains to be shown that $\op{E}_0(\tilde{\var2}_{k}) \leq C\aE^{-k}$ for some fixed $C>0$. For cases (3) and (4), fix $R>0$ such that both $\{\tilde{\var2}_k\}_{k\in\F N}$ and the sublevel set $ \{\var0\in\F{U}\st\op{E}_0(\var0)\leq 1+\vars1\} $ are contained in the ball of radius $R$. Any minimising sequences of $\op{E}$ in $\F{U}$ or $\tilde{\F{U}}^k$ are contained in this ball. We can therefore compute $C$ in each case:
		\begin{itemize}
			\item[(1)] $\op{E}_0(\tilde{\var2}_k)\leq \vars1\aE^{-k}$, so $C=\vars1$ suffices.
			\item[(2)] $\op{E}_0(\tilde{\var2}_k)\leq \op{E}_0(\tilde{\F{U}}^k) + \vars1\aE^{-k} \leq (\aE +1)\beta\aE^{-k}$, so $C=(\aE +1)\beta$ suffices.
			\item[(3)] $\op{E}_0(\tilde{\var2}_k)-\op{E}_0(\var0)\leq \inf_{\var1\in\partial\op{E}(\tilde{\var2}_k)}\IP{\var1}{\tilde{\var2}_k-\var0}\leq 2R\vars1\aE^{-k}$ for any $\var0\in\F{U}$ with $\Norm{\var0}\leq R$. Maximising over $\var0$ gives $C=2R\vars1$
			\item[(4)] $\op{E}_0(\tilde{\var2}_k)-\op{E}_0(\var0)\leq \inf_{\var1\in\partial\op{E}(\tilde{\var2}_k)}\IP{\var1}{\tilde{\var2}_k-\var0}\leq 2R\vars1\aE^{-k}$ for any $\var0\in\tilde{\F{U}}^k$ with $\Norm{\var0}\leq R$, so $\op{E}_0(\tilde{\var2}_k) \leq \op{E}_0(\tilde{\F{U}}^k) + 2R\vars1\aE^{-k}$ and $C=(1+2R)\vars1$.
		\end{itemize}
		This completes the requirements of Definition~\ref{def: exp subspaces}.
		\qed\end{proof}

	\section{Proof of \texorpdfstring{Theorem~\ref{thm: generic a_U and a_E}}{rates for general finite element spaces}}\label{app: generic a_U and a_E}
	First we recall the setting of Definition~\ref{def: finite element space}, fix: $p\geq0$, $q\in[1,\infty]$, $\meshsize\in(0,1)$, $N\in\F N$, connected and bounded domain $\Domain\subset\R^d$, and $\var0^*\in\argmin_{\var0\in\F U}\op{E}(\var0)$. We assume that $\F H=L^2(\Domain)$, $\norm{\cdot}_q\lesssim\Norm\cdot$, and there exist spaces $(\tilde{\F U}^k)_{k\in\F N}$ with $\tilde{\F{U}}^k\subset \F{U}$ containing a sequence $(\tilde{\var2}_k\in \tilde{\F U}^k)_{k\in\F N}$ such that $\Norm{\tilde{\var2}_k - \var0^*} \lesssim \meshsize^{kp}$, c.f. \eqref{eq: projection of u*}. Furthermore, there exists constant $c_\alpha>0$ and meshes $\F M^k$ such that:
	\begin{gather}
		\exists\domain_0\subset\Domain\quad\text{such that}\quad \forall\domain\in\F{M}^k \quad \exists (\vars0_\domain,\vvars1_\domain)\in\R^{d\times d}\times\R^d \quad\text{such that}\quad \vec{x}\in\domain_0\iff \vars0_\domain\vec{x}+\vvars1_\domain\in\domain, \qquad\text{ and}
		\\\forall (\tilde{\var0},\domain)\in\tilde{\F{U}}^k\times\F M^k,\quad \exists \var0\in\tilde{\F U}^0 \quad \text{such that}\quad \op{det}(\alpha_\domain)\geq c_\alpha\meshsize^{kd} \quad\text{and}\quad \forall\vec{x}\in\domain_0,\ \var0(\vec x) = \tilde{\var0}(\alpha_\domain \vec{x}+\vec\beta_\domain). \label{eq: decomposition property}
	\end{gather}
	In this section, these assumptions will be summarised simply by saying that $\F H$ and $(\tilde{\F U}^k)_{k\in\F N}$ satisfy Definition~\ref{def: finite element space}.
	We prove Theorem~\ref{thm: generic a_U and a_E} as a consequence of Lemma~\ref{thm: pq to p'q'}, namely we compute exponents $p',q'$ with $\aU=\meshsize^{-q'}$ and $\aE=\meshsize^{-p'}$. These values are computed as the result of the following three lemmas. The first, Lemma~\ref{app:thm: generic a_U bound}, is a quantification of the equivalence between $L^q$ and $L^2$ norms on general sub-spaces. Lemma~\ref{app:thm: specific a_U bound} applies this result to finite-element spaces to compute the value of $q'$. Finally, Lemma~\ref{app:thm: generic a_E bound} then performs the computations for $p'$ depending on the smoothness properties of $\op{E}$.
	
	\begin{lemma}[Equivalence of norms for fixed $k$]\label{app:thm: generic a_U bound}
		Suppose $\F H= L^2(\Domain)$ for some connected, bounded domain $\Domain\subset\R^d$ and $\norm\cdot_q\leq C \Norm\cdot$ for some $q\in[1,\infty]$, $C>0$. 
		For any linear subspace $\tilde{\F{U}}\subset \F{U}$ and $\tilde{\var2}\in\tilde{\F U}$,
		\begin{equation}
				\norm{\tilde{\var2}}\leq \sup_{\var0,\tilde{\var0}\in\tilde{\F{U}}}\frac{\IP{\var0}{\tilde{\var0}}}{\norm{\var0}\, \Norm{\tilde{\var0}}}\Norm{\tilde{\var2}} \quad\text{where}\quad \sup_{\var0,\tilde{\var0}\in\tilde{\F{U}}}\frac{\IP{\var0}{\tilde{\var0}}}{\norm{\var0}\, \Norm{\tilde{\var0}}} \leq C^{-1}\splitln{|\Domain|^{\frac12-\frac1q}}{\text{ if }q\geq 2\text{, otherwise}}{|\Domain|^{1-\frac1q}\sup_{\var0\in\tilde{\F{U}}}\norm{\var0}_\infty/\norm{\var0}\qquad}{\text{ if }q\in[1,2).}
		\end{equation}
		
	\end{lemma}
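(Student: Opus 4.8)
The plan is to dispatch the two assertions separately, since both are elementary. The first inequality is immediate: assuming $\tilde{\var2}\neq 0$ (if $\tilde{\var2}=0$ it is trivial), I would take $\var0=\tilde{\var0}=\tilde{\var2}$ inside the supremum, for which the ratio equals $\norm{\tilde{\var2}}^{2}/(\norm{\tilde{\var2}}\,\Norm{\tilde{\var2}})=\norm{\tilde{\var2}}/\Norm{\tilde{\var2}}$; hence the supremum is $\geq\norm{\tilde{\var2}}/\Norm{\tilde{\var2}}$, and multiplying by $\Norm{\tilde{\var2}}$ gives the claim. So everything reduces to bounding the supremum itself. For that I would first invoke the hypothesis $\norm\cdot_q\lesssim\Norm\cdot$ (equivalently $\Norm{\tilde{\var0}}\gtrsim\norm{\tilde{\var0}}_q$) to replace $\Norm{\tilde{\var0}}$ in the denominator by a constant multiple of $\norm{\tilde{\var0}}_q$ --- this is where the constant $C$ enters --- reducing the task to an upper bound for $\IP{\var0}{\tilde{\var0}}/(\norm{\var0}\,\norm{\tilde{\var0}}_q)$ uniform over $\var0,\tilde{\var0}\in\tilde{\F{U}}$, and then split on the size of $q$.

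For $q\geq 2$ I would use Cauchy--Schwarz, $\IP{\var0}{\tilde{\var0}}\leq\norm{\var0}\,\norm{\tilde{\var0}}$, followed by H\"older on the bounded domain $\Domain$ (applied to $|\tilde{\var0}|^{2}$ against the constant function $1$), which gives $\norm{\tilde{\var0}}\leq|\Domain|^{\frac12-\frac1q}\norm{\tilde{\var0}}_q$; dividing by $\norm{\var0}\,\norm{\tilde{\var0}}_q$ leaves the subspace-independent constant $|\Domain|^{\frac12-\frac1q}$, as required.

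For $q<2$ Cauchy--Schwarz no longer closes the estimate (it would leave $\norm{\tilde{\var0}}/\norm{\tilde{\var0}}_q$, which is not controlled on a general subspace), so I would pair crudely instead: $\IP{\var0}{\tilde{\var0}}\leq\norm{\var0}_{\infty}\,\norm{\tilde{\var0}}_{1}$, and then use H\"older on the bounded domain, $\norm{\tilde{\var0}}_{1}\leq|\Domain|^{1-\frac1q}\norm{\tilde{\var0}}_q$. Dividing by $\norm{\var0}\,\norm{\tilde{\var0}}_q$ leaves $|\Domain|^{1-\frac1q}\,\norm{\var0}_{\infty}/\norm{\var0}$, and taking the supremum over $\var0\in\tilde{\F{U}}$ produces exactly the claimed quantity. (Finiteness of $\sup_{\var0\in\tilde{\F{U}}}\norm{\var0}_{\infty}/\norm{\var0}$ is no concern in the application, where $\tilde{\F{U}}=\tilde{\F{U}}^{k}$ is a finite-dimensional space of bounded functions.)

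I do not expect a genuine obstacle: the statement is essentially careful bookkeeping of H\"older and Cauchy--Schwarz on a bounded domain. The only points that need thought are the choice of pairing in each regime --- Cauchy--Schwarz for $q\geq 2$, the $L^{\infty}$--$L^{1}$ pairing for $q<2$ --- and the recognition that the ratio $\sup_{\var0\in\tilde{\F{U}}}\norm{\var0}_{\infty}/\norm{\var0}$ appearing for $q<2$ is genuinely unavoidable: once the finite-element self-similarity of Definition~\ref{def: finite element space} is inserted (in Lemma~\ref{app:thm: specific a_U bound}) this ratio scales like $\meshsize^{-kd/2}$ and produces $\aU=\sqrt{\meshsize^{-d}}$, whereas for $q\geq 2$ it is replaced by a harmless power of $|\Domain|$ and one gets $\aU=1$.
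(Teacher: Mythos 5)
Your proposal is correct and follows essentially the same route as the paper: the first inequality via the choice $\var0=\tilde{\var0}=\tilde{\var2}$, the replacement of $\Norm{\tilde{\var0}}$ by $\norm{\tilde{\var0}}_q$ via the hypothesis, and then H\"older on the bounded domain with the $L^\infty$--$L^1$ pairing for $q<2$. The only cosmetic difference is that for $q\geq2$ the paper pairs with exponents $(q,q^*)$ and then bounds $\norm{\var0}_{q^*}$ by $\norm{\var0}$, whereas you use Cauchy--Schwarz and then bound $\norm{\tilde{\var0}}$ by $\norm{\tilde{\var0}}_q$; both yield the same constant $|\Domain|^{\frac12-\frac1q}$.
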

	\begin{proof}	
		The first statement of the result is by definition, for each $\tilde{\var2}\in\tilde{\F{U}}\subset L^\infty(\Domain)\subset \F H$ we have
		$$ \norm{\tilde{\var2}} = \frac{\IP{\tilde{\var2}}{\tilde{\var2}}}{\norm{\tilde{\var2}}} \leq \sup_{\var0\in\tilde{\F{U}}} \frac{\IP{\var0}{\tilde{\var2}}}{\norm{\var0}\,\Norm{\tilde{\var2}}}\Norm{\tilde{\var2}} \leq \sup_{\var0,\tilde{\var0}\in\tilde{\F{U}}} \frac{\IP{\var0}{\tilde{\var0}}}{\norm{\var0}\,\Norm{\tilde{\var0}}}\Norm{\tilde{\var2}}. $$
		Recall $\Norm\cdot\geq C^{-1}\norm\cdot_q$. To go further we use H\"older's inequality. If $\frac1q+\frac{1}{q^*}=1$, then for any $\var0,\tilde{\var0}\in\tilde{\F U}$
		\begin{equation}
			\frac{\IP{\var0}{\tilde{\var0}}}{\norm{\var0}\,\Norm{\tilde{\var0}}} \leq C^{-1}\frac{\IP{\var0}{\tilde{\var0}}}{\norm{\var0}\,\norm{\tilde{\var0}}_q} 
			\leq C^{-1}\frac{\norm{\var0}_{q^*}}{\norm{\var0}}.
		\end{equation}
		If $q\geq2$ we use H\"older's inequality a second time:
		\begin{equation}
			\int_\Domain |\var0(\vec x)|^{q^*}\diff\vec{x} \leq \left(\int_\Domain 1\diff\vec{x}\right)^{1-q^*/2}\left(\int_\Domain |\var0(\vec x)|^2\diff\vec{x}\right)^{q^*/2} = \left(|\Domain|^{\frac{1}{2} -\frac{1}{q}}\norm{\var0}\right)^{q^*}.
		\end{equation}
		This confirms the inequality when $q\geq2$. If $q<2$, we can simply upper bound $\norm\cdot_{q^*}\leq |\Domain|^{\frac1{q^*}}\norm\cdot_\infty$ as required.
		\qed\end{proof}
	
	\begin{lemma}\label{app:thm: specific a_U bound}
		Suppose $\F H$ and $(\tilde{\F U}^k)_{k\in\F N}$ satisfy Definition~\ref{def: finite element space}, then
		\begin{enumerate}
			\item If $q\geq 2$, then $\norm{\tilde{\var2}_k} \lesssim 1$ (i.e. $q'=0$).
			\item If $q<2$ and $\sup_{\var0\in\tilde{\F{U}}^0}\frac{\norm{\var0}_{L^\infty(\domain_0)}}{\norm{\var0}_{L^2(\domain_0)}}<\infty$, then $\norm{\tilde{\var2}_k} \lesssim \meshsize^{-\frac{kd}{2}}$ (i.e. $q'=-\frac d2$).
		\end{enumerate}
	\end{lemma}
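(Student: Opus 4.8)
The plan is to reduce both statements to the abstract norm-equivalence estimate of Lemma~\ref{app:thm: generic a_U bound}, applied with $\tilde{\F U} = \tilde{\F U}^k$, and then to control the $k$-dependence of the constant appearing there by exploiting the self-similar structure of the finite-element spaces recorded in Definition~\ref{def: finite element space}. First I would note that $\Norm{\tilde{\var2}_k} \leq \Norm{\tilde{\var2}_k - \var0^*} + \Norm{\var0^*} \lesssim_{\var0^*} \meshsize^{kp} + \Norm{\var0^*} \lesssim 1$ uniformly in $k$, because $\meshsize \in (0,1)$, $p \geq 0$, and $\Norm{\var0^*} < \infty$; this uses only \eqref{eq: projection of u*}. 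So in both cases it remains to bound the constant $\sup_{\var0,\tilde{\var0} \in \tilde{\F U}^k} \IP{\var0}{\tilde{\var0}}/(\norm{\var0}\,\Norm{\tilde{\var0}})$ of Lemma~\ref{app:thm: generic a_U bound}, after which Lemma~\ref{thm: pq to p'q'} converts the resulting estimate into a value of $\aU = \meshsize^{-q'}$.

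For case (1), $q \geq 2$, Lemma~\ref{app:thm: generic a_U bound} already bounds this constant by $C^{-1}|\Domain|^{1/2-1/q}$, a fixed finite number depending only on $\Domain$ and the embedding constant $C$ in $\norm\cdot_q \leq C\Norm\cdot$. Hence $\norm{\tilde{\var2}_k} \leq C^{-1}|\Domain|^{1/2-1/q}\Norm{\tilde{\var2}_k} \lesssim 1$, i.e.\ $q' = 0$, and there is nothing more to do.

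For case (2), $q < 2$, Lemma~\ref{app:thm: generic a_U bound} reduces the problem to bounding the inverse-inequality constant $M_k \coloneqq \sup_{\tilde{\var0} \in \tilde{\F U}^k} \norm{\tilde{\var0}}_{L^\infty(\Domain)}/\norm{\tilde{\var0}}_{L^2(\Domain)}$, and the goal becomes $M_k \lesssim \meshsize^{-kd/2}$. Here I would use the $\meshsize$-refining decomposition \eqref{eq: decomposition property}: given $\tilde{\var0} \in \tilde{\F U}^k$ and a cell $\domain \in \F M^k$, there is $\var0 \in \tilde{\F U}^0$ with $\var0(\vec{x}) = \tilde{\var0}(\alpha_\domain \vec{x} + \vec\beta_\domain)$ for $\vec{x} \in \domain_0$, and since $\vec{x} \mapsto \alpha_\domain \vec{x} + \vec\beta_\domain$ is an affine bijection $\domain_0 \to \domain$, the change of variables gives $\norm{\tilde{\var0}}_{L^\infty(\domain)} = \norm{\var0}_{L^\infty(\domain_0)}$ and $\norm{\tilde{\var0}}_{L^2(\domain)}^2 = \op{det}(\alpha_\domain)\,\norm{\var0}_{L^2(\domain_0)}^2 \geq c_\alpha \meshsize^{kd}\norm{\var0}_{L^2(\domain_0)}^2$. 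With the hypothesis $C_0 \coloneqq \sup_{\var0 \in \tilde{\F U}^0} \norm{\var0}_{L^\infty(\domain_0)}/\norm{\var0}_{L^2(\domain_0)} < \infty$ this yields $\norm{\tilde{\var0}}_{L^\infty(\domain)} \leq C_0 (c_\alpha \meshsize^{kd})^{-1/2}\norm{\tilde{\var0}}_{L^2(\domain)} \leq C_0 c_\alpha^{-1/2}\meshsize^{-kd/2}\norm{\tilde{\var0}}_{L^2(\Domain)}$ for every cell; taking the supremum over $\domain \in \F M^k$, whose cells cover $\Domain$ and overlap only on null sets, gives $M_k \leq C_0 c_\alpha^{-1/2}\meshsize^{-kd/2}$. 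Feeding this and $\Norm{\tilde{\var2}_k} \lesssim 1$ back into Lemma~\ref{app:thm: generic a_U bound} produces $\norm{\tilde{\var2}_k} \leq C^{-1}|\Domain|^{1-1/q} M_k \Norm{\tilde{\var2}_k} \lesssim \meshsize^{-kd/2}$, as claimed.

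The main obstacle — really the only place any care is needed — is the change-of-variables bookkeeping in case (2): one must be sure that a single reference cell $\domain_0$, reference space $\tilde{\F U}^0$, and constant $c_\alpha$ serve for all cells at all refinement levels simultaneously (this is exactly what ``consistent meshes'' and the $\meshsize$-refining property encode), and that the cellwise $L^\infty$--$L^2$ comparison can be glued into a global one over $\Domain$ because the cells of each $\F M^k$ tile $\Domain$ up to null sets. Everything else is a direct application of Lemmas~\ref{app:thm: generic a_U bound} and~\ref{thm: pq to p'q'}.
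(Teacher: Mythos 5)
Your proposal is correct and follows essentially the same route as the paper's proof: bound $\Norm{\tilde{\var2}_k}\lesssim 1$ via \eqref{eq: projection of u*}, invoke Lemma~\ref{app:thm: generic a_U bound} directly for $q\geq 2$, and for $q<2$ use the decomposition property \eqref{eq: decomposition property} with the change of variables $\norm{\tilde{\var0}}_{L^2(\domain)}^2=\op{det}(\alpha_\domain)\norm{\var0}_{L^2(\domain_0)}^2\geq c_\alpha\meshsize^{kd}\norm{\var0}_{L^2(\domain_0)}^2$ to transfer the reference-cell $L^\infty$--$L^2$ bound to every cell of $\F M^k$. The cellwise-to-global gluing and the uniformity of $\domain_0$, $\tilde{\F U}^0$, and $c_\alpha$ across levels are handled exactly as in the paper.
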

	\begin{proof}
		Most of the conditions of Lemma~\ref{app:thm: generic a_U bound} are already satisfied. Furthermore observe that $\Norm{\tilde{\var2}_k}\lesssim \Norm{\var0^*}+\meshsize^{kp}\lesssim 1$. For the $q\geq2$ case, this is already sufficient to conclude $\norm{\tilde{\var2}_k}\lesssim 1$ from Lemma~\ref{app:thm: generic a_U bound}, as required.

		For the case $q<2$, from Lemma~\ref{app:thm: generic a_U bound} recall that we are required to bound
		\begin{equation}
			\sup_{\tilde{\var0}\in\tilde{\F{U}}^k}\frac{\norm{\tilde{\var0}}_\infty}{\norm{\tilde{\var0}}} = \sup_{\tilde{\var0}\in\tilde{\F{U}}^k}\sup_{\domain\in\F M^k} \frac{\norm{\tilde{\var0}}_{L^\infty(\domain)}}{\norm{\tilde{\var0}}_{L^2(\Domain)}} \leq \sup_{\tilde{\var0}\in\tilde{\F{U}}^k}\sup_{\domain\in\F M^k} \frac{\norm{\tilde{\var0}}_{L^\infty(\domain)}}{\norm{\tilde{\var0}}_{L^2(\domain)}}.
		\end{equation}
		However, due to the decomposition property \eqref{eq: decomposition property}, for each $\domain\in\F M^k$ and $\tilde{\var0}\in\tilde{\F{U}}^k$ there exists $\var0\in\tilde{\F{U}}^0$ such that
		\begin{equation}
			\norm{\var0}_{L^\infty(\domain_0)} = \norm{\tilde{\var0}}_{L^\infty(\domain)},\qquad  \norm{\var0}_{L^2(\domain_0)}^2 = \int_{\domain_0} |\var0(\vec x)|^2\diff\vec{x} = \int_{\domain_0} |\tilde{\var0}(\alpha\vec x+\vec\beta)|^2\diff\vec{x} = \op{det}(\alpha)^{-1}\norm{\tilde{\var0}}_{L^2(\domain)}^2.
		\end{equation}
		Combining these two equations with the assumed bound on $\frac{\norm{\var0}_{L^\infty(\domain_0)}}{\norm{\var0}_{L^2(\domain_0)}}$ confirms $\norm{\tilde{\var2}_k} \lesssim \sqrt{\op{det}(\alpha)^{-1}} \leq c_\alpha^{-\frac12}\meshsize^{-\frac{kd}{2}}$ as required.
		\qed\end{proof}
	
	\begin{lemma}\label{app:thm: generic a_E bound}
		Suppose $\F H$ and $(\tilde{\F U}^k)_{k\in\F N}$ satisfy Definition~\ref{def: finite element space} and $\var0^*$ is the minimiser of $E$ such that $\Norm{\tilde{\var2}_k-\var0^*}\lesssim \meshsize^{kp}$.
		\begin{enumerate}
			\item If $\op{E}$ is $\Norm\cdot$-Lipschitz at $\var0^*$, then
			$\op{E}(\tilde{\var2}_k)-\op{E}(\var0^*) \lesssim \meshsize^{kp}$  (i.e. $p'=p$).
			\item If $\nabla\op{E}$ is $\Norm\cdot$-Lipschitz at $\var0^*$, then 
			$\op{E}(\tilde{\var2}_k)-\op{E}(\var0^*) \lesssim \meshsize^{2kp}$  (i.e. $p'=2p$).
		\end{enumerate}
	\end{lemma}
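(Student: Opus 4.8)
The plan is to obtain both estimates directly from the approximation rate $\Norm{\tilde{\var2}_k-\var0^*}\lesssim\meshsize^{kp}$, using only the behaviour of $\op{E}$ near the minimiser $\var0^*$; no further use of the finite-element structure is required once the existence of such a sequence is known (Lemma~\ref{app:thm: specific a_U bound}).

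For part~1, $\Norm\cdot$-Lipschitz continuity of $\op{E}$ at $\var0^*$ gives constants $L,\delta>0$ with $|\op{E}(\var0)-\op{E}(\var0^*)|\leq L\Norm{\var0-\var0^*}$ whenever $\Norm{\var0-\var0^*}\leq\delta$. Since $\Norm{\tilde{\var2}_k-\var0^*}\to0$, every $\tilde{\var2}_k$ with $k$ beyond some index lies in this ball, hence $\op{E}(\tilde{\var2}_k)-\op{E}(\var0^*)\leq L\Norm{\tilde{\var2}_k-\var0^*}\lesssim\meshsize^{kp}$; as the $\lesssim$ notation only constrains large indices, the finitely many initial $k$ are irrelevant and we read off $p'=p$.

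For part~2, I would first record the first-order optimality condition. Because $\var0^*$ minimises $\op{E}$ over the \emph{linear} space $\F U$ and $\nabla\op{E}(\var0^*)$ exists ($\nabla\op{E}$ being Lipschitz near $\var0^*$), testing the variational inequality $\IP{\nabla\op{E}(\var0^*)}{\var0-\var0^*}\geq0$ against $\var0=\var0^*\pm w$, $w\in\F U$, forces $\IP{\nabla\op{E}(\var0^*)}{w}=0$ for all $w\in\F U$; in particular this pairing vanishes for $w=\tilde{\var2}_k-\var0^*$. Then the fundamental theorem of calculus along the segment $\var0^*+t(\tilde{\var2}_k-\var0^*)$, $t\in[0,1]$ (which lies in $\F U$), gives
\begin{equation*}
	\op{E}(\tilde{\var2}_k)-\op{E}(\var0^*)=\int_0^1\IP{\nabla\op{E}\bigl(\var0^*+t(\tilde{\var2}_k-\var0^*)\bigr)-\nabla\op{E}(\var0^*)}{\tilde{\var2}_k-\var0^*}\,\diff t ,
\end{equation*}
the $\nabla\op{E}(\var0^*)$ term having been subtracted for free by the previous observation. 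Estimating the integrand by $\Norm{\nabla\op{E}(\var0^*+t(\tilde{\var2}_k-\var0^*))-\nabla\op{E}(\var0^*)}_*\Norm{\tilde{\var2}_k-\var0^*}\leq Lt\Norm{\tilde{\var2}_k-\var0^*}^2$ (valid for $k$ large enough that the whole segment lies in the Lipschitz neighbourhood) and integrating in $t$ yields $\op{E}(\tilde{\var2}_k)-\op{E}(\var0^*)\leq\tfrac{L}{2}\Norm{\tilde{\var2}_k-\var0^*}^2\lesssim\meshsize^{2kp}$, so $p'=2p$. Both arguments are elementary; the only step needing care is the vanishing of $\IP{\nabla\op{E}(\var0^*)}{\tilde{\var2}_k-\var0^*}$, where one must exploit that $\F U$ is a linear space (so the optimality condition forces the pairing to be exactly zero rather than merely one-signed) and that all the ``Lipschitz at $\var0^*$'' hypotheses are only needed on a fixed neighbourhood of $\var0^*$, which $\lesssim$ absorbs for the finitely many small $k$.
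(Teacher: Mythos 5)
Your proof is correct and follows essentially the same route as the paper's: part~1 is the identical one-line Lipschitz estimate, and part~2 rests on the same two ingredients, namely that first-order optimality of $\var0^*$ over the linear space $\F U$ annihilates $\IP{\nabla\op{E}(\var0^*)}{\tilde{\var2}_k-\var0^*}$ and that Lipschitz continuity of $\nabla\op{E}$ then yields the quadratic bound. The only cosmetic difference is that the paper controls the remainder via the convexity inequality $\op{E}(\tilde{\var2}_k)-\op{E}(\var0^*)\leq\IP{\nabla\op{E}(\tilde{\var2}_k)}{\tilde{\var2}_k-\var0^*}$, needing the gradient only at the two endpoints, whereas you integrate along the segment, which gains a factor $\tfrac12$ in the constant and drops the explicit use of convexity but requires the gradient to exist and be controlled on the whole segment.
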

	\begin{proof}
		Both statements are direct by definition, observe
		\begin{gather}
			\op{E}(\tilde{\var2}_k)-\op{E}(\var0^*) \leq \op{Lip}(\op{E})\Norm{\tilde{\var2}_k-\var0^*},
			\\\op{E}(\tilde{\var2}_k)-\op{E}(\var0^*) \leq \IP{\nabla\op{E}(\var2)}{\tilde{\var2}_k-\var0^*} = \IP{\nabla \op{E}(\tilde{\var2}_k)-\nabla \op{E}(\var0^*)}{ \tilde{\var2}_k-\var0^*} \leq \op{Lip}(\nabla\op{E})\Norm{\tilde{\var2}_k-\var0^*}^2.
		\end{gather}
		The proof is concluded by using the approximation bounds of $\tilde{\var2}_k$ in Definition~\ref{def: finite element space}.
		\qed\end{proof}

	\section{Operator norms for numerical examples}
	\begin{theorem}\label{thm: norm bound examples}
		Suppose $\A\colon \F H \to \R^m$ has kernels $\psi_j\in L^\infty([0,1]^d)$ for $j\in[m]$.
		\begin{enumerate}
			\item[Case 1:] If $\psi_j(\vec{x}) = \splitln{1}{\vec{x}\in \F{X}_j}{0}{\text{ else}}$ for some collection $\F{X}_j\subset\Domain$ such that $\F{X}_i\cap \F{X}_j = \emptyset$ for all $i\neq j$, then 
			$\norm{\A}_{L^2\to\ell^2} = \max_{j\in[m]} \sqrt{|\F{X}_j|}.$
			
			\item[Case 2:] If $\psi_j(\vec{x}) = \cos(\ip{\vvars3_j}{\vec{x}})$ for some frequencies $\vvars3_j\in\R^d$ with $|\vvars3_j|\leq A$, then
			$$\norm{\A}_{L^2\to\ell^2} \leq \sqrt{m}, \qquad |\A^*\vec{r}|_{C^k}\leq m^{1-\frac1q}A^k\norm{\vec{r}}_q, \quad\text{and}\quad |\A^*|_{\ell^2\to C^k}\leq \sqrt{m}A^k$$
			for all $\vec{r}\in\R^m$ and $q\in[1,\infty]$.
			
			\item[Case 3:] Suppose $\psi_j(\vec{x}) = (2\pi\sigma^2)^{-\frac{d}{2}}\exp\left(-\frac{|\vec{x}-\vec{x}_j|^2}{2\sigma^2}\right)$ for some regular mesh $\vec{x}_j\in[0,1]^d$ and separation $\Delta$. i.e. 
			$$\{\vec{x}_j\st j\in[m]\} = \{\vec{x}_0 + (j_1\Delta,\ldots,j_d\Delta)\st j_i\in[\hat m]\}$$
			for some $\vec{x}_0\in\R^d$, $\hat m\coloneqq\sqrt[d]{m}$. For all $\frac1q + \frac{1}{q^*} = 1$, $q\in(1,\infty]$, we have
			\begin{align}
				\norm{\A}_{L^2\to\ell^2} &\leq \bigg((4\pi\sigma^2)^{-\frac 12}\sum_{j=-2\hat m,\ldots,2\hat m}\exp(-\tfrac{\Delta^2}{4\sigma^2}j^2)\bigg)^d,
				\\ |\A^*\vec{r}|_{C^0} &\leq (2\pi\sigma^2)^{-\frac{d}{2}}\bigg(\sum_{\vec j\in J}\exp\left(-\tfrac{q^*\Delta^2}{2\sigma^2}\max(0,|\vec j|-\delta)^2\right)\bigg)^{\frac1{q^*}}\norm{\vec{r}}_q,
				\\ |\A^*\vec{r}|_{C^1} &\leq \frac{(2\pi\sigma^2)^{-\frac{d}{2}}}{\sigma}\frac{\Delta}{\sigma}\bigg(\sum_{\vec j\in J}(|\vec j|+\delta)^{q^*}\exp\left(-\tfrac{q^*\Delta^2}{2\sigma^2}\max(0,|\vec j|-\delta)^2\right)\bigg)^{\frac1{q^*}}\norm{\vec{r}}_q,
				\\ |\A^*\vec{r}|_{C^2} &\leq \frac{(2\pi\sigma^2)^{-\frac{d}{2}}}{\sigma^2} \bigg(\sum_{\vec j\in J} \left(1+\tfrac{\Delta^2}{\sigma^2}(|\vec j|+\delta)^2\right)^{q^*}\exp\left(-\tfrac{q^*\Delta^2}{2\sigma^2}\max(0,|\vec j|-\delta)^2\right)\bigg)^{\frac1{q^*}} \norm{\vec{r}}_q,
			\end{align}
			where $\delta= \frac{\sqrt d}{2}$ and $J=\{\vec j\in\F Z^d \st \norm{\vec j}_{\ell^\infty}\leq 2\hat m\}$. The case for $q=1$ can be inferred from the standard limit of $\norm\cdot_{{q^*}}\to \norm\cdot_{\infty}$ for $q^*\to\infty$.
		\end{enumerate}
	\end{theorem}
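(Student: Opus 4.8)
The plan is to reduce the entire statement to two tools already available: Lemma~\ref{thm: norm bound L2}, which identifies $\norm{\A}_{L^2\to\ell^2}^2=\norm{\A\A^*}$ with the spectral norm of the finite Gram matrix $(\A\A^*)_{i,j}=\IP{\psi_i}{\psi_j}$, and Lemma~\ref{thm: norm bound smoothness}, which controls $|\A^*|_{C^k}$ and $|\A^*|_{\ell^2\to C^k}$ through $\sup_{\vec x\in\Domain}\norm{(\nabla^k\psi_j(\vec x))_{j}}_{\ell^{q^*}}$. All that then remains is a kernel-by-kernel computation. In Case~1, disjointness of the $\F X_j$ makes the Gram matrix diagonal with entries $|\F X_j|$, so its spectral norm is $\max_j|\F X_j|$ and $\norm{\A}_{L^2\to\ell^2}=\max_j\sqrt{|\F X_j|}$. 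In Case~2, $|\cos|\le 1$ on a domain of unit measure gives $\norm{\psi_j}\le1$, hence $\norm{\A^*\vec r}\le\sum_j|r_j|\le\sqrt m\,\norm{\vec r}_{\ell^2}$ and $\norm{\A}_{L^2\to\ell^2}\le\sqrt m$; differentiating $\psi_j(\vec x)=\cos(\IP{\vvars3_j}{\vec x})$ shows $\nabla^k\psi_j(\vec x)$ is $\pm(\cos$ or $\sin)(\IP{\vvars3_j}{\vec x})\,\vvars3_j^{\otimes k}$, so $|\nabla^k\psi_j(\vec x)|\le|\vvars3_j|^k\le A^k$ uniformly, giving $\sup_{\vec x}\norm{(\nabla^k\psi_j(\vec x))_{j}}_{\ell^{q^*}}\le m^{1/q^*}A^k=m^{1-1/q}A^k$, and Lemma~\ref{thm: norm bound smoothness} (with $q$ arbitrary, and $q=2$ for the $\ell^2\to C^k$ bound) finishes the case.

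Case~3 carries the real work. For $\norm{\A}_{L^2\to\ell^2}$ I would bound $\IP{\psi_i}{\psi_j}$ by extending the integral from $[0,1]^d$ to $\R^d$ (legitimate, the integrand being nonnegative) and completing the square, which yields $\IP{\psi_i}{\psi_j}\le(4\pi\sigma^2)^{-d/2}\exp(-|\vec x_i-\vec x_j|^2/4\sigma^2)$; a Schur-type bound (spectral norm of a symmetric matrix $\le$ its largest absolute row sum), together with the coordinatewise factorisation $|\vec x_i-\vec x_j|^2=\Delta^2\sum_{l=1}^d(i_l-j_l)^2$ and relaxing each one-dimensional index range to $\{-2\hat m,\dots,2\hat m\}$, then gives the stated product bound. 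For the seminorms, for $\vec x\in[0,1]^d$ I would write $\vec x_j-\vec x=\Delta(\vec j'+\vec s)$ with $\vec j'\in\F Z^d$, $\vec s\in[-\tfrac12,\tfrac12]^d$ (so $|\vec s|\le\delta=\tfrac{\sqrt d}2$ and, over the unit cube, $\vec j'$ lies in $J=\{\vec j:\norm{\vec j}_{\ell^\infty}\le 2\hat m\}$); differentiating $g(\vec y)=e^{-|\vec y|^2/2}$ (with $\nabla g=-\vec y\,g$, $\nabla^2g=(\vec y\vec y^{\top}-I)g$) produces pointwise bounds on $|\psi_j(\vec x)|$, $|\nabla\psi_j(\vec x)|$, $|\nabla^2\psi_j(\vec x)|$ which, up to the constant $(2\pi\sigma^2)^{-d/2}$ and powers of $\sigma^{-1},\Delta$, are $1$, $|\vec j'+\vec s|$, and $1+\tfrac{\Delta^2}{\sigma^2}|\vec j'+\vec s|^2$, each times $e^{-\frac{\Delta^2}{2\sigma^2}|\vec j'+\vec s|^2}$. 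Using $\max(0,|\vec j'|-\delta)\le|\vec j'+\vec s|\le|\vec j'|+\delta$ to bound the Gaussian factor above and the polynomial weights above, taking $\sup_{\vec x}$, and inserting into Lemma~\ref{thm: norm bound smoothness} reproduces the four displayed inequalities; the $q=1$ endpoint follows by sending $q^*\to\infty$.

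The routine parts are Cases~1 and~2, which are essentially one-line consequences of the two lemmas. The main obstacle is the Case~3 bookkeeping: keeping the Gaussian normalising constants straight when the Gram integral is passed to $\R^d$, and --- more delicately --- performing the shift $\vec x_j-\vec x=\Delta(\vec j'+\vec s)$ uniformly in $\vec x\in[0,1]^d$ so that the one-sided estimate with offset $\delta$ is applied consistently to both the exponential and the derivative-induced polynomial weights, while also checking that the deliberately generous range $\norm{\vec j'}_{\ell^\infty}\le 2\hat m$ genuinely captures every contributing grid point.
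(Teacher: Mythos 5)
Your proposal is correct and follows essentially the same route as the paper: Cases 1 and 2 are read off from Lemmas~\ref{thm: norm bound L2} and~\ref{thm: norm bound smoothness} exactly as you describe, and for Case 3 the paper likewise completes the square in the Gram integral, bounds the spectral norm by the largest row sum with the coordinatewise factorisation, and handles the seminorms by the same pointwise derivative bounds combined with the integer-shift/offset-$\delta$ trick (isolated in the paper as Lemma~\ref{app: exp sum bound}) over the enlarged index range. The only cosmetic difference is that you obtain $\norm{\A}\leq\sqrt m$ in Case 2 by a direct triangle/Cauchy--Schwarz bound on $\A^*$ rather than via $\norm{\A\A^*}_{\ell^\infty\to\ell^\infty}\leq m$; both are one-line arguments giving the same constant.
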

	\begin{proof}[Case 1.]
		From Lemma~\ref{thm: norm bound L2} we have 
		\begin{equation}
			(\A\A^*)_{i,j} = \IP{\1_{\F{X}_i}}{\1_{\F{X}_j}} = |\F{X}_i\cap \F{X}_j| = \splitln{|\F{X}_i|}{i=j}{0}{i\neq j}.
		\end{equation}
		Therefore, $\A\A^*$ is a diagonal matrix and $\norm{\A\A^*}_{\ell^2\to\ell^2} = \max_{j\in[m]} |\F{X}_j|$ completes the result.
		\qed\end{proof}
	
	\begin{proof}[Case 2.]
		$\psi_j$ are not necessarily orthogonal however $|\IP{\psi_i}{\psi_j}|\leq 1$ therefore we can estimate
		\begin{equation}
			\norm{\A\A^*}_{\ell^2\to\ell^2} \leq \norm{\A\A^*}_{\ell^\infty\to\ell^\infty} \leq m.
		\end{equation}
		Now looking to apply Lemma~\ref{thm: norm bound smoothness}, note $\norm{\nabla^k\psi_j}_\infty \leq A^k$, therefore
		\begin{equation}
			|\A^*\vec{r}|_{C^k}\leq A^k m^{\frac1{q^*}}\norm{\vec{r}}_q = A^k m^{1-\frac1{q}}\norm{\vec{r}}_q
			\quad\text{and}\quad |\A^*|_{\ell^2\to C^k} \leq A^k \min_{q\in[1,\infty]}m^{1-\frac1q}\sqrt{m}^{\max(0,2-q)} = \sqrt{m}A^k.
		\end{equation}
		\qed\end{proof}
	
	\begin{proof}[Case 3.]
		In the Gaussian case, we build our approximations around the idea that sums of Gaussians should converge very quickly. The first example can be used to approximate the operator norm. Computing the inner products gives
		\begin{equation}
			\IP{\psi_i}{\psi_j} = (2\pi\sigma^2)^{-d}\int_{[0,1]^d} \exp\left(-\tfrac{|\vec{x}-\vec{x}_i|^2}{2\sigma^2}-\tfrac{|\vec{x}-\vec{x}_j|^2}{2\sigma^2}\right)\diff\vec{x} \leq (2\pi\sigma^2)^{-d}(\pi\sigma^2)^{\frac{d}{2}}\exp\left(-\tfrac{|\vec{x}_i-\vec{x}_j|^2}{4\sigma^2}\right).
		\end{equation}
		Estimating the operator norm,
		\begin{align}
			\norm{\A\A^*}_{\ell^2\to\ell^2}&\leq \norm{\A\A^*}_{\ell^\infty\to\ell^\infty} = \max_{i\in[m]} \sum_{j=1}^m |\IP{\psi_i}{\psi_j}| 
			\\&= \max_{i\in[m]}(4\pi\sigma^2)^{-\frac d2}\sum_{j_1,\ldots,j_d\in[\hat m]} \exp\left(-\frac{(j_1\Delta-i_1\Delta)^2+\ldots+(j_d\Delta-i_d\Delta)^2}{4\sigma^2}\right)
			\\&\leq (4\pi\sigma^2)^{-\frac d2}\sum_{\vec j\in\F Z^d\cap[-\hat m,\hat m]^d} \exp\left(-\frac{(j_1\Delta)^2+\ldots+(j_d\Delta)^2}{4\sigma^2}\right)
			= \left[(4\pi\sigma^2)^{-\frac 12}\sum_{j=-\hat m}^{\hat m} \exp\left(-\frac{\Delta^2j^2}{4\sigma^2}\right)\right]^d.
		\end{align}
		This is a nice approximation because it factorises simply over dimensions.
		Applying the results from Lemma~\ref{thm: norm bound smoothness}, note
		$$\begin{array}{rll}
			\displaystyle |\psi_j(\vec{x})| &\displaystyle= \left|\psi_j(\vec{x})\right| &\displaystyle = (2\pi\sigma^2)^{-\frac d2}\exp\left(-\frac{|\vec{x}-\vec{x}_j|^2}{2\sigma^2}\right),
			\\\displaystyle |\nabla\psi_j(\vec{x})| &\displaystyle= \left|\frac{\vec{x}-\vec{x}_j}{\sigma^2}\psi_j(\vec{x})\right| &\displaystyle = \frac{(2\pi\sigma^2)^{-\frac d2}}{\sigma}\frac{|\vec{x}-\vec{x}_j|}{\sigma}\exp\left(-\frac{|\vec{x}-\vec{x}_j|^2}{2\sigma^2}\right),
			\\|\nabla^2\psi_j(\vec{x})| &\displaystyle =\left|\frac1{\sigma^2} + \frac{(\vec{x}-\vec{x}_j)(\vec{x}-\vec{x}_j)^\top }{\sigma^4}\right|\psi_j(\vec{x}) &\displaystyle = \frac{(2\pi\sigma^2)^{-\frac d2}}{\sigma^2}\left(1+\frac{|\vec{x}-\vec{x}_j|^2}{\sigma^2}\right)\exp\left(-\frac{|\vec{x}-\vec{x}_j|^2}{2\sigma^2}\right).
		\end{array}$$
		We now wish to sum over $j=1,\ldots,m$ and produce an upper bound on these, independent of $t$. To do so we will use the following lemma.
		
		\begin{lemma}\label{app: exp sum bound}
			Suppose $q>0$. If the polynomial $p(|\vec{x}|) = \sum p_k|\vec{x}|^k$ has non-negative coefficients and $\vec{x}\in[-m,m]^d$, then
			$$\sum_{\norm{\vec{j}}_{\ell^\infty}\leq m} p(|\vec{j}-\vec{x}|)\exp\left(-\tfrac{q|\vec{j}-\vec{x}|^2}{2}\right)\leq \sum_{\norm{\vec{j}}_{\ell^\infty}\leq2m}p(|\vec{j}|+\delta)\exp\left(-\frac{q\max(0,|\vec{j}|-\delta)^2}{2}\right)$$
			where $\delta\coloneqq \frac{\sqrt{d}}{2}$ and $\vec{j}\in\F Z^d$.
		\end{lemma}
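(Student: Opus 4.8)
The plan is a direct rounding-and-reindexing argument whose point is to make the right-hand side independent of $\vec x$. First, fix $\vec x\in[-m,m]^d$ and let $\vec z\in\F Z^d$ be the lattice point obtained by rounding each coordinate of $\vec x$ to a nearest integer. Then every coordinate of $\vec x-\vec z$ has modulus at most $\tfrac12$, so $|\vec x-\vec z|\leq\tfrac{\sqrt d}{2}=\delta$, and $\norm{\vec z}_{\ell^\infty}\leq m$ because $\norm{\vec x}_{\ell^\infty}\leq m$.

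Second, I would estimate each summand against a quantity depending only on $\vec j-\vec z$. For any $\vec j\in\F Z^d$ the triangle inequality gives both $|\vec j-\vec x|\leq|\vec j-\vec z|+\delta$ and $|\vec j-\vec x|\geq|\vec j-\vec z|-\delta$; combining the latter with the trivial bound $|\vec j-\vec x|\geq0$ yields $|\vec j-\vec x|\geq\max(0,|\vec j-\vec z|-\delta)$. Since $p$ has non-negative coefficients it is nondecreasing on $[0,\infty)$, hence $p(|\vec j-\vec x|)\leq p(|\vec j-\vec z|+\delta)$; and since $t\mapsto e^{-qt^2/2}$ is nonincreasing on $[0,\infty)$, $\exp(-\tfrac q2|\vec j-\vec x|^2)\leq\exp(-\tfrac q2\max(0,|\vec j-\vec z|-\delta)^2)$. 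Multiplying these two inequalities (all factors are non-negative) bounds the $\vec j$-th summand by $p(|\vec j-\vec z|+\delta)\exp(-\tfrac q2\max(0,|\vec j-\vec z|-\delta)^2)$.

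Third, reindex by $\vec i=\vec j-\vec z$. As $\vec j$ runs over the lattice points with $\norm{\vec j}_{\ell^\infty}\leq m$ and $\norm{\vec z}_{\ell^\infty}\leq m$, the shifted index satisfies $\norm{\vec i}_{\ell^\infty}\leq 2m$, so the image of the injective shift is contained in $\{\vec i\in\F Z^d:\norm{\vec i}_{\ell^\infty}\leq 2m\}$; since every term is non-negative, enlarging the summation range to this full set only increases the sum. Collecting the three steps yields exactly the asserted inequality, with a right-hand side that no longer depends on $\vec x$ (so it can be pulled out of the supremum over $\vec x$ in the application to $|\A^*\vec r|_{C^k}$).

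There is no genuine obstacle; the only points requiring care are the two one-sided triangle estimates that produce the asymmetry between the $+\delta$ appearing inside the polynomial and the $\max(0,\,\cdot-\delta)$ appearing inside the Gaussian, and the verification that the reindexed summation range lies in $\norm{\vec i}_{\ell^\infty}\leq 2m$, so that the monotonicity of $p$ and of $t\mapsto e^{-qt^2/2}$ may be invoked termwise without discarding any contributions.
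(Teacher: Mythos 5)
Your proposal is correct and follows essentially the same route as the paper: both round $\vec{x}$ to a nearest lattice point (the paper writes the offset as $\hat{\vec{x}}\in[-\tfrac12,\tfrac12]^d$ with $\vec{x}+\hat{\vec{x}}\in\F Z^d$), use $|\hat{\vec x}|\leq\delta$ with the two one-sided triangle estimates and the monotonicity of $p$ and of $t\mapsto e^{-qt^2/2}$, and reindex into the enlarged range $\norm{\vec j}_{\ell^\infty}\leq 2m$. The only difference is cosmetic --- you bound the summands before shifting the index while the paper shifts first --- so there is nothing further to add.
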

		\begin{proof}
			There exists $\hat{\vec{x}}\in[-\tfrac12,\tfrac12]^d$ such that $\vec{x} + \hat{\vec{x}}\in\F Z^d$, therefore
			\begin{align*}
				\sum_{\norm{\vec{j}}_{\ell^\infty}\leq m} p(|\vec{j}-\vec{x}|)\exp\left(-\tfrac{q|\vec{j}-\vec{x}|^2}{2}\right) &= \sum_{\norm{\vec{j}}_{\ell^\infty}\leq m} p(|\vec{j}-(\vec{x}+\hat{\vec{x}})+\hat{\vec{x}}|)\exp\left(-\tfrac{q|\vec{j}-(\vec{x}+\hat{\vec{x}})+\hat{\vec{x}}|^2}{2}\right)
				\\&\leq \sum_{\norm{\vec{j}}_{\ell^\infty}\leq2m} p(|\vec{j}+\hat{\vec{x}}|)\exp\left(-\tfrac{q|\vec{j}+\hat{\vec{x}}|^2}{2}\right)
				\\&\leq \sum_{\substack{\vec{j}\in\F Z^d\\\norm{\vec{j}}_{\ell^\infty}\leq2m}} p(|\vec{j}|+\delta)\exp\left(-\tfrac{q\max(0,|\vec{j}|-\delta)^2}{2}\right)
			\end{align*}
			as $|\hat{\vec{x}}|\leq \delta$ and $p$ has non-negative coefficients. 
			\qed\end{proof}
		
		Now, continuing the proof of Theorem~\ref{thm: norm bound examples}, for $\hat m=\sqrt[d]{m}$, $\delta=\frac{\sqrt{d}}{2}$ and $J=\{\vec{j}\in\F Z^d \st \norm{\vec{j}}_{\ell^\infty}\leq 2\hat m\}$, Lemma~\ref{app: exp sum bound} bounds 
		\begin{align*}
			\sum_{j=1}^m |\psi_j(\vec{x})|^{q^*} &\leq (2\pi\sigma^2)^{-\frac{dq^*}{2}} \left[\sum_{\vec{j}\in J} \exp\left(-\frac{q^*\Delta^2}{2\sigma^2}\max(0,|\vec{j}|-\delta)^2\right)\right]
			\\\sum_{j=1}^m |\nabla\psi_j(\vec{x})|^{q^*} &\leq \frac{(2\pi\sigma^2)^{-\frac{dq^*}{2}}}{\sigma^{q^*}}\frac{\Delta^{q^*}}{\sigma^{q^*}} \left[\sum_{\vec{j}\in J} (|\vec{j}|+\delta)^{q^*}\exp\left(-\frac{q^*\Delta^2}{2\sigma^2}\max(0,|\vec{j}|-\delta)^2\right)\right]
			\\\sum_{j=1}^m |\nabla^2\psi_j(\vec{x})|^{q^*} &\leq \frac{(2\pi\sigma^2)^{-\frac{dq^*}{2}}}{\sigma^{2q^*}} \left[\sum_{\vec{j}\in J} \left(1+\frac{\Delta^2}{\sigma^2}(|\vec{j}|+\delta)^2\right)^{q^*}\exp\left(-\frac{q^*\Delta^2}{2\sigma^2}\max(0,|\vec{j}|-\delta)^2\right)\right]
		\end{align*}
		for all $\vec{x}\in\Domain$. In a worst case, this is $O(2^dm)$ time complexity however the summands all decay faster than exponentially and so should converge very quickly.	
		\qed\end{proof}

\end{document}